\definecolor{linkred}{rgb}{0.6,0,0}
\definecolor{linkblue}{rgb}{0,0,0.6}
\newtheorem{theorem}{Theorem}[section]
\newtheorem{lemma}[theorem]{Lemma}
\newtheorem{proposition}[theorem]{Proposition}
\newtheorem{corollary}[theorem]{Corollary} 
\theoremstyle{definition}
\newtheorem{definition}[theorem]{Definition}
\newtheorem{example}[theorem]{Example}
\theoremstyle{remark}
\newtheorem{remark}[theorem]{Remark}
\newtheorem{remarks}[theorem]{Remarks}
\newcommand{\R}{\mathbb{R}}
\newcommand{\Rbar}{\overline{\mathbb{R}}}
\newcommand{\ZZ}{\mathbb{Z}}
\newcommand{\Q}{\mathbb{Q}}
\newcommand{\N}{\mathbb{N}}
\newcommand{\PP}{\mathbb{P}}
\newcommand{\QQ}{\mathbb{Q}}
\newcommand{\G}{\mathbb{G}}
\newcommand{\Sigmabar}{\overline{\Sigma}}
\newcommand{\sigmabar}{\overline{\sigma}}
\newcommand{\Mbar}{\overline{M}}
\newcommand{\calA}{\mathcal{A}}
\newcommand{\calB}{\mathcal{B}}
\newcommand{\calC}{\mathcal{C}}
\newcommand{\calE}{\mathcal{E}}
\newcommand{\calG}{\mathcal{G}}
\newcommand{\calH}{\mathcal{H}}
\newcommand{\calM}{\mathcal{M}}
\newcommand{\calO}{\mathcal{O}}
\newcommand{\calS}{\mathcal{S}}
\newcommand{\calX}{\mathcal{X}}
\newcommand{\calMbar}{\overline{\mathcal{M}}}
\newcommand{\frakS}{\mathfrak{S}}
\newcommand{\frako}{\mathfrak{o}}
\newcommand{\scrD}{\mathscr{D}}
\newcommand{\scrW}{\mathscr{W}}
\DeclareMathOperator{\Spec}{Spec}
\DeclareMathOperator{\Hom}{Hom}
\DeclareMathOperator{\codim}{codim}
\DeclareMathOperator{\Aut}{Aut}
\DeclareMathOperator{\Isom}{Isom}
\DeclareMathOperator{\trop}{trop}
\DeclareMathOperator{\val}{val}
\DeclareMathOperator{\characteristic}{char}
\DeclareMathOperator{\Trop}{Trop}
\title[Tropical geometry of moduli spaces]{Tropical geometry of moduli spaces of weighted stable curves}
\author{Martin Ulirsch}
\address{Department of Mathematics, Brown University, Providence, RI 02912, USA}
\email{\href{mailto:ulirsch@math.brown.edu}{ulirsch@math.brown.edu}}
\urladdr{\href{http://www.math.brown.edu/~ulirsch/index.html}{http://www.math.brown.edu/~ulirsch/index.html}}
\subjclass[2010]{14T05; 14D15; 32P05}
\date{\today}
\thanks{The author's research was supported in part by funds from BSF grant 201025 and NSF grants DMS0901278 and DMS1162367.} 
\begin{document}

\maketitle

\begin{abstract} 
Hassett's moduli spaces of weighted stable curves form an important class of alternate modular compactifications of the moduli space of smooth curves with marked points. In this article we define a tropical analogue of these moduli spaces and show that the naive set-theoretic tropicalization map can be identified with a natural deformation retraction onto the non-Archimedean skeleton. This result generalizes work of Abramovich, Caporaso, and Payne treating the Deligne-Knudsen-Mumford compactification of the moduli space of smooth curves with marked points. We also study tropical analogues of the tautological maps, investigate the dependence of the tropical moduli spaces on the weight data, and consider the example of Losev-Manin spaces.
\end{abstract}

\setcounter{tocdepth}{1}

\tableofcontents


\section{Introduction}

Throughout the article we work over an algebraically closed field $k$ that is endowed with the trivial norm. In \cite{Hassett_weightedmoduli} Hassett introduces a class of modular compactifications $\calMbar_{g,\calA}$ of the moduli space $\calM_{g,n}$ of smooth curves with $n$ marked points parametrized by an \emph{input datum} $(g,\calA)$ consisting of a non-negative integer $g$ together with a collection $\calA=(a_1,\ldots,a_n)$ of \emph{weights} $a_i\in\QQ\cap(0,1]$ such that 
\begin{equation*}
2g-2+a_1+\dots+a_n>0 \ . 
\end{equation*}

The moduli space $\calMbar_{g,\calA}$  parametrizes curves $(C,p_1,\ldots,p_n)$ with $n$ marked non-singular points on $C$ that are \emph{stable of type $(g,\calA)$}, i.e. nodal curves $(C,p_1,\ldots,p_n)$ with $n$ marked non-singular points that fulfill the following two conditions:
\begin{enumerate}
\item\label{item_amplecanonical} The twisted canonical divisor $K_C+a_1p_1+\ldots+a_np_n$ is ample.
\item A subset $p_{i_1},\ldots, p_{i_k}$ of the marked points is allowed to coincide only if the inequality $a_{i_1}+\ldots+a_{i_k}\leq 1$ holds. 
\end{enumerate}

In the case $(a_1,\ldots,a_n)=(1,\ldots,1)$ this condition is nothing but the traditional notion of an $n$-marked stable curve and so the compactification $\calMbar_{g,\calA}$ is exactly the well-known Deligne-Knudsen-Mumford compactification $\calMbar_{g,n}$ of $\calM_{g,n}$ introduced in \cite{DeligneMumford_moduliofcurves} and \cite{Knudsen_projectivityII}. 
 
In \cite[Theorem 2.1]{Hassett_weightedmoduli} Hassett shows that the moduli spaces $\calMbar_{g,\calA}$ are connected Deligne-Mumford stacks that are proper and smooth over $\Spec \ZZ$ and whose coarse moduli spaces $\Mbar_{g,\calA}$ are projective over $\Spec\ZZ$. Denote by $\calM_{g,\calA}$ the open locus of smooth curves in $\calMbar_{g,\calA}$. The following Theorem \ref{thm_SNC} is well-known to the experts, but, to the best of the author's knowledge, it has not appeared in the literature, so far. A discussion of its proof can be found in Section \ref{section_alternativeSNC}.

\begin{theorem}\label{thm_SNC}
The complement of $\calM_{g,\calA}$ in $\calMbar_{g,\calA}$ is a divisor with (stack-theoretically) normal crossings.
\end{theorem}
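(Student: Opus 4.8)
The plan is to reduce the statement to a local computation in the versal deformation space of a pointed nodal curve, using Hassett's theorem that $\calMbar_{g,\calA}$ is smooth over $\Spec\ZZ$ of relative dimension $3g-3+n$ (see \cite[Theorem 2.1]{Hassett_weightedmoduli} and the deformation-theoretic arguments in its proof; the dimension is forced since $\calM_{g,n}$ embeds as a dense open substack). By definition a geometric point $[C;p_1,\ldots,p_n]$ of $\calMbar_{g,\calA}$ fails to lie in $\calM_{g,\calA}$ exactly when the curve $C$ is singular, so the complement $\calMbar_{g,\calA}\setminus\calM_{g,\calA}$, with its reduced structure, is the locus of stable pointed curves of type $(g,\calA)$ whose underlying curve carries at least one node. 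Since being a normal crossings divisor is an étale-local property on a smooth stack, it suffices to show that at each such point the divisor becomes, after passing to a smooth presentation --- equivalently, in the complete local ring, equivalently in the miniversal deformation --- a union of coordinate hyperplanes.

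First I would recall the structure of the deformation theory of a pointed nodal curve $(C;p_1,\ldots,p_n)$ that is stable of type $(g,\calA)$; this is essentially contained in Hassett's proof of smoothness. Let $q_1,\ldots,q_\delta$ be the nodes of $C$. Since the marked points all lie on the smooth locus of $C$ (although several of them may coincide), the deformation functor of $(C;p_1,\ldots,p_n)$ admits a formally smooth morphism to the product $\prod_{j=1}^{\delta}\mathrm{Def}\!\big(\widehat{\calO}_{C,q_j}\big)$ of the local deformation functors of the nodes, and each factor is pro-represented by $k[[t_j]]$ with $\widehat{\calO}_{C,q_j}\cong k[[x,y]]/(xy-t_j)$; moreover the closed locus $\{t_j=0\}$ is precisely the locus over which the node $q_j$ is not smoothed. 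Because $\calMbar_{g,\calA}$ is smooth of dimension $3g-3+n$, the full deformation functor is pro-represented by a power series ring $k[[t_1,\ldots,t_\delta,s_1,\ldots,s_{3g-3+n-\delta}]]$ in which $t_1,\ldots,t_\delta$ extend to a regular system of parameters. In this local model the boundary is cut out by the single equation $t_1\cdots t_\delta=0$, hence is the union of the $\delta$ smooth coordinate hyperplanes $\{t_j=0\}$, which meet transversally. Running this over all geometric points of the complement and pulling the divisor back to a smooth atlas then shows that $\calMbar_{g,\calA}\setminus\calM_{g,\calA}$ is a divisor with (stack-theoretically) normal crossings.

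The main obstacle is the verification that the $\delta$ node-smoothing parameters really form part of a regular system of parameters of the smooth versal deformation --- i.e. that the $\delta$ smoothing directions are independent and that $\{t_j=0\}$ is exactly the persistence locus of $q_j$. This combines the local-to-global principle for deformations of nodal curves (the comparison map to the product of local node deformation functors is formally smooth, with one-dimensional smooth fibers over each node) with Hassett's smoothness and dimension count. One also has to check that the feature distinguishing Hassett's spaces from $\calMbar_{g,n}$, namely that marked points are allowed to collide, causes no difficulty: colliding marked points remain in the smooth locus of $C$, their deformations (moving points within the smooth locus) are unobstructed, and these directions are transverse to the node-smoothing directions, so the product decomposition of the deformation functor --- and with it the coordinate-hyperplane description of the boundary --- is unaffected.
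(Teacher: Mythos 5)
Your argument is correct, but it takes a genuinely different route from the proof the paper presents. You work pointwise with the deformation theory of the weighted stable curve itself: the local-to-global principle gives a formally smooth map from the deformation functor of $(C;p_1,\ldots,p_n)$ to the product $\prod_{j}\mathrm{Def}\big(\widehat{\calO}_{C,q_j}\big)$ of the one-dimensional local deformation functors of the nodes, and combining this with Hassett's smoothness and the dimension count $3g-3+n$ shows that the node-smoothing parameters $t_1,\ldots,t_\delta$ extend to a regular system of parameters with the boundary cut out by $t_1\cdots t_\delta=0$. This is essentially the approach the paper only alludes to (a detailed analysis of formal deformation spaces, carried out in an earlier version of the article). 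The proof actually given is shorter and shifts the work elsewhere: it identifies $\calMbar_{g,\calA}$ with an open substack of the $n$-fold fibered product $\calC_g^{sm}\times_{\calS_g}\cdots\times_{\calS_g}\calC_g^{sm}$ over Olsson's stack $\calS_g$ of all nodal curves (using that $\calA$-stability is an open condition), deduces that the forgetful morphism $\calMbar_{g,\calA}\rightarrow\calS_g$ is smooth, and then pulls back formal coordinates on $\calS_g$ in which the node-persistence loci are already known to be coordinate hyperplanes. That route outsources precisely the step you flag as the main obstacle --- the independence of the smoothing directions and the harmlessness of colliding marked points --- to a single citation plus the smoothness of the forgetful map, since the marked points enter only through the smooth fibered product; your version is self-contained modulo standard deformation theory of nodal curves and makes the role of the (possibly colliding) marked points explicit. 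Both arguments arrive at the same local model, namely the coordinate description of Theorem \ref{thm_nodecoordinates}.
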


So the open immersion $\calM_{g,\calA}\hookrightarrow \calMbar_{g,\calA}$ has the structure of a toroidal embedding  (see \cite{KKMSD_toroidal}). By the work of \cite{Thuillier_toroidal} and \cite{AbramovichCaporasoPayne_tropicalmoduli},  associated to this datum there is a natural strong deformation retraction $\mathbf{p}$ from the non-Archimedean analytic space $\Mbar_{g,\calA}^{an}$ associated to the coarse moduli space $\Mbar_{g,\calA}$ onto a closed subset $\mathfrak{S}(\calMbar_{g,\calA})$ of $\Mbar_{g,\calA}^{an}$, called the  \emph{skeleton} of $\calMbar_{g,\calA}$, and this skeleton naturally carries the structure of an extended generalized cone complex in the sense of \cite[Section 2]{AbramovichCaporasoPayne_tropicalmoduli}.

In this article we define a notion of \emph{stability of type $(g,\calA)$} for tropical curves $\Gamma$ by imitating condition \eqref{item_amplecanonical}. Moreover we construct a set-theoretic moduli space $M_{g,\calA}^{trop}$ parametrizing isomorphism classes of tropical curves that are stable of type $(g,\calA)$. Its natural extension $\Mbar_{g,\calA}^{trop}$ admits an interpretation as a set-theoretic moduli space of extended tropical curves that are stable of type $(g,\calA)$. The tropical moduli space $\Mbar_{g,\calA}^{trop}$ naturally carries the structure of a generalized extended cone complex.

Moreover, following \cite{BakerPayneRabinoff_nonarchtrop}, \cite[Section 2.2.3]{Viviani_tropcompTorelli}, and \cite[Section 1.1]{AbramovichCaporasoPayne_tropicalmoduli}, there is a naive set-theoretic \emph{tropicalization map}
\begin{equation*}
\trop_{g,\calA}\mathrel{\mathop:}\Mbar_{g,\calA}^{an}\longrightarrow \Mbar_{g,\calA}^{trop}
\end{equation*}
from the non-Archimedean analytic space $\Mbar_{g,\calA}^{an}$ onto the extended tropical moduli space $\Mbar_{g,\calA}^{trop}$ defined as follows:

A point $x$ in $\Mbar_{g,\calA}^{an}$ can be represented by a morphism $\Spec K\rightarrow \calMbar_{g,\calA}$ for a non-Archimedean field extension $K$ of $k$. Since $\calMbar_{g,\calA}$ is a proper algebraic stack over $k$, the valuative criterion for properness implies that after a finite base change $K'\vert K$ this morphism extends uniquely to a morphism $\Spec R'\rightarrow\calMbar_{g,\calA}$, where $R'$ denotes the valuation ring of $K'$. This datum is equivalent to a curve $\calC\rightarrow \Spec R'$ that is stable of type $(g,\calA)$. Denote by $G_x$ the weighted dual graph of the special fiber $\calC_s$ of $\calC$; it is stable of type $(g,\calA)$ by Proposition \ref{prop_Astability}. At a node $p_e$ of $\calC_s$ corresponding to an edge $e$ of $G_x$ the curve is defined by $xy=f_e$ in formal coordinates, where $f_e\in R'$. Endowing an edge $e$ with the length $l(e)=\val(f_e)$, where $\val$ denotes the valuation on $R'$, defines a tropical curve $\Gamma_x$ and we set
\begin{equation*}
\trop_{g,\calA}(x)=[\Gamma_x]\in\Mbar_{g,\calA}^{trop}. 
\end{equation*}

The main result of this article can now be stated as follows:

\newpage

\begin{theorem}\label{thm_modulartrop}
There is a natural isomorphism $J_{g,\calA}\mathrel{\mathop:}\Mbar_{g,\calA}^{trop}\xrightarrow{\sim}\frakS(\calMbar_{g,\calA})$ of extended generalized cone complexes such that the diagram
\begin{center}\begin{tikzpicture}
  \matrix (m) [matrix of math nodes,row sep=2em,column sep=3em,minimum width=2em]
  {  
  & \Mbar_{g,\calA}^{an} & \\ 
 \frakS(\calMbar_{g,\calA})  & & \Mbar_{g,\calA}^{trop}  \\ 
  };
  \path[-stealth]
    (m-1-2) edge node [above left] {$\mathbf{p}$} (m-2-1)
    		edge node [above right] {$\trop_{g,\calA}$} (m-2-3)
    (m-2-3) edge node [below] {$J_{g,\calA}$} node [above] {$\sim$} (m-2-1);		
\end{tikzpicture}\end{center}
commutes.
\end{theorem}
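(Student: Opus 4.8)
The plan is to follow the strategy of \cite{AbramovichCaporasoPayne_tropicalmoduli}, adapted to Hassett's weighted setting. Both sides of the asserted isomorphism are colimits of rational polyhedral cones. By Theorem \ref{thm_SNC} the open immersion $\calM_{g,\calA}\hookrightarrow\calMbar_{g,\calA}$ is a toroidal embedding of Deligne--Mumford stacks, so by \cite{Thuillier_toroidal} and \cite[Section 5]{AbramovichCaporasoPayne_tropicalmoduli} the skeleton $\frakS(\calMbar_{g,\calA})$ is the colimit of the diagram of extended cones indexed by the category of boundary strata of $\calMbar_{g,\calA}$, each stratum $Z$ contributing the cone $\Rbar_{\geq 0}^{E}$ (with $E$ the set of branches of the boundary divisor through $Z$) together with its monodromy group. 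On the other side, by its construction as an extended generalized cone complex $\Mbar_{g,\calA}^{trop}$ is the colimit $\varinjlim\Rbar_{\geq 0}^{E(G)}$ over the category $\mathbf{G}_{g,\calA}$ of weighted graphs stable of type $(g,\calA)$, with morphisms generated by automorphisms and edge contractions $G\rightsquigarrow G/S$. The isomorphism $J_{g,\calA}$ will come from an equivalence of these two indexing categories under which the two cone-valued functors correspond; the commutativity of the triangle then reduces to a local computation identifying $\mathbf{p}$ with the edge-length coordinates defining $\trop_{g,\calA}$.

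The geometric heart of the argument is the \'etale-local structure of $\calMbar_{g,\calA}$ along its boundary. Fix a weighted graph $G$ stable of type $(g,\calA)$. Because smoothings of nodes are unobstructed and independent, and because the boundary is normal crossings by Theorem \ref{thm_SNC}, the locus $\calMbar_{g,\calA,G}$ of curves with dual graph $G$ is smooth of codimension $|E(G)|$, and \'etale-locally near a point of it the stack $\calMbar_{g,\calA}$ is isomorphic to $\bigl[(\A^{E(G)}\times U)/\Aut(G)\bigr]$ with $U$ smooth, the $\A^{E(G)}$-coordinates the node-smoothing parameters, and $\Aut(G)$ acting on them through its action on $E(G)$; this is exactly the situation known for $\calMbar_{g,n}$, and the phenomenon of colliding marked points, being permitted already on \emph{smooth} curves, occurs only inside $\calM_{g,\calA}$ and hence is invisible to the skeleton. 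It follows that the cone of Thuillier's skeleton attached to $\calMbar_{g,\calA,G}$ is $\Rbar_{\geq 0}^{E(G)}$ with its $\Aut(G)$-action, and that the face maps of the skeleton --- induced by the inclusions of smaller strata into closures of larger ones --- are precisely the maps coming from edge contractions. Complementing this, one checks (this should already be part of the structure theory of $\calMbar_{g,\calA}$ recalled earlier, cf.\ Proposition \ref{prop_Astability}) that every graph in $\mathbf{G}_{g,\calA}$ is the dual graph of some $\calA$-stable curve and that $G\mapsto\calMbar_{g,\calA,G}$ induces an equivalence between $\mathbf{G}_{g,\calA}$ and the category of boundary strata.

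Given this dictionary, the functor $G\mapsto\Rbar_{\geq 0}^{E(G)}$ on $\mathbf{G}_{g,\calA}$ and the cone functor on the strata category correspond, so taking colimits over equivalent categories yields the canonical isomorphism $J_{g,\calA}\colon\Mbar_{g,\calA}^{trop}\xrightarrow{\sim}\frakS(\calMbar_{g,\calA})$ of extended generalized cone complexes. To verify $\mathbf{p}=J_{g,\calA}\circ\trop_{g,\calA}$, take $x\in\Mbar_{g,\calA}^{an}$, represented after a finite base change by an $\calA$-stable curve $\calC\to\Spec R'$ whose special fibre has dual graph $G=G_x$ and whose node at an edge $e$ has equation $xy=f_e$, $f_e\in R'$. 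The induced map $\Spec R'\to\calMbar_{g,\calA}$ carries the closed point into $\calMbar_{g,\calA,G}$, so in the local model above it is, up to the $\Aut(G)$-action, an $R'$-point of $\A^{E(G)}\times U$ whose $e$-th coordinate is the smoothing parameter $f_e$. By the defining property of the Thuillier retraction --- on a toric chart with cone $\Rbar_{\geq 0}^{E(G)}$ the point $\mathbf{p}(x)$ is the tuple of valuations of the coordinate functions, here the $f_e$ --- the point $\mathbf{p}(x)$ lies in $\Rbar_{\geq 0}^{E(G)}\subseteq\frakS(\calMbar_{g,\calA})$ with $e$-th coordinate $\val(f_e)$. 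That is exactly the length $l(e)$ of the edge $e$ of $\Gamma_x$, so $\mathbf{p}(x)=J_{g,\calA}([\Gamma_x])=J_{g,\calA}(\trop_{g,\calA}(x))$; the case $f_e=0$, an edge of infinite length, is handled identically in the face at infinity of $\Rbar_{\geq 0}^{E(G)}$ along $e$, using that $\mathbf{p}$ and $\trop_{g,\calA}$ are both continuous. Independence of the base change $K'\vert K$ is immediate, since valuations are preserved under field extensions.

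The main obstacle, I expect, is not the colimit formalism but the dictionary of the second paragraph: setting up the \'etale-local toric model of $\calMbar_{g,\calA}$ uniformly over the boundary with careful control of the $\Aut(G)$-actions, and proving that the category of boundary strata is equivalent to $\mathbf{G}_{g,\calA}$ --- in particular that the weighted stability condition on graphs is \emph{exactly} what characterizes the occurring dual graphs, and that edge contractions match the incidence of strata, including the bookkeeping of the leg-weights $\calA$. Once this weighted analogue of the combinatorial--geometric correspondence of \cite[Sections 3--6]{AbramovichCaporasoPayne_tropicalmoduli} is in place, both the construction of $J_{g,\calA}$ and the commutativity of the triangle follow as above.
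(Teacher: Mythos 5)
Your proposal is correct and follows essentially the same route as the paper: identify both sides as colimits over equivalent index categories (stable weighted graphs versus boundary strata with their monodromy groups, which is Lemma \ref{lemma_pi1=Aut} here, proved via the Galois cover $\widetilde{\calM}_G\to\calM_G$ of Proposition \ref{prop_MGstructure}), then check commutativity of the triangle pointwise using the node-smoothing coordinates of Theorem \ref{thm_nodecoordinates}. The one step you defer --- that the monodromy on the branches of the boundary is exactly $\Aut(G)$ --- is precisely the content of that lemma, and your sketched local model $\bigl[(\A^{E(G)}\times U)/\Aut(G)\bigr]$ is the right mechanism for it.
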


In the case of $\calA=(1,\ldots,1)$ Theorem \ref{thm_modulartrop} is the same as \cite[Theorem 1.2.1]{AbramovichCaporasoPayne_tropicalmoduli}. Its proof, an adaption of the one in \cite{AbramovichCaporasoPayne_tropicalmoduli}, can be found in Section \ref{section_proof}. It relies on a careful analysis of the stratfication of $\calMbar_{g,\calA}$ by dual graphs, which is undertaken in Section \ref{section_dualgraphs&boundary}. In the case of $\calMbar_{g,n}$ this analysis can be found in \cite[Section XII.10]{ArbarelloCornalbaGriffiths_moduliofcurves}. Similar proofs come up in \cite{CavalieriMarkwigRanganathan_admissiblecovers} and in \cite{Ranganathan_ratcurvesnonArch} treating moduli spaces of admissible covers of the projective line and of rational logarithmic stable maps into a toric variety respectively.

From Theorem \ref{thm_modulartrop} we immediately obtain the following:

\begin{corollary}\label{cor_tropwelldefcont}
The naive set-theoretic tropicalization map $\trop_{g,\calA}\mathrel{\mathop:}\Mbar_{g,\calA}^{an}\rightarrow \Mbar_{g,\calA}^{trop}$ is well-defined and continuous.
\end{corollary}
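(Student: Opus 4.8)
The plan is to deduce both assertions directly from Theorem \ref{thm_modulartrop}, without redoing any analysis. Recall that the definition of $\trop_{g,\calA}(x)$ given in the introduction involves two choices: a representative $\Spec K\to\calMbar_{g,\calA}$ of the point $x$, and a finite base change $K'\vert K$ over which this morphism extends to a morphism $\Spec R'\to\calMbar_{g,\calA}$. To see that $\trop_{g,\calA}$ is well-defined I would appeal to the argument in the proof of Theorem \ref{thm_modulartrop}, which computes the class $[\Gamma_x]$ for an arbitrary such choice and identifies it with $J_{g,\calA}^{-1}\bigl(\mathbf{p}(x)\bigr)$. Since the retraction $\mathbf{p}\colon\Mbar_{g,\calA}^{an}\to\frakS(\calMbar_{g,\calA})$ is a genuine, choice-independent continuous map (produced by the general theory of \cite{Thuillier_toroidal} and \cite{AbramovichCaporasoPayne_tropicalmoduli} applied to the toroidal embedding of Theorem \ref{thm_SNC}) and $J_{g,\calA}$ is a bijection, the element $J_{g,\calA}^{-1}\bigl(\mathbf{p}(x)\bigr)$ depends only on $x$. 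Hence $\trop_{g,\calA}$ is a well-defined set-theoretic map, and moreover $\trop_{g,\calA}=J_{g,\calA}^{-1}\circ\mathbf{p}$.

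For continuity I would simply use this factorization $\trop_{g,\calA}=J_{g,\calA}^{-1}\circ\mathbf{p}$. The map $\mathbf{p}$ is continuous by construction, being a strong deformation retraction of $\Mbar_{g,\calA}^{an}$, equipped with its Berkovich topology, onto the skeleton $\frakS(\calMbar_{g,\calA})$, equipped with the subspace topology; the latter coincides with the topology that $\frakS(\calMbar_{g,\calA})$ carries as an extended generalized cone complex. The inverse $J_{g,\calA}^{-1}$ of the isomorphism of extended generalized cone complexes furnished by Theorem \ref{thm_modulartrop} is in particular a homeomorphism, hence continuous, when $\Mbar_{g,\calA}^{trop}$ is given its own topology as a generalized extended cone complex. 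Therefore the composite $\trop_{g,\calA}$ is continuous.

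I do not expect any real obstacle, since all the substantive content is already packaged into Theorem \ref{thm_modulartrop} and into the general theory of skeletons of toroidal embeddings. The only point deserving a sentence of care is that the topology placed on $\Mbar_{g,\calA}^{trop}$ — namely the colimit topology coming from its presentation as a (generalized) extended cone complex — is precisely the one transported from $\frakS(\calMbar_{g,\calA})$ along $J_{g,\calA}$; but this is exactly what it means for $J_{g,\calA}$ to be an isomorphism of extended generalized cone complexes, so it requires no further argument.
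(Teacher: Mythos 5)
Your argument is correct and matches the paper, which derives this corollary immediately from Theorem \ref{thm_modulartrop} via the factorization $\trop_{g,\calA}=J_{g,\calA}^{-1}\circ\mathbf{p}$, with well-definedness coming from the choice-independence of $\mathbf{p}$ and continuity from $\mathbf{p}$ being continuous and $J_{g,\calA}$ being an isomorphism of extended generalized cone complexes.
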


The strong deformation retraction $\mathbf{p}$ restricts to a strong deformation retraction 
\begin{equation*}
M_{g,\calA}^{an}\longrightarrow\frakS(\calM_{g,\calA}) \ ,
\end{equation*}
 where $\frakS(\calM_{g,\calA})=M_{g,\calA}^{an}\cap\frak{S}(\calMbar_{g,\calA})$ is the \emph{non-Archimedean skeleton} of $\calM_{g,\calA}$. Theorem \ref{thm_modulartrop} therefore immediately implies the following:

\begin{corollary}\label{cor_modulartrop}
The isomorphism $J_{g,\calA}$ induces an isomorphism $M_{g,\calA}^{trop}\xrightarrow{\sim}\frakS(\calM_{g,\calA})$ of generalized cone complexes such that the diagram
\begin{center}\begin{tikzpicture}
  \matrix (m) [matrix of math nodes,row sep=2em,column sep=3em,minimum width=2em]
  {  
  & M_{g,\calA}^{an} & \\ 
 \frakS(\calM_{g,\calA})  & & M_{g,\calA}^{trop}  \\ 
  };
  \path[-stealth]
    (m-1-2) edge node [above left] {$\mathbf{p}$} (m-2-1)
    		edge node [above right] {$\trop_{g,\calA}$} (m-2-3)
    (m-2-3) edge node [below] {$J_{g,\calA}$} node [above] {$\sim$} (m-2-1);		
\end{tikzpicture}\end{center}
commutes. 
\end{corollary}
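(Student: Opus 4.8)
The plan is to derive Corollary~\ref{cor_modulartrop} from Theorem~\ref{thm_modulartrop} by restriction to the open locus of smooth curves, so the work is entirely bookkeeping about how the objects in the big commutative triangle interact with the open immersion $\calM_{g,\calA}\hookrightarrow\calMbar_{g,\calA}$. First I would record that, as explained in the paragraph preceding the statement, the strong deformation retraction $\mathbf{p}\colon\Mbar_{g,\calA}^{an}\to\frakS(\calMbar_{g,\calA})$ restricts to a strong deformation retraction $M_{g,\calA}^{an}\to\frakS(\calM_{g,\calA})$, where $\frakS(\calM_{g,\calA})=M_{g,\calA}^{an}\cap\frakS(\calMbar_{g,\calA})$: this is immediate because $\mathbf{p}$ is built stratum-by-stratum from the toroidal structure and the preimage of the open dense stratum (the smooth locus) is $M_{g,\calA}^{an}$, so $\mathbf{p}$ preserves $M_{g,\calA}^{an}$ and the homotopy $H\colon\Mbar_{g,\calA}^{an}\times[0,1]\to\Mbar_{g,\calA}^{an}$ likewise preserves it, cf.\ \cite{Thuillier_toroidal} and \cite[Section 2]{AbramovichCaporasoPayne_tropicalmoduli}.

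Next I would identify the tropical side. On the level of cone complexes, $M_{g,\calA}^{trop}$ is by construction the union of the (open) cells of $\Mbar_{g,\calA}^{trop}$ indexed by dual graphs $G$ of smooth type, i.e.\ the single vertex graph in each genus-and-weight sector, so it is precisely the locus of extended tropical curves with all edge lengths finite and no vertices at infinity; equivalently it is the complement in $\Mbar_{g,\calA}^{trop}$ of the cells added in the extension. The isomorphism $J_{g,\calA}$ of Theorem~\ref{thm_modulartrop} is an isomorphism of extended generalized cone complexes, hence is cellular, hence carries this open subcomplex $M_{g,\calA}^{trop}$ isomorphically onto the corresponding open subcomplex of $\frakS(\calMbar_{g,\calA})$. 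I would then check that this image is exactly $\frakS(\calM_{g,\calA})$: the cells of $\frakS(\calMbar_{g,\calA})$ correspond bijectively to strata of the toroidal embedding, the open strata correspond to the cones of $M_{g,\calA}^{trop}$ under $J_{g,\calA}$, and the smooth locus $\calM_{g,\calA}$ is the open stratum, so $\frakS(\calM_{g,\calA})=M_{g,\calA}^{an}\cap\frakS(\calMbar_{g,\calA})$ is exactly the union of those cells. Thus $J_{g,\calA}$ restricts to an isomorphism $M_{g,\calA}^{trop}\xrightarrow{\sim}\frakS(\calM_{g,\calA})$ of generalized cone complexes (the ``extended'' adjective drops since no infinite cells remain).

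Finally I would verify the commutativity of the restricted triangle. This is automatic: $\mathbf{p}$, $\trop_{g,\calA}$, and $J_{g,\calA}$ all restrict to the respective open subsets by the previous two paragraphs (for $\trop_{g,\calA}$, a point of $M_{g,\calA}^{an}$ corresponds to a family whose generic and special fibers are smooth, hence whose dual graph is the trivial graph, hence whose tropical curve has no infinite edges, landing in $M_{g,\calA}^{trop}$), and the restriction of a commutative diagram of maps to a subset on which all three maps are defined is again commutative. The only point requiring genuine care — and hence the main, if minor, obstacle — is the first one: confirming that the toroidal deformation retraction $\mathbf{p}$ genuinely restricts, i.e.\ that $\frakS(\calM_{g,\calA})$ as defined via intersection really is a subcomplex onto which $M_{g,\calA}^{an}$ strongly deformation retracts and not merely a closed subset. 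This follows from the functoriality of Thuillier's construction with respect to the open immersion of toroidal embeddings $(\calM_{g,\calA}\subset\calM_{g,\calA})\hookrightarrow(\calM_{g,\calA}\subset\calMbar_{g,\calA})$, together with the fact already used in Theorem~\ref{thm_modulartrop} that $M_{g,\calA}^{an}$ is saturated under $\mathbf{p}$; everything else is formal.
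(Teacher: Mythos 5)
Your overall strategy --- restrict the triangle of Theorem \ref{thm_modulartrop} to the smooth locus --- is exactly the paper's (which offers no further argument beyond the remark that $\mathbf{p}$ preserves $M_{g,\calA}^{an}$ and treats the corollary as immediate). However, the step in which you identify the two ends of the restricted isomorphism is internally inconsistent, and the description of $M_{g,\calA}^{trop}$ on which you base it is wrong. You write that $M_{g,\calA}^{trop}$ is ``the union of the (open) cells of $\Mbar_{g,\calA}^{trop}$ indexed by dual graphs $G$ of smooth type, i.e.\ the single vertex graph.'' That is not the construction: $M_{g,\calA}^{trop}=\varinjlim\sigma_G$ has a cone $\sigma_G=\R_{\geq 0}^{E(G)}$ for \emph{every} $(g,\calA)$-stable graph $G$, and the cell of the one-vertex graph is a single point. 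If $J_{g,\calA}$ were restricted to the cells ``of smooth type'' in your sense, you would land on a point, not on $\frakS(\calM_{g,\calA})$. The subset of $\Mbar_{g,\calA}^{trop}=\varinjlim\sigmabar_G$ occupied by $M_{g,\calA}^{trop}$ is not a union of open cells $\sigmabar_G^\circ/\Aut(G)$ at all; it is the union of the finite parts $\sigma_G\subseteq\sigmabar_G$, i.e.\ (as you also assert, although it does not follow from your stated premise) the locus where all edge lengths are finite. The later sentence ``the open strata correspond to the cones of $M_{g,\calA}^{trop}$'' has the same problem: there is exactly one open toroidal stratum, namely $\calM_{g,\calA}$ itself.

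The correct bookkeeping is short. The isomorphism $J_{g,\calA}$ is assembled cone-by-cone from isomorphisms $\sigmabar_G\xrightarrow{\sim}\sigmabar_{\xi_G}$, so it carries $M_{g,\calA}^{trop}=\varinjlim\sigma_G$ onto $\varinjlim\sigma_{\xi_G}$. On the other side, a point of $\sigmabar_{\xi_G}\subseteq\frakS(\calMbar_{g,\calA})$ lies in $M_{g,\calA}^{an}$ exactly when the valuations $\val(f_e)$ of the node-smoothing parameters are all finite, i.e.\ exactly when it lies in the finite cone $\sigma_{\xi_G}$; hence $\varinjlim\sigma_{\xi_G}=M_{g,\calA}^{an}\cap\frakS(\calMbar_{g,\calA})=\frakS(\calM_{g,\calA})$. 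With that substitution the rest of your argument stands: $\mathbf{p}$ preserves $M_{g,\calA}^{an}$ for the same reason (it assigns to $x$ the monomial valuation with the same, finite, values on the boundary equations), $\trop_{g,\calA}$ sends $M_{g,\calA}^{an}$ into $M_{g,\calA}^{trop}$ because a smooth generic fiber forces all $\val(f_e)<\infty$ (not because the dual graph of the special fiber is trivial --- it need not be), and commutativity is inherited by restriction.
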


It is a natural question whether the tautological maps of the moduli spaces $\calMbar_{g,\calA}$, defined in analogy with \cite[Section 3]{Knudsen_projectivityII}, have tropical analogues and commute with tropicalization maps $\trop_{g,\calA}$. We give a positive answer to this Question in Section \ref{section_troptaut}. In particular we are going to focus on the forgetful map from \cite[Theorem 4.3]{Hassett_weightedmoduli} and the gluing and clutching maps as defined in \cite[Proposition 2.1.1]{BayerManin_weightedstablemaps} for the moduli spaces of weighted stable maps. 

Moreover, for two weight data $(g,\calA)$ and $(g,\calB)$ with $a_i\geq b_i$ for all $1\leq i\leq n$ Hassett constructs in \cite[Section 4]{Hassett_weightedmoduli} a proper birational \emph{reduction morphism} 
\begin{equation*}
\rho_{\calA,\calB}\mathrel{\mathop:}\calMbar_{g,\calA}\rightarrow\calMbar_{g,\calB}
\end{equation*}
that contracts those boundary divisors that parametrize $\calA$-stable curves that are not $\calB$-stable. These reduction morphisms play a central role in the birational geometry of the Deligne-Knudsen-Mumford moduli spaces $\calMbar_{g,n}$, since they form contractions onto certain log-canonical models of $\calMbar_{g,n}$. For these developments we refer the reader to \cite{Fedorchuk_weightedmoduli} and \cite{Moon_logcanonicalmodels} as well as to the survey \cite{FedorchukSmyth_alternatemoduli}. In Section \ref{section_forgetful&reduction} we define tropical analogues of these morphisms that commute with the tropicalization map. 

In Section \ref{section_wallsandchambers} we investigate how the tropical moduli spaces $\Mbar_{g,\calA}^{\trop}$ vary in the weights $\calA$ based on how the moduli spaces $\calMbar_{g,\calA}$ vary in the weights $\calA$. 
In Section \ref{section_LosevManin} we finish with the classical example of Losev-Manin spaces, as defined in  \cite{LosevManin_newmoduli}. 

During the work on this project the author learned about the article \cite{CavalieriHampeMarkwigRanganathan_weightedmoduli}, which contains the $g=0$ case of Theorem \ref{thm_modulartrop} in \cite[Theorem 3.15]{CavalieriHampeMarkwigRanganathan_weightedmoduli}. The main goal of \cite{CavalieriHampeMarkwigRanganathan_weightedmoduli}, however, is to treat the tropicalization of $\Mbar_{0,\calA}$ from the point of view of geometric tropicalization, as developed in \cite{HackingKeelTevelev_modulidelPezzo} and further studied in \cite{Cueto_geometrictropicalization}. For this the authors of \cite{CavalieriHampeMarkwigRanganathan_weightedmoduli} embed $\Mbar_{0,\calA}$ into a toric variety $X$ and study the tropicalization $\Trop_X(\Mbar_{0,\calA})$ of $\Mbar_{0,\calA}$ with respect to $X$. By \cite[Theorem 1.1 and 1.2]{Ulirsch_functroplogsch} as well as Theorem \ref{thm_modulartrop} there is a natural continuous and surjective map 
\begin{equation*}
\Mbar_{0,\calA}^{trop}\longrightarrow \Trop_X(\Mbar_{0,\calA})
\end{equation*}
that is, in general, not injective, as can be seen in \cite[Figure 4]{CavalieriHampeMarkwigRanganathan_weightedmoduli}. The main result of \cite{CavalieriHampeMarkwigRanganathan_weightedmoduli} is a characterization of those weights $\calA$ for which this map is a bjiection, i.e. for which the geometric tropicalization of $\Mbar_{0,\calA}$ faithfully represents the full tropical moduli space $\Mbar_{0,\calA}^{trop}$.

\subsection{Acknowledgements} 
The author would like to express his gratitude to Dan Abra\-movich for his constant support and encouragement. Thanks are also due to Renzo Cavalieri, Noah Giansiracusa, Simon Hampe, Diane MacLagan, Steffen Marcus, and Dhruv Ranganathan for several discussions related to this project. Finally, we would also like to thank the anonymous referee for many helpful remarks and suggestions. 

\section{Weighted stable tropical curves and their moduli}

In this section we define tropical versions of Hassett's moduli spaces $\calMbar_{g,\calA}$ of weighted stable curves. Our treatment of tropical moduli spaces in this section is strongly inspired by \cite[Section 3]{Caporaso_tropicalmoduli} and \cite[Section 4]{AbramovichCaporasoPayne_tropicalmoduli}. 

\subsection{$\mathcal{A}$-stability for weighted graphs}

Recall (e.g. \cite[Section 3.2]{AbramovichCaporasoPayne_tropicalmoduli}, \cite[Definition 2.1]{Caporaso_tropicalmoduli}, or \cite[Definition 2.3 and 2.4]{Manin_quantumbook}) that a \emph{weighted graph $G$ with $n$ marked  legs} is a sextuple
\begin{equation*}
\big(V(G),F(G),r,i,m,h\big) 
\end{equation*}
consisting of:
\begin{itemize}
\item a finite set of vertices $V(G)$,
\item a finite set of \emph{flags} $F(G)$ together with a \emph{root map} $r\mathrel{\mathop:}F(G)\rightarrow V(G)$ associating to a flag of $G$ the vertex it emanates from,
\item an involution $i\mathrel{\mathop:}F(G)\rightarrow F(G)$ inducing a decomposition of $F(G)$ into the set $L(G)$ of fixed points of $i$, called the \emph{legs} of $G$, and a finite union of pairs of points, called the \emph{edges} of $G$, 
\item a marking of $L(G)$, i.e. a bijection $m\mathrel{\mathop:}\{1,\ldots,n\}\xrightarrow{\sim} L(G)$ given by $L(G)=\{l_1,\ldots,l_n\}$, and
\item a \emph{weight function} $h\mathrel{\mathop:}V(G)\rightarrow \N$ associating to every vertex a nonnegative integer $h(v)$, referred to as the \emph{genus} of $v$.
\end{itemize}

We write $E(G)$ for the set of edges of $G$. Whenever there is no risk of confusion we sometimes drop the reference to $G$ from our notation and, for example, denote the set of vertices of $G$ by $V$ instead of $V(G)$.

The \emph{genus} $g(G)$ of $G$ is defined to be
\begin{equation}\label{eq_genusgraph}
g(G)=b_1(G)+\sum_{v\in V}h(v) \ ,
\end{equation}
where $b_1(G)=\dim_\Q H^1(G,\Q)=\#E(G)-\#V(G)+1$ is the first Betti number of $G$. An \emph{automorphism} $\gamma\in\Aut(G)$ consists of bijective maps $V(G)\xrightarrow{\sim} V(G)$ and $F(G)\xrightarrow{\sim} F(G)$ making the obvious diagrams commute. So, in particular, we have $g\big(\gamma(v)\big)=g(v)$ for all vertices $v\in V(G)$, the induced map $L(G)\xrightarrow{\sim}L(G)$ is the identity, and $\gamma$ preserves incidences between edge and vertices as well as legs and vertices. 

For a vertex $v\in V$, we write $L(v)$ for the set of marked legs emanating from $v$ and $\vert v\vert_E$ for the number of flags emanating from $v$ that are contained in an edge, i.e. the number of edges starting at $v$ counting loops with multiplicity two. Moreover, given an \emph{input datum} $(g,\calA)$, consisting of a non-negative integer $g$ together with a collection $\calA=(a_1,\ldots,a_n)$ of numbers $a_i\in\QQ\cap(0,1]$ such that 
\begin{equation*}
2g-2+a_1+\dots+a_n>0 \ ,
\end{equation*}
we set $\vert v\vert_\calA=\sum_{l_i\in L(v)}a_i$ for a vertex $v\in V$.

\begin{definition}\label{def_weightedstablegraph}
Let $(g,\calA)$ be an input datum. A weighted graph $G$ with $n$ legs is said to be \emph{stable of type $(g,\calA)$}, if it has genus $g$ and for all vertices $v\in V(G)$ we have:
\begin{equation*}
2h(v)-2+\vert v\vert_E+\vert v\vert_\calA> 0 \ .
\end{equation*}
If $G$ is stable of type $(g,\calA)$ with $\calA=(1,\ldots,1)$ we simply call it \emph{stable}.
\end{definition} 

 A \emph{weighted graph contraction} $\pi\mathrel{\mathop:}G\rightarrow G'$ is a composition of edge contractions that preserves the weight function in the sense that 
\begin{equation*}
h'(v')=g\big(\pi^{-1}(v')\big)
\end{equation*}
for all vertices $v'\in V'=V(G')$, where we consider $\pi^{-1}(v')$ as a subgraph of $G$. Observe that, if $G$ is stable of type $(g,\calA)$, the contracted graph $G'$ is stable of type $(g,\calA)$ as well. 

\begin{definition}
Let $(g,\calA)$ be an input datum. The category of $\mathcal{G}_{g,\calA}$ of weighted graphs that are stable of type $(g,\calA)$ is defined as follows:
\begin{itemize}
\item Its objects are isomorphism classes of weighted graphs $G$ that are stable of type $(g,\calA)$.
\item The morphisms in $\calG_{g,\calA}$ are generated by weighted graph contractions $\pi\mathrel{\mathop:}G\rightarrow G'$ and automorphisms $\Aut(G)$ of every graph $G$. 
\end{itemize}
\end{definition}

Note hereby that the set of isomorphisms between two weighted graphs $G_1$ and $G_2$ is either empty or a natural $\Aut(G_i)$-torsor for both $i=1,2$. In particular, a choice of an isomorphism $G_1\simeq G_2$ induces natural identifications between their automorphism groups $\Aut(G_i)$ and their weighted graph contractions $\pi_i\mathrel{\mathop:}G_i\rightarrow G_i'$.

\begin{remarks}
\begin{enumerate}[(i)]
\item The datum of the weight function $h\mathrel{\mathop:}V\rightarrow\N$ should be thought of as having  $h(v)$ infinitely small loops at each vertex $v$. This, in particular, explains why the genus of $G$ is defined as in \eqref{eq_genusgraph}.
\item Expanding on \cite{BakerNorine_RiemannRoch} we can define a $\QQ$-divisor on $G$ as a formal sum 
$\sum_{v\in V}a_v(v)$ with coefficients $a_v\in\QQ$. In this language a weighted graph $G$ is stable of type $(g,\calA)$ if and only if it has genus $g$ and the coefficients of the \emph{twisted canonical divisor} 
\begin{equation*}
K_{G,\calA}=\sum_{v\in V}\big(2h(v)-2+\vert v\vert_E+\vert v\vert_\calA\big)(v)
\end{equation*}
on $G$ are strictly positive. 
\item A theory similar to the one developed in this section has already been developed in \cite[Section 5]{BayerManin_weightedstablemaps}. We, in particular, refer the reader to \cite[Definition 5.1.6]{BayerManin_weightedstablemaps} which is equivalent to Definition \ref{def_weightedstablegraph}. Nevertheless note that the notion of a contraction in \cite[Definition 5.1.5]{BayerManin_weightedstablemaps} is different from ours, since the authors of \cite{BayerManin_weightedstablemaps} also allow legs to be merged.  
\end{enumerate}
\end{remarks}

\subsection{Tropical moduli spaces}
Following \cite[Definition 2.2]{Caporaso_tropicalmoduli} and \cite[Section 4.1]{AbramovichCaporasoPayne_tropicalmoduli} a \emph{tropical curve $\Gamma$ of genus $g$ with $n$ legs} consists of a weighted graph $G$ of genus $g$ with $n$ legs together with a \emph{length function} $l\mathrel{\mathop:}E(G)\rightarrow\R_{> 0}$. By identifying an edge $e$ with an interval of length $l(e)$ we can associate to $\Gamma$ a metric space $\vert\Gamma\vert$, which is called the \emph{geometric realization} of $\Gamma$. 

If we allow the length function $l$ to attain values in $\Rbar_{>0}=\R_{>0}\sqcup\{\infty\}$, we say that $\Gamma$ is an \emph{extended tropical curve} of genus $g$ with $n$ legs. Its \emph{geometric realization}  has the structure of an extended metric space by identifying an edge $e$ with $l(e)=\infty$ with the double infinite line \begin{equation*}
\big(\R_{\geq0}\sqcup\{\infty\}\big)\cup \big(\R_{\leq0}\sqcup\{-\infty\}\big) \ ,
\end{equation*}
where the two points $\infty$ and $-\infty$ are identified.

\begin{center}\begin{tikzpicture}
\fill (0,0) circle (0.08 cm);
\fill (-2,0) circle (0.08 cm);
\fill (2,0) circle (0.08 cm);
      
\draw (-2,0) -- (-1,0);
\draw (1,0) -- (2,0);
\draw [dashed] (-1,0) -- (0,0);
\draw [dashed] (1,0) -- (0,0);

\node at (0,0.5) {$\pm\infty$};
\node at (3.5,-0.25) {$l(e)=\infty$};
\end{tikzpicture}\end{center}

We denote the category of rational polyhedral cone complexes as defined in \cite[Section 3.2]{Ulirsch_functroplogsch} by $\mathbf{RPCC}$.

\begin{definition}
Let $(g,\calA)$ be an input datum. We define a natural contravariant functor 
\begin{equation*}
\Sigma\mathrel{\mathop:}\mathcal{G}_{g,\calA}\longrightarrow \mathbf{RPCC}
\end{equation*}
as follows:
\begin{itemize}
\item Associated to an isomorphism class of a weighted graph $G$ that is stable of type $(g,\calA)$ is the rational polyhedral cone $\sigma_G=\R_{\geq 0}^{E(G)}$.
\item A weighted edge contraction $\pi\mathrel{\mathop:}G\rightarrow G'$ induces the natural embedding $i_\pi\mathrel{\mathop:}\sigma_{G'}\hookrightarrow\sigma_G$ of a face of $\sigma_G$ .
\item An automorphism of $G$ induces an automorphism of $\sigma_G$.
\end{itemize}
\end{definition}
 
Similarly there is also a natural functor $\Sigmabar$ from $\mathcal{G}_{g,\calA}$ into the category of extended rational polyhedral cone complexes that is given by $G\mapsto \sigmabar_G=\Rbar_{\geq 0}^{E(G)}$. We denote the image of $\calG_{g,\calA}$ in $\mathbf{RPCC}$ simply by $\Sigma_{g,\calA}$ and  the category of its extensions by $\Sigmabar_{g,\calA}$.

\begin{definition}
The \emph{moduli space $M_{g,\calA}^{trop}$ of $\calA$-stable tropical curves of genus $g$ with $n$ marked legs} is defined to be the colimit
\begin{equation*}
M_{g,\calA}^{trop}=\lim_{\longrightarrow} \sigma_G \ ,
\end{equation*}
taken over $(\calG_{g,\calA})^{op}$. The \emph{moduli space $\Mbar_{g,\calA}^{trop}$ of extended $\calA$-stable tropical curves of genus $g$ with $n$ marked legs} is defined to be the colimit
\begin{equation*}
\Mbar_{g,\calA}^{trop}=\lim_{\longrightarrow} \sigmabar_G
\end{equation*}
taken over $(\calG_{g,\calA})^{op}$.
\end{definition}

The tropical moduli spaces $M_{g,\calA}^{trop}$ and $\Mbar_{g,\calA}^{trop}$ naturally carry the structure of a generalized cone complex and an extended generalized cone complex in the sense of \cite[Section 2]{AbramovichCaporasoPayne_tropicalmoduli} respectively.

Fix a weighted graph $G$ of genus $g$ with $n$ legs. We write $\sigma_G^\circ$ for the open cone $\R_{>0}^{E(G)}$. The set of tropical curves $\Gamma$ with underlying weighted graph isomorphic to $G$ can be parametrized by the quotient 
\begin{equation*}
M_G^{trop}=\sigma_G^\circ/\Aut(G) \ ,
\end{equation*}
the \emph{moduli space of tropical curve of combinatorial type $G$}. Similarly, if we replace $\sigma_G^\circ$ by the extended open cone $\sigmabar_G^\circ=\Rbar_{>0}^{E(G)}$, we obtain the set-theoretic \emph{moduli space 
\begin{equation*}
\Mbar_G^{trop}=\sigmabar_G^\circ/\Aut(G)
\end{equation*}
of extended tropical curves of combinatorial type $G$}.

From this point of view one can interpret the quotients $\sigma_G/\Aut(G)$ (or $\sigmabar_G/\Aut(G)$) as moduli spaces of tropical curves (or extended tropical curves), where we allow the edges to have zero length. For a weighted edge contraction $\pi\mathrel{\mathop:}G\rightarrow G'$ the faces $i_\pi\mathrel{\mathop:}\sigma_{G'}\hookrightarrow\sigma_G$ and $\overline{i}_\pi\mathrel{\mathop:}\sigmabar_{G'}\hookrightarrow\sigmabar_G$ parametrize those tropical curves, or extended tropical curves respectively, that have edge length zero for the edges that are collapsed by $\pi$.

As an immediate consequence of the construction we have:

\begin{proposition} 
There are decompositions
\begin{equation*}
M_{g,\calA}^{trop}=\bigsqcup_{G\in\calG_{g,\calA}}M_G^{trop}=\bigsqcup_{G\in\calG_{g,\calA}}\sigma^\circ_G/\Aut(G)
\end{equation*}
as well as 
\begin{equation*}
\Mbar_{g,\calA}^{trop}=\bigsqcup_{G\in\calG_{g,\calA}}\Mbar_G^{trop}=\bigsqcup_{G\in\calG_{g,\calA}}\sigmabar^\circ_G/\Aut(G) \ .
\end{equation*}
\end{proposition}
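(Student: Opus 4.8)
The plan is to unwind the colimit defining $M_{g,\calA}^{trop}$ one stratum at a time, using the elementary fact that a rational polyhedral cone is the disjoint union of the relative interiors of its faces. Concretely, $\sigma_G=\R_{\geq 0}^{E(G)}$ decomposes as $\bigsqcup_{S\subseteq E(G)}\sigma_{G,S}^\circ$, where $\sigma_{G,S}^\circ=\{x\in\sigma_G:x_e=0\iff e\in S\}$ is the relative interior of the face obtained by setting the $S$-coordinates to zero; and that face is precisely the image of the embedding $i_\pi\colon\sigma_{G/S}\hookrightarrow\sigma_G$ attached to the weighted edge contraction $\pi\colon G\to G/S$ collapsing the edges in $S$ (note $E(G/S)=E(G)\setminus S$). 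Thus, given $x\in\sigma_G$ and writing $S(x)=\{e\in E(G):x_e=0\}$, the point $x$ lies in $i_\pi(\sigma_{G/S(x)}^\circ)$ and is therefore identified in the colimit with the point $\bar x\in\sigma_{G/S(x)}^\circ$ having all coordinates positive; the graph $G/S(x)$ is the combinatorial type of the corresponding (extended) tropical curve. The two decompositions asserted in the proposition amount to the statement that the canonical images of the sets $M_G^{trop}=\sigma_G^\circ/\Aut(G)$ (their definition) in $M_{g,\calA}^{trop}$ are pairwise disjoint and cover, and likewise for $\Mbar_G^{trop}$ and $\Mbar_{g,\calA}^{trop}$; equivalently, that the natural map $\iota\colon\bigsqcup_{G\in\calG_{g,\calA}}\sigma_G^\circ/\Aut(G)\to M_{g,\calA}^{trop}$ is a bijection.

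First I would construct a retraction $q\colon\bigsqcup_G\sigma_G\to\bigsqcup_{G\in\calG_{g,\calA}}\sigma_G^\circ/\Aut(G)$ sending $x\in\sigma_G$ to the class of $\bar x$ in $\sigma_{G/S(x)}^\circ/\Aut(G/S(x))$. This is well defined: since the objects of $\calG_{g,\calA}$ are isomorphism classes, $G/S(x)$ is only pinned down up to isomorphism, but the resulting ambiguity in $\bar x$ is exactly an $\Aut(G/S(x))$-orbit, which is killed by the quotient. Next I would check that $q$ is constant on the equivalence classes defining the colimit; since the morphisms of $\calG_{g,\calA}$ are generated by automorphisms and contractions, it suffices to treat the two generating identifications. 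For $\gamma\in\Aut(G)$: $\gamma$ permutes $E(G)$, so it carries $S(x)$ to $S(\gamma x)$ and induces an isomorphism $G/S(x)\cong G/S(\gamma x)$ matching $\bar x$ with $\overline{\gamma x}$, whence $q(\gamma x)=q(x)$. For a face embedding $i_\pi\colon\sigma_{G'}\hookrightarrow\sigma_G$ with $\pi\colon G\to G'$ collapsing $S\subseteq E(G)$: if $x=i_\pi(x')$ then $S(x)=S\sqcup S(x')$ inside $E(G)$, so $G/S(x)=G'/S(x')$ canonically and $\bar x=\bar x'$ under the identification $E(G)\setminus S(x)=E(G')\setminus S(x')$, whence $q(x)=q(x')$. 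Therefore $q$ factors through a map $\bar q\colon M_{g,\calA}^{trop}\to\bigsqcup_{G}\sigma_G^\circ/\Aut(G)$.

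Finally I would observe that for each chosen representative $G$ the composite $\sigma_G^\circ\hookrightarrow\sigma_G\to M_{g,\calA}^{trop}$ is $\Aut(G)$-invariant, hence factors through $\sigma_G^\circ/\Aut(G)$, and assembling these factorizations gives the map $\iota$ above. Then $\bar q\circ\iota=\mathrm{id}$ because a point of $\sigma_G^\circ$ already has full support, so it is its own $\bar x$; and $\iota\circ\bar q=\mathrm{id}$ because, as noted, every $x\in\sigma_G$ is colimit-equivalent to $\bar x\in\sigma_{G/S(x)}^\circ$. Hence $\iota$ is a bijection, which is the claimed decomposition of $M_{g,\calA}^{trop}$; the argument is purely combinatorial and applies verbatim to the extended spaces, replacing $\R$ by $\Rbar$ and $\sigma_G$ by $\sigmabar_G=\Rbar_{\geq 0}^{E(G)}$, since the decomposition of $\sigmabar_G$ by the support of the coordinate vector is formally identical. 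I expect the only friction to be bookkeeping: tracking that contractions are defined only up to isomorphism so that the $\Aut$-quotients absorb this indeterminacy, and confirming that the colimit relation is no coarser than expected — which holds because face embeddings are injective and automorphisms bijective, so a point in the relative interior of one cone can only be identified with a point in the relative interior of an isomorphic cone.
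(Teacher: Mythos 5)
Your argument is correct and is exactly the reasoning the paper has in mind: the paper states this proposition without proof ("as an immediate consequence of the construction"), relying on the observation, made just beforehand, that the face $i_\pi(\sigma_{G'})\subseteq\sigma_G$ attached to a contraction $\pi$ parametrizes the curves whose collapsed edges have length zero. Your retraction $q$ onto $\bigsqcup_G\sigma_G^\circ/\Aut(G)$ is the standard careful way to verify that the colimit identifications are no coarser than the face/automorphism gluings, and your handling of the up-to-isomorphism ambiguity via the $\Aut$-quotients is the right bookkeeping.
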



\section{Dual graphs and the boundary of $\overline{\mathcal{M}}_{g,\mathcal{A}}$}\label{section_dualgraphs&boundary}

We study the structure of the boundary $\calMbar_{g,\calA}-\calM_{g,\calA}$ of $\calMbar_{g,\calA}$, i.e. the complement of the locus of non-singular curves, using the machinery of dual graphs. The main result of this section is the proof of Theorem \ref{thm_SNC} which states that the boundary has (stack-theoretically) normal crossings. 

\subsection{Proof of Theorem \ref{thm_SNC}}\label{section_alternativeSNC} 
The author is aware of at least two different proofs of Theorem \ref{thm_SNC}.
Following \cite[Section 3.3]{Hassett_weightedmoduli} one can undertake a detailed analysis of the formal deformation spaces of weighted stable curves, using the deformation theory of maps, as developed in \cite{Ran_defofmaps}. This approach has been carried out in an earlier version of this article.

Here we present an alternative approach to the proof of Theorem \ref{thm_SNC} that is much shorter and more elementary. The author would like to thank Dan Abramovich for communicating this proof to him.
It essentially reduces Theorem \ref{thm_SNC} to the analogous theorem for the Deligne-Knudsen-Mumford moduli spaces $\calMbar_{g,n}$, a case that has already been discussed in \cite[Theorem 2.7]{Knudsen_projectivityII}. 

Set $N=\dim\calMbar_{g,\calA}=3g-3+n$. Let $\frako_k$ be either equal to $k$, if $\characteristic k=0$, or to the unique complete regular local ring with residue field $k$ and and maximal ideal generated by $p$, if $\characteristic k=p\neq 0$. Theorem \ref{thm_SNC} immediately follows from the following.

\begin{theorem}\label{thm_nodecoordinates}
Let $z$ be a point of $\calMbar_{g,\calA}$ corresponding to an $\calA$-stable curve $(C,p_1,\ldots,p_n)$ with nodes $x_1,\ldots, x_k$. Then there are formal coordinates $t_1,\ldots t_N$ around $z$ such that the complete local ring $\widehat{\calO}_{\calMbar_{g,\calA},z}$ is isomorphic to $\frako_k\llbracket t_1,\ldots,t_N\rrbracket$ and the locus where $x_i$ stays a node is given by $t_i=0$ for $1\leq i\leq k$. 
\end{theorem}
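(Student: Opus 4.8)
The plan is to leverage the reduction morphism $\rho_{\calA,\calB}\colon \calMbar_{g,\calA}\to\calMbar_{g,n}$, where $\calB=(1,\ldots,1)$, together with the fact that Theorem~\ref{thm_nodecoordinates} is already known for $\calMbar_{g,n}$ by \cite[Theorem~2.7]{Knudsen_projectivityII}. The key observation is local: near a point $z$ corresponding to an $\calA$-stable curve $(C,p_1,\ldots,p_n)$, the reduction morphism $\rho=\rho_{\calA,\calB}$ is an isomorphism onto its image in a neighborhood of the corresponding stabilization. More precisely, I would argue that $\rho$ identifies the formal neighborhood of $z$ in $\calMbar_{g,\calA}$ with a \emph{smooth} formal substack of the formal neighborhood of $\rho(z)=z'$ in $\calMbar_{g,n}$, namely the locus where certain nodes of the stabilization stay nodes. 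The stabilization $(C',p_1',\ldots,p_n')$ of $(C,p_1,\ldots,p_n)$ is obtained by sprouting rational bridges/tails at the points where marked points collide or where the twisted canonical fails to be ample; its nodes are the nodes $x_1,\ldots,x_k$ of $C$ together with some extra nodes $y_1,\ldots,y_m$ created by the stabilization procedure. The curve $(C,p_1,\ldots,p_n)$ together with all its infinitesimal deformations that remain $\calA$-stable corresponds exactly to those deformations of $(C',p'_\bullet)$ that do not smooth the nodes $y_1,\ldots,y_m$.

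Concretely, the steps I would carry out are as follows. First, I recall from \cite[Section~4]{Hassett_weightedmoduli} the explicit local description of the reduction morphism $\rho$ and, in particular, that the composite $\calM_{g,\calA}\hookrightarrow\calMbar_{g,\calA}\xrightarrow{\rho}\calMbar_{g,n}$ identifies $\calM_{g,\calA}$ with an open substack of $\calMbar_{g,n}$ (this is essentially because a smooth $\calA$-stable curve with distinct marked points is automatically DKM-stable; collisions of marked points only occur on the boundary). Second, I analyze the stabilization $(C',p'_\bullet)$ of $(C,p_\bullet)$ and show that $\rho$ induces an isomorphism of $\widehat{\calO}_{\calMbar_{g,\calA},z}$ onto $\widehat{\calO}_{Z,z'}$, where $Z\subset\calMbar_{g,n}$ is the closed substack cut out, in the Knudsen coordinates $s_1,\ldots,s_N$ at $z'$, by the equations $s_{k+1}=\cdots=s_{k+m}=0$ corresponding to the ``extra'' nodes $y_1,\ldots,y_m$ created by stabilization. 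Third, I invoke \cite[Theorem~2.7]{Knudsen_projectivityII} to get that $\widehat{\calO}_{\calMbar_{g,n},z'}\cong \frako_k\llbracket s_1,\ldots,s_N\rrbracket$ with the node-preserving loci being coordinate hyperplanes; restricting to $Z$ then gives $\widehat{\calO}_{\calMbar_{g,\calA},z}\cong\frako_k\llbracket t_1,\ldots,t_N\rrbracket$ with $t_i=s_i$ for $i\le k$ (and $t_{k+j}$ for $j\ge1$ the pullbacks of a complementary system of coordinates transverse to $Z$), and the locus where $x_i$ stays a node is $t_i=0$. The smoothness of $\calMbar_{g,\calA}$ over $\frako_k$, already part of Hassett's \cite[Theorem~2.1]{Hassett_weightedmoduli}, guarantees that $\widehat{\calO}_{\calMbar_{g,\calA},z}$ is indeed a power series ring of the expected dimension $N=3g-3+n$, so the only content is the identification of the node-preserving divisors with coordinate hyperplanes.

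The main obstacle I expect is making the second step precise, i.e.\ rigorously identifying the formal neighborhood of $z$ in $\calMbar_{g,\calA}$ with a smooth formal substack of $\calMbar_{g,n}$ defined by setting the ``extra node'' parameters to zero. This requires a careful bookkeeping of the stabilization construction: one must check that every $\calA$-stable deformation of $(C,p_\bullet)$ stabilizes to a DKM-stable deformation of $(C',p'_\bullet)$ that preserves the nodes $y_1,\ldots,y_m$, and conversely that every such node-preserving deformation of $(C',p'_\bullet)$ contracts back to an $\calA$-stable deformation of $(C,p_\bullet)$ — in other words, that the reduction morphism is a closed immersion on formal neighborhoods with image exactly $Z$. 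The subtlety is that the stabilization/contraction operations behave well in families only after the correct choice of which components to contract, and one needs the fact (also from \cite{Hassett_weightedmoduli}) that over the locus in $\calMbar_{g,n}$ where the marked points that are ``supposed to collide'' actually lie on contractible rational components, the contraction is flat and its fibers are $\calA$-stable. Once this compatibility is established, the rest is formal. An alternative, which the excerpt alludes to, is the direct deformation-theoretic computation using \cite{Ran_defofmaps}; but the reduction-morphism argument is cleaner precisely because it outsources all the hard local analysis to the already-established \cite[Theorem~2.7]{Knudsen_projectivityII}.
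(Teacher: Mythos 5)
Your strategy rests on a morphism $\rho\colon\calMbar_{g,\calA}\to\calMbar_{g,n}$, but Hassett's reduction morphisms only go in the opposite direction: for weight data with $a_i\geq b_i$ one gets $\rho_{\calA,\calB}\colon\calMbar_{g,\calA}\to\calMbar_{g,\calB}$, so since each $a_i\leq 1$ the available map is $\calMbar_{g,n}\to\calMbar_{g,\calA}$, which contracts boundary divisors and is therefore far from a local isomorphism near the points you care about. There is no ``stabilization morphism'' from $\calMbar_{g,\calA}$ to $\calMbar_{g,n}$: when $r\geq 3$ marked points collide on an $\calA$-stable curve, the fibre of the reduction morphism over that point is a positive-dimensional $\calMbar_{0,r+1}$, so the sprouted curve $(C',p'_\bullet)$ is not canonically determined by $(C,p_\bullet)$. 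Relatedly, your first step is false as stated: $\calM_{g,\calA}$ is strictly larger than $\calM_{g,n}$ whenever collisions are permitted, and it is not an open substack of $\calMbar_{g,n}$.

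Even after fixing a point $z'\in\rho^{-1}(z)$ by hand, your second step cannot hold for dimension reasons. The substack $Z$ cut out by $s_{k+1}=\cdots=s_{k+m}=0$ has complete local ring a power series ring in $N-m$ variables, whereas $\widehat{\calO}_{\calMbar_{g,\calA},z}$ is a power series ring in $N$ variables; these agree only when $m=0$, i.e.\ when no marked points actually collide at $z$. The underlying error is the asserted bijection between $\calA$-stable deformations of $(C,p_\bullet)$ and node-preserving deformations of $(C',p'_\bullet)$: separating the collided marked points is a perfectly good $\calA$-stable deformation of $(C,p_\bullet)$, but under reduction it corresponds to smoothing the extra nodes $y_j$ or to moving points on the sprouted rational components, not to a deformation preserving $y_1,\ldots,y_m$. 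The paper sidesteps all of this by forgetting the marked points: $\calMbar_{g,\calA}$ is an open substack of the $n$-fold fibre product of the universal curve over Olsson's stack $\calS_g$ of nodal curves (because $\calA$-stability is open), hence the forgetful map $\calMbar_{g,\calA}\to\calS_g$ is smooth, and Olsson's formal coordinates on $\calS_g$, in which the node loci are coordinate hyperplanes, pull back to the desired coordinates. If you insist on routing through $\calMbar_{g,n}$ you would have to descend coordinates along the correctly oriented contraction $\calMbar_{g,n}\to\calMbar_{g,\calA}$, which is substantially harder than the problem you are trying to solve.
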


\begin{proof}
Denote by $\calS_{g,n}$ the algebraic stack of nodal curves of genus $g$ with $n$ (possibly singular) marked points, as introduced in \cite[Section 5]{Olsson_logtwistedcurves}. Note that $\calS_{g,n}$ is locally of finite type over $k$. Consider the universal curve $\calC_g$ of $\calS_g$ and denote by $\calC_g^{sm}$ the open substack of $\calC_{g}=\calS_{g,1}$ where the marked point is not singular. We can identifiy $\calMbar_{g,\calA}$ with an open substack of the $n$-fold fibered product $\calC_g^{sm}\times_{\calS_g}\cdots\times_{\calS_g}\calC_g^{sm}$, since $\calA$-stability is an open condition. Therefore the natural forgetful morphism $f\mathrel{\mathop:}\calMbar_{g,\calA}\rightarrow \calS_g$ is smooth.

Denote the image of $z$ in $\calS_g$ by $z'$ and let $N'=3g-3=N-n$. By \cite[Lemma 5.1]{Olsson_logtwistedcurves} there are formal coordinates $t'_1,\ldots,t'_{N'}$ around $z'$ such that
\begin{equation*}
\widehat{\calO}_{\calS_g,z'}\simeq \frako_k\llbracket t'_1,\ldots, t'_{N'}\rrbracket
\end{equation*}
and the locus where $x_i$ stays a node is given by $t'_i=0$ for $1\leq i\leq k$. Since $f\mathrel{\mathop:}\calMbar_{g,\calA}\rightarrow \calS_g$ is smooth, there are formal coordinates $t_1,\ldots, t_N$ around $z$ such that  
\begin{equation*}
\widehat{\calO}_{\calMbar_{g,\calA},z}\simeq \frako_k\llbracket t_1,\ldots, t_{N}\rrbracket
\end{equation*}
and $f^\ast t_i'=t_i$ for all $1\leq i\leq N'$. Therefore the locus where $x_i$ stays a node is given by $t_i=0$. 
\end{proof}

\begin{remark}\label{remark_nonNC}
In general, the complement of the smaller open subset $\calM_{g,n}\subseteq\calM_{g,\calA}$ in $\calMbar_{g,\calA}$ does not have normal crossings. This is due to the fact that marked points $p_{i_1},\ldots,p_{i_k}$ are allowed to coincide whenever the weights fulfill $a_{i_1}+\ldots+a_{i_k}\leq1$ without changing the combinatorial type of the curve. We refer the reader to the example of Losev-Manin spaces $L_n=\Mbar_{0,\calA}$ with $\calA=\big(1,\frac{1}{n},\ldots,\frac{1}{n},1\big)\in\Q^{n+2}\cap(0,1]^{n+2}$ discussed in Section \ref{section_LosevManin} below, where the complement of $M_{0,n+2}$ in $L_n$ clearly does not have normal crossings (see Example \ref{example_L_3nonSNC}).
\end{remark}

\subsection{Stratification by dual graphs}\label{section_stratification}

Let $(C,p_1,\ldots,p_n)$ be a complete and connected nodal curve of genus $g$ with $n$ marked points. We can associate to $C$ its \emph{dual graph} $G_C$, a weighted graph with $n$ marked legs that is defined as follows:
\begin{itemize}
\item The set $V=V(G)$ of vertices of $G$ is the set of irreducible components $C_i$ of $C$. 
\item The set of edges $E=E(G)$ is the set of nodes of $C$, where an edge $e$ connects two vertices $v_i$ and $v_j$ if and only if the corresponding components $C_i$ and $C_j$ meet each other in the node corresponding to $e$.
\item The set of legs $L=L(G)$ is the set of marked points of $C$. A leg $l_i$ emanates from a vertex $v$ if and only if the marked point $p_i$ corresponding to $l_i$ lies in the component $C_v$ corresponding to $v$.
\item The weight function $h\mathrel{\mathop:}V\rightarrow\N$ is defined by associating to a vertex $v$ the genus $g(C_v)$ of the corresponding component $C_v$.
\end{itemize}

It is well-known (see e.g. \cite[Proposition 2.6]{Manin_quantumbook}) that $g(C)=g(G_C)$ and that there is a natural homomorphism
\begin{equation*}
\Aut(C,p_1,\ldots,p_n)\longrightarrow \Aut(G_C)
\end{equation*}
of automorphism groups. 

\begin{proposition}\label{prop_Astability}
Let $(g,\calA)$ be an input datum. For a complete and connected nodal curve $(C,p_1,\ldots,p_n)$ of genus $g$ with $n$ marked points the following properties are equivalent:
\begin{enumerate}[(i)]
\item The twisted canonical divisor $K_C+a_1p_1+\ldots+a_np_n$ on $C$ is ample.
\item The dual graph $G_C$ of $C$ is stable of type $\calA$.
\end{enumerate}
\end{proposition}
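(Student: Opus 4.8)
The plan is to compute the degree of the restriction of the twisted canonical divisor $K_C + \sum a_i p_i$ to each irreducible component $C_v$ of $C$, express this in terms of the combinatorial data of the dual graph at the vertex $v$, and then invoke the standard ampleness criterion for a line bundle on a complete nodal curve: a line bundle $\calL$ on $C$ is ample if and only if $\deg(\calL|_{C_v}) > 0$ for every irreducible component $C_v$.

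First I would fix an irreducible component $C_v$ with normalization $\widetilde{C_v}$ of genus $h(v) = g(C_v)$. By adjunction, the dualizing sheaf $\omega_C$ restricts to $C_v$ as $\omega_{\widetilde{C_v}}$ twisted up by the preimages of the nodes lying on $C_v$; more precisely, if $C_v$ meets the rest of $C$ (or itself, in the case of a self-node) in a set of points whose total number, counted with the appropriate multiplicity for loops, is exactly $|v|_E$, then $\deg(\omega_C|_{C_v}) = 2h(v) - 2 + |v|_E$. Similarly, the marked points $p_i$ lying on $C_v$ are precisely those with $l_i \in L(v)$, so $\deg\big((\sum a_i p_i)|_{C_v}\big) = \sum_{l_i \in L(v)} a_i = |v|_\calA$. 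Combining these gives
\begin{equation*}
\deg\Big(\big(K_C + \textstyle\sum_i a_i p_i\big)\big|_{C_v}\Big) = 2h(v) - 2 + |v|_E + |v|_\calA \ ,
\end{equation*}
which is exactly the coefficient of $(v)$ in the twisted canonical divisor $K_{G_C,\calA}$ of the dual graph.

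With this identity in hand, the equivalence is essentially immediate: $K_C + \sum a_i p_i$ is ample if and only if its degree on every component $C_v$ is strictly positive (this is the numerical criterion for ampleness on a curve, which applies because positivity on every component of a complete curve forces positivity on every complete curve inside $C$), which by the computation above holds if and only if $2h(v) - 2 + |v|_E + |v|_\calA > 0$ for every $v \in V(G_C)$, i.e.\ if and only if $G_C$ is stable of type $(g,\calA)$ in the sense of Definition \ref{def_weightedstablegraph}. One should also note that $g(C) = g(G_C)$ is automatic, so the genus condition in the definition of $\calA$-stability is satisfied on both sides.

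The step I expect to require the most care is the degree computation for $\omega_C|_{C_v}$, in particular getting the bookkeeping right for the nodes: a node where $C_v$ meets a \emph{different} component contributes one point to the divisor on $C_v$, whereas a node that is a self-intersection of $C_v$ contributes two points to the normalization $\widetilde{C_v}$, which matches the convention that loops count with multiplicity two in $|v|_E$. I would justify this locally using the standard description of $\omega_C$ near a node (generated by $\frac{dx}{x} = -\frac{dy}{y}$ in the two branches), and then globalize. The ampleness criterion itself I would simply cite (e.g.\ via Nakai--Moishezon, or the classical statement for nodal curves), so the argument reduces to this one local-to-global degree bookkeeping.
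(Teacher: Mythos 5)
Your proposal is correct and follows essentially the same route as the paper: the paper's proof likewise reduces ampleness to positivity of the degree of the pullback to the normalization of each irreducible component, identifies that degree with $2h(v)-2+\vert v\vert_E+\vert v\vert_\calA$, and concludes by the definition of $\calA$-stability. You simply spell out the node bookkeeping and the ampleness criterion in more detail than the paper does.
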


\begin{proof}
The twisted canonical divisor $K_C+a_1p_1+\ldots+a_np_n$ on $C$ is ample if and only if its pullback to the normalization of each irreducible component of $C$ is effective and this is equivalent to 
\begin{equation*}
2h(v)-2+\vert v\vert_E+\vert v\vert_\calA> 0 
\end{equation*} 
for all vertices $v$ of $G_C$. But this is the exact definition of $G_C$ being stable of type $(g,\calA)$.
\end{proof}

Fix an input datum $(g,\calA)$. For a weighted graph $G$ of genus $g$ with $n$ legs that is stable of type $\calA$ we denote by $\calM_{G}$ the locally closed substack of $\calMbar_{g,\calA}$ consisting of those $\calA$-stable curves $C$ whose dual graph $G_C$ is equal to $G$. The closure of $\calM_G$ in $\calMbar_{g,\calA}$ will be denoted by $\calMbar_G$. Note that, if $G$ is the unique graph $\{\ast\}_{g,n}$ of genus $g$ with $n$ legs and one vertex, the stack $\calM_{\{\ast\}_{g,n}}$ exactly parametrizes those $\calA$-stable curves that are smooth and therefore coincides with $\calM_{g,\calA}$. 

It is not hard to see that the locally closed substacks $\calM_{G}$ are the strata of a stratification of $\calMbar_{g,\calA}$, i.e. 
\begin{equation*}
\calMbar_{g,\calA}=\bigsqcup_{G} \calM_G \ ,
\end{equation*}
where the disjoint union is taken over all isomorphism classes of weighted graphs $G$ of genus $g$ with $n$ legs that are stable of type $\calA$. As an immediate consequence of Theorem \ref{thm_SNC} we have:

\begin{corollary}\label{cor_codim=numedges}
The codimension of the locally closed stratum $\calM_G$ is equal to the number of edges of the $\calA$-stable weighted graph $G$.
\end{corollary}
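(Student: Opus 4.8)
The plan is to deduce this directly from Theorem \ref{thm_nodecoordinates}, exactly as the phrase ``as an immediate consequence'' suggests. First I would take a point $z$ in the stratum $\calM_G$, so that $z$ corresponds to an $\calA$-stable curve $(C,p_1,\ldots,p_n)$ whose dual graph is $G$; by construction the number of nodes of $C$ equals $k=\#E(G)$. Applying Theorem \ref{thm_nodecoordinates}, there are formal coordinates $t_1,\ldots,t_N$ at $z$ with $\widehat{\calO}_{\calMbar_{g,\calA},z}\simeq\frako_k\llbracket t_1,\ldots,t_N\rrbracket$ such that, for each $1\leq i\leq k$, the locus where the $i$-th node persists is cut out by $t_i=0$.

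Next I would identify, in these coordinates, the formal-local picture of the stratum $\calM_G$ itself. A deformation of $C$ near $z$ keeps dual graph exactly $G$ precisely when none of the $k$ nodes is smoothed and no new node appears; since the coordinates $t_{k+1},\ldots,t_N$ are ``equisingular'' directions (smoothing no node by Theorem \ref{thm_nodecoordinates}) and, near $z$, no additional nodes can be created, the locus where the dual graph remains $G$ is formally-locally $\{t_1=\cdots=t_k=0\}$. Hence $\calM_G$ is formally-locally at $z$ a smooth substack of codimension $k=\#E(G)$, which is the assertion. Since the codimension is the same at every point of the (irreducible-on-each-stratum, or at least equidimensional) locally closed substack $\calM_G$, this computes $\codim\calM_G=\#E(G)$.

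The main obstacle is the second step: one must justify that no new node can appear in a small formal neighbourhood of $z$, i.e. that the only boundary components passing through $z$ are the $k$ ``node-persistence'' divisors $\{t_i=0\}$, and that each of the remaining coordinate directions is genuinely equisingular. This is where the full strength of Theorem \ref{thm_SNC} (normal crossings of the boundary, with exactly $k$ branches through $z$) is used: the boundary near $z$ is $\{t_1\cdots t_k=0\}$, its stratum of depth $k$ is $\{t_1=\cdots=t_k=0\}$, and $\calM_G$ is an open-and-closed piece of that stratum (distinguishing graphs with the same number of edges is harmless for the codimension count). I would phrase the argument so that it only invokes Theorem \ref{thm_nodecoordinates}/\ref{thm_SNC} plus the elementary observation that the number of edges of $G$ equals the number of nodes of $C$, keeping everything at the level of complete local rings and avoiding any further deformation-theoretic input.
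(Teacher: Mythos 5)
Your proposal is correct and follows essentially the same route as the paper's proof: apply Theorem \ref{thm_nodecoordinates} at a point of $\calM_G$, observe that the stratum is formally locally $\{t_1=\cdots=t_k=0\}$ with $k=\#E(G)$ the number of nodes, and conclude $\codim\calM_G=k$. (The paper also records a second, independent proof via the dimension count $\dim\widetilde{\calM}_G=\sum_v(3h(v)-3+n_v)$ for the clutching--gluing cover, but your argument matches the primary one.)
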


\begin{proof}
The number $k=\#E(G)$ of edges of $G_C$ corresponds to the number of nodes of an $\calA$-stable curve $C$ in $\calMbar_{g,\calA}$. By Theorem \ref{thm_nodecoordinates} the complete local ring $\widehat{\calO}_{\calM_{g,\calA},x}$ at a point $x=\big[(C,p_1,\ldots,p_n)\big]$ of $\calM_G$ is equal to $\frako_k\llbracket t_1,\ldots,t_N\rrbracket$, where the coordinates $t_1,\ldots,t_N$ can be chosen such that the locus of the closure of $\calM_G$ is given by $t_1\cdots t_k=0$. This implies 
\begin{equation*}
\codim\calM_G=k=\#E(G)\ .
\end{equation*}
\end{proof}

An alternative proof of Corollary \ref{cor_codim=numedges} can be found at the end of Section \ref{section_algebraicclutching&gluing} below.

\begin{remark}
Stratifications of $\calMbar_{g,\calA}$ parametrized by the combinatorial data associated to weighted stable curves have already appeared in \cite{BayerManin_weightedstablemaps}, \cite{AlexeevGuy_weightedstablemaps}, and \cite{MustataMustata_weightedstablemaps}, all of which deal with moduli spaces of weighted stable maps. The crucial difference between their constructions and our approach is that their discrete data also contains information on whether marked points agree. Taking only dual graphs, we conveniently "forget" this information in order to obtain the stratification induced by the normal crossing boundary $\calMbar_{g,\calA}-\calM_{g,\calA}$.
\end{remark}

\subsection{Clutching and gluing}\label{section_algebraicclutching&gluing}
In this section we study analogues of the clutching and gluing morphisms originally defined in \cite[Section 3]{Knudsen_projectivityII} for $\calMbar_{g,n}$. This construction is a special case of  \cite[Proposition 2.1.1]{BayerManin_weightedstablemaps}, where these maps are defined for moduli spaces of weighted stable maps. 

For a vertex $v$ of $G$ we denote by $\calA(v)$ the tuple 
\begin{equation*}
(a_{i_1},\ldots a_{i_k},1,\ldots,1)
\end{equation*}
consisting of those $a_i$ that correspond to legs $l_i$ emanating from $v$ and a $1$ for every flag of an edge incident to $v$. Moreover we write $n_v$ for the number of entries of $\calA(v)$, i.e. the number of legs and edges emanating from $v$. 

\begin{proposition}\label{prop_clutching&gluing}
For every weighted graph $G$ that is stable of type $(g,\calA)$ there is a natural \emph{clutching and gluing morphism}
\begin{equation*}
\phi_G\mathrel{\mathop:}\prod_{v\in V(G)}\calMbar_{h(v),\calA(v)}\longrightarrow\calMbar_G\subseteq\calMbar_{g,\calA}
\end{equation*}
of the moduli stacks that associates to a tuple consisting of stable curves $(C^v,p_1^v,\ldots,p_{n_v}^v)$ of type $\big(h(v),\calA(v)\big)$ the stable curve $(C,p_1,\ldots, p_n)$ obtained by identifying two marked points, whenever they correspond to two flags defining an edge of $G$. 
\end{proposition}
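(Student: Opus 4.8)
The plan is to construct the morphism $\phi_G$ in two stages, mirroring the two words in its name: first a \emph{clutching} step that, for each edge $e$ of $G$, glues together the two marked points corresponding to the flags of $e$, and then an \emph{assembly} step that fits these together over the product of the factors $\calMbar_{h(v),\calA(v)}$. Concretely, I would first reduce to the universal situation: over $\prod_{v\in V(G)}\calMbar_{h(v),\calA(v)}$ we have, for each vertex $v$, the pullback of the universal curve $\calC^v\to\calMbar_{h(v),\calA(v)}$ together with its sections $p_1^v,\ldots,p_{n_v}^v$, all of which are non-singular by definition of Hassett's spaces. Taking the disjoint union $\coprod_v \calC^v$ over the base $B=\prod_v\calMbar_{h(v),\calA(v)}$ and then forming, for each edge $e=\{f,f'\}$ of $G$ with $f$ a flag at $v$ and $f'$ a flag at $w$, the pushout that identifies the section $p^v_{f}$ with the section $p^w_{f'}$, produces a family $\calC\to B$ of nodal curves. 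The existence of such pushouts in the category of algebraic spaces (or stacks) follows from the standard gluing-along-closed-subschemes results — e.g. those underlying Knudsen's original clutching construction in \cite[Section 3]{Knudsen_projectivityII} — since we are gluing along disjoint sections, which are closed immersions with the same (trivial, or rather étale) structure.

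The second step is to check that the resulting family $(\calC,p_1,\ldots,p_n)\to B$, where the remaining sections $p_i$ are precisely those coming from legs of $G$, is $\calA$-stable, so that it is classified by a morphism $B\to\calMbar_{g,\calA}$. Connectedness and arithmetic genus $g$ of the fibers follow from a standard Mayer–Vietoris / Euler characteristic computation together with the genus formula \eqref{eq_genusgraph} and the hypothesis $g(G)=g$; here one uses that gluing two smooth points raises the arithmetic genus by one (the edge-contribution $b_1$) and that the genera $h(v)$ add up as in \eqref{eq_genusgraph}. For $\calA$-stability I would invoke Proposition \ref{prop_Astability}: it suffices to check that on each fiber the dual graph is stable of type $(g,\calA)$. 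But by construction the dual graph of a fiber $C$ is obtained from $G$ by replacing each vertex $v$ by the dual graph of the corresponding fiber $C^v$ of $\calC^v$ (which is stable of type $(h(v),\calA(v))$ again by Proposition \ref{prop_Astability}, applied to the factor) and re-attaching the edges of $G$ at the appropriate legs; the local stability inequality $2h(u)-2+|u|_E+|u|_\calA>0$ at a vertex $u$ lying over $v$ then follows from the corresponding inequality for $C^v$ with weight datum $\calA(v)$, because the flags of edges of $G$ incident to $v$ contribute weight $1$ each in $\calA(v)$ and contribute to $|u|_E$ after gluing — in both cases an identical positive contribution. This gives the morphism $\phi_G\colon B\to\calMbar_{g,\calA}$. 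Functoriality of the construction — i.e. that the assignment of the glued curve to a family is compatible with base change — makes this a morphism of stacks, and naturality with respect to contractions and automorphisms of $G$ is then formal.

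Finally, I would argue that $\phi_G$ factors through the closed substack $\calMbar_G$. Indeed, on the open dense locus of $B$ where each $C^v$ is smooth, the dual graph of the glued fiber is exactly $G$, so the image of that locus lies in $\calM_G$; since $B$ is irreducible whenever each factor is (or, in general, arguing on each irreducible component), the image of all of $B$ lies in the closure $\calMbar_G=\overline{\calM_G}$.

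The main obstacle I anticipate is the \emph{clutching step in the stacky/characteristic-$p$ setting}: making precise that the pushout of the universal curves along disjoint sections exists as a Deligne–Mumford stack and that it is flat and proper over $B$ with nodal geometric fibers. Over a field of characteristic zero one can lean directly on Knudsen, but here the statement is for $\calMbar_{g,\calA}$ over $\Spec\ZZ$, so one should either cite \cite[Proposition 2.1.1]{BayerManin_weightedstablemaps} — which does exactly this for weighted stable maps and specializes to our case — or redo the gluing using Olsson's stack $\calS_{g,n}$ of prestable curves (as in the proof of Theorem \ref{thm_nodecoordinates}), where the analogous clutching morphisms are constructed in full generality. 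I would take the former route, remarking only on how our setup is the relevant special case, so that the proof remains short and the delicate descent arguments are imported wholesale.
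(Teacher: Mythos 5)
Your proposal is correct and follows essentially the same route as the paper: glue the families along the sections corresponding to flags of an edge (using that these sections are disjoint from all others because they carry weight one), verify $\calA$-stability of the resulting fibers from the stability of the pieces and of $G$, check compatibility with base change, and lean on \cite[Proposition 2.1.1]{BayerManin_weightedstablemaps} for the delicate existence of the gluing — which is exactly the reference the paper invokes just before stating the proposition. Your version merely spells out in more detail the dual-graph bookkeeping via Proposition \ref{prop_Astability} and the factorization through $\calMbar_G$, which the paper leaves implicit.
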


\begin{proof}
Let $S$ be scheme and $(C^v)$  be a tuple of complete nodal curves over $S$ with sections $p_1^v,\ldots,p_n^v$ such that each $(C^v,p_1^v,\ldots,p_{n_v}^v)$ is stable of type $\big(h(v),\calA(v)\big)$. Then we can define a curve $C$ over $S$ by gluing the $C^v$ over two sections corresponding to two flags that are connected by an edge of $G$. Note that these sections do not intersect any other section, since they all have weight one and the $(C^v,p_1^v,\ldots,p_{n_v}^v)$ are stable of type $\big(h(v),\calA(v)\big)$. The resulting curve $(C,p_1,\ldots, p_n)$ over $S$ is stable of type $(g,\calA)$, since each $(C^v,p_1^v,\ldots,p_{n_v}^v)$ is stable of type $\big(h(v),\calA(v)\big)$ and the graph $\Gamma$ is stable of type $(g,\calA)$. This association commutes with arbitrary base changes $S'\rightarrow S$ and therefore defines a morphism of stacks. 
\end{proof}

\begin{corollary}\label{cor_clutching&gluing}\begin{enumerate}[(i)]
\item Suppose $(g_1,\calA_1)$ and $(g_2,\calA_2)$ are two weight data. Set $g=g_1+g_2$ and $\calA=\calA_1\cup\calA_2$. There is a natural \emph{clutching morphism} 
\begin{equation*}
\kappa=\kappa_{g_1,\calA_1,g_2,\calA_2}\mathrel{\mathop:} \calMbar_{g_1,\calA_1\sqcup\{1\}}\times\calMbar_{g_2,\calA_2\sqcup\{1\}}\longrightarrow \calMbar_{g,\calA}
\end{equation*}
that associates to a tuple consisting of the two $\calA_i\cup\{1\}$-stable curves $(C_i,p_1^i,\ldots,p_{n_i+1}^i)$ the $\calA$-stable curve $(C,p_1^1,\ldots, p_{n_1}^1,p_{1}^2,\ldots,p_{n_2}^2)$ obtained by identifying the two points $p_{n_1+1}^1$ and $p_{n_2+1}^2$ in a node. 
\item Fix an input datum $(g,\calA)$ with $g>0$. There is a natural \emph{gluing morphism} 
\begin{equation*}
\gamma\mathrel{\mathop:}\calMbar_{g-1,\calA\sqcup\{1,1\}}\longrightarrow\calMbar_{g,\calA}
\end{equation*} 
obtained by gluing together the last two marked points of an $\calA\cup\{1,1\}$-stable curve $(C,p_1,\ldots,p_{n+2})$ of genus $g-1$. 
\end{enumerate}\end{corollary}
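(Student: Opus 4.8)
The plan is to deduce both statements directly from Proposition \ref{prop_clutching&gluing} by exhibiting, in each case, a weighted graph $G$ that is stable of type $(g,\calA)$ for which the clutching and gluing morphism $\phi_G$ specializes to the asserted map.

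For part (i), I would let $G$ be the graph with two vertices $v_1,v_2$ of genera $h(v_1)=g_1$ and $h(v_2)=g_2$ joined by a single edge $e$, where the legs indexed by $\calA_1$ are attached to $v_1$ and those indexed by $\calA_2$ to $v_2$. Then $b_1(G)=\#E(G)-\#V(G)+1=1-2+1=0$, so $g(G)=g_1+g_2=g$ by \eqref{eq_genusgraph}. At $v_i$ one has $\vert v_i\vert_E=1$ and $\vert v_i\vert_\calA=\sum_{a\in\calA_i}a$, so the stability inequality $2g_i-2+1+\sum_{a\in\calA_i}a>0$ is exactly the statement that $(g_i,\calA_i\sqcup\{1\})$ is an input datum; this holds since $(g_i,\calA_i)$ is a weight datum and hence $2g_i-2+\sum_{a\in\calA_i}a>0>-1$. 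Thus $G$ is stable of type $(g,\calA)$, and since $\calA(v_i)=\calA_i\sqcup\{1\}$ and $h(v_i)=g_i$, Proposition \ref{prop_clutching&gluing} provides
\begin{equation*}
\phi_G\colon\calMbar_{g_1,\calA_1\sqcup\{1\}}\times\calMbar_{g_2,\calA_2\sqcup\{1\}}\longrightarrow\calMbar_G\subseteq\calMbar_{g,\calA},
\end{equation*}
which identifies the two marked points corresponding to the flags of $e$ in a node; this is the desired morphism $\kappa$.

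For part (ii), I would let $G$ be the graph with a single vertex $v$ of genus $h(v)=g-1$ carrying one loop, with the $n$ legs indexed by $\calA$ attached to $v$. Here $b_1(G)=1-1+1=1$, so $g(G)=1+(g-1)=g$. The point to be careful about is that a loop contributes $2$ to $\vert v\vert_E$, so the stability inequality at $v$ reads
\begin{equation*}
2(g-1)-2+2+\sum_{a\in\calA}a=2g-2+\sum_{a\in\calA}a>0,
\end{equation*}
which holds because $(g,\calA)$ is an input datum; so $G$ is stable of type $(g,\calA)$, and the hypothesis $g>0$ makes $(g-1,\calA\sqcup\{1,1\})$ a legitimate input datum. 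Since $\calA(v)=\calA\sqcup\{1,1\}$ and $h(v)=g-1$, Proposition \ref{prop_clutching&gluing} provides $\phi_G\colon\calMbar_{g-1,\calA\sqcup\{1,1\}}\to\calMbar_G\subseteq\calMbar_{g,\calA}$, gluing together the two marked points corresponding to the flags of the loop; this is the desired morphism $\gamma$.

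As both parts are direct instantiations of Proposition \ref{prop_clutching&gluing}, there is no genuine obstacle; the only step requiring attention is the genus-and-stability bookkeeping for the two chosen graphs, and in particular the convention that a loop counts twice in $\vert v\vert_E$, which is exactly what makes the one-vertex graph in (ii) stable of type $(g,\calA)$ rather than forcing a condition on $(g-1,\calA)$.
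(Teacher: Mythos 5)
Your proof is correct and follows exactly the paper's own route: both parts are obtained as special cases of Proposition \ref{prop_clutching&gluing} applied to the two-vertex one-edge graph in (i) and the one-vertex one-loop graph in (ii), with your extra genus-and-stability bookkeeping being a harmless elaboration. Note that your vertex weight $h(v)=g-1$ in part (ii) is the correct choice (so that $g(G)=b_1(G)+h(v)=1+(g-1)=g$), whereas the paper's proof states weight $g$, which is evidently a typo.
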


\begin{proof}
Both Part (i) and Part (ii) are special cases of Proposition \ref{prop_clutching&gluing}. For Part (i) we take the graph $G$ to consist of two vertices $v_1$ and $v_2$ with weights $g_1$ and $g_2$ connected by an edge and having $n_1$ or $n_2$ legs incident to $v_1$ or $v_2$ respectively. For Part (ii) the graph $G$ consists only one vertex with weight $g$, from which $n$ legs are emanating, and a loop. 
\end{proof}

Set
\begin{equation*}
\widetilde{\calM}_G=\prod_{v\in V(G)}\calM_{h(v),\calA(v)} 
\end{equation*}
and note that the clutching and gluing morphism $\phi_G$ restricts to a morphism 
\begin{equation*}
\widetilde{\calM}_G\rightarrow \calM_G \ .
\end{equation*}

\begin{proposition}\label{prop_MGstructure}
For a weighted graph $G$ that is stable of type $(g,\calA)$ the clutching and gluing morphism $\phi_G\mathrel{\mathop:}\widetilde{\calM}_G\rightarrow \calM_G$ induces an isomorphism
\begin{equation*}
\Big[\widetilde{\calM}_G\big/\Aut(G)\Big]\simeq\calM_G  \ .
\end{equation*}
\end{proposition}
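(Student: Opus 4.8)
The plan is to exhibit an explicit inverse to the stacky descent of $\phi_G$, constructed by partial normalization of the universal curve over $\calM_G$. The first step is to check that $\phi_G$ is invariant under the $\Aut(G)$-action on $\widetilde{\calM}_G$, which permutes the factors via the action of $\Aut(G)$ on $V(G)$ and, inside a factor, permutes the marked points coming from edge-flags via the action on $F(G)$ while fixing those coming from legs, since the induced map $L(G)\to L(G)$ is the identity. For $\gamma\in\Aut(G)$, gluing the tuple $\gamma\cdot(C^v,p^v_\bullet)$ along the edge-pattern of $G$ yields a curve canonically isomorphic to $\phi_G\big((C^v,p^v_\bullet)\big)$, because the combinatorial pattern of identifications is preserved by $\gamma$; these isomorphisms assemble into a descent datum, so $\phi_G$ factors through a morphism $\bar\phi_G\colon\big[\widetilde{\calM}_G/\Aut(G)\big]\to\calM_G$. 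That the image lands in $\calM_G$ rather than merely $\calMbar_G$ is exactly the restriction $\widetilde{\calM}_G\to\calM_G$ already recorded: by Proposition \ref{prop_Astability} the dual graph of the glued curve is $G$ on the nose.

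Next I would construct $\psi_G\colon\calM_G\to\big[\widetilde{\calM}_G/\Aut(G)\big]$. Let $\pi\colon\calC\to\calM_G$ be the universal $\calA$-stable curve over the stratum. Its singular locus $\mathrm{Sing}(\pi)$ is finite étale over $\calM_G$ of degree $\#E(G)$, since every fibre has exactly that many nodes; étale-locally on $\calM_G$ it is therefore a disjoint union of sections. After such an étale base change $U\to\calM_G$, normalizing $\calC_U$ along these node-sections (i.e. relative normalization) separates the branches and yields a smooth curve $\widetilde{\calC}_U\to U$ whose fibrewise connected components, decorated with the marked sections of $\calC$ and the pair of sections of $\widetilde{\calC}_U$ lying over each node, form families of smooth pointed curves. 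A further étale cover over which the bijection between these components and the vertices of $G$, and the bijections between the node-sections over a component and the edge-flags at the corresponding vertex, can all be chosen compatibly with $G$ then produces a morphism $U\to\widetilde{\calM}_G$; the target $\calM_{h(v),\calA(v)}$ is legitimate because, by Proposition \ref{prop_Astability} applied componentwise, $\calA$-stability of $G$ at $v$ is precisely ampleness of the twisted canonical divisor on the $v$-component. The set of choices made forms a torsor under $\Aut(G)$, so these local morphisms glue to the desired $\psi_G$.

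Finally I would check that $\bar\phi_G$ and $\psi_G$ are mutually inverse. The composite $\psi_G\circ\bar\phi_G$ is the identity tautologically: normalizing a curve glued from smooth pieces along prescribed pairs of points and then re-gluing recovers the original data. The composite $\bar\phi_G\circ\psi_G$ is the identity because a reduced nodal curve is the pushout of its normalization along the preimages of its nodes (a Ferrand-type gluing), so re-gluing the partial normalization reconstructs $\calC$. To upgrade this from an equivalence on objects to an isomorphism of stacks it remains to match automorphisms: the natural homomorphism $\Aut(C,p_\bullet)\to\Aut(G_C)=\Aut(G)$ has kernel $\prod_{v}\Aut\big(\widetilde{C}_v,p^v_\bullet\big)$ — an automorphism inducing the identity on $G$ preserves each component and each branch (including both branches of a loop, whose flags it fixes), hence restricts to a compatible tuple of automorphisms of the components, and conversely any such tuple is compatible with the gluing because it fixes each $p^v_i$ — and this is exactly the automorphism group of the corresponding object of $\big[\widetilde{\calM}_G/\Aut(G)\big]$, the extensions by $\Aut(G)$ agreeing on both sides as well. (This mirrors the treatment of $\calMbar_{g,n}$ in \cite[Section 3]{Knudsen_projectivityII} and \cite[Section XII.10]{ArbarelloCornalbaGriffiths_moduliofcurves}.)

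The main obstacle I expect is carrying out the second and third steps \emph{in families}: turning the normalization-and-relabeling construction into an honest morphism of algebraic stacks — in particular controlling the $\Aut(G)$-torsor of relabelings and the étale descent — and verifying that the pushout description of a nodal curve is compatible with base change, so that $\bar\phi_G\circ\psi_G=\mathrm{id}$ holds over an arbitrary base rather than just on geometric points. Some care is also needed at vertices carrying loops, where normalization splits the node into two weight-one marked points and the flag-swapping automorphism of the loop corresponds to the branch-swapping automorphism of the curve; but the conventions already in force ($\vert v\vert_E$ counting loops with multiplicity two, and $\calA(v)$ assigning a $1$ to every edge-flag) make this bookkeeping routine.
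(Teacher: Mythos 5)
Your argument is correct, but it runs in the opposite direction from the paper's. The paper never constructs an inverse to $\phi_G$: it compares groupoid presentations. Starting from an $\Aut(G)$-invariant surjective \'etale atlas $Y_0\rightarrow\widetilde{\calM}_G$ built from Hassett's ``exhausting families'' (following \cite[Example 8.15]{ArbarelloCornalbaGriffiths_moduliofcurves} and \cite[Proposition 10.11]{ArbarelloCornalbaGriffiths_moduliofcurves}), it checks that the composite $Y_0\rightarrow\calM_G$ is again surjective \'etale --- locally $\prod_v U_v$ maps isomorphically onto a branch of the locus $U_G$ of curves with dual graph $G$ inside a slice $U$ for $\calM_{g,\calA}$ --- and then identifies $\Aut(G)\times Y_0\times_{\widetilde{\calM}_G}Y_0$ with $Y_0\times_{\calM_G}Y_0$ via isomorphisms preserving the dual graph. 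This localizes all the work in the deformation theory already established (Theorem \ref{thm_nodecoordinates}) and sidesteps exactly the issues you flag as the main obstacles: it never needs relative normalization of the universal curve, the $\Aut(G)$-torsor of labelings as a global object, or compatibility of Ferrand pushouts with arbitrary base change. Your route, by contrast, produces an explicit quasi-inverse $\psi_G$ and makes the matching of automorphism groups (kernel $\prod_v\Aut(\widetilde{C}_v,p^v_\bullet)$, extension by the stabilizer in $\Aut(G)$) completely transparent, at the cost of more foundational input on normalization in families --- input which is standard from Knudsen's clutching construction but which the paper deliberately avoids re-verifying. Both arguments are sound; yours is more self-contained at the level of moduli interpretation, the paper's is shorter given the local coordinates it has already built.
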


Our proof of Proposition \ref{prop_MGstructure} is a generalization of the proof of \cite[Proposition 10.11]{ArbarelloCornalbaGriffiths_moduliofcurves}.

\begin{proof}
We are going to show that both $\Big[\widetilde{\calM}_G\big/\Aut(G)\Big]$ and $\calM_G$ have the same groupoid presentation. 

By the construction in \cite[Example 8.15]{ArbarelloCornalbaGriffiths_moduliofcurves} we can find a $\Aut(G)$-invariant surjective \'etale morphism $s,t\mathrel{\mathop:}U\rightarrow \widetilde{\calM}_G$. In this case a groupoid presentation of $\Big[\widetilde{\calM}_G\big/\Aut(G)\Big]$ is given by $Y_1\rightrightarrows Y_0$, where 
\begin{equation*}
Y_1=\Aut(G)\times Y_0\times_{\calM_G}Y_0 \ .
\end{equation*}

The \'etale atlas $Y_0$ is \'etale locally isomorphic to a product $\prod_{v\in V(G)}U_v$, where $U_v$ are local slices of the 'exhausting family' of $\calM_{h(v),\calA(v)}$ around curves $C_v$ as introduced in \cite[Section 3.4]{Hassett_weightedmoduli}. The clutching and gluing map is induced by the morphism $\prod_{v\in V(G)} U_v\rightarrow U$ into a slice $U$ of the 'exhausting family' of $\calM_{g,\calA}$ around $C=\phi_G\big((C_v)_{v\in V(G)}\big)$ that is determined by isomorphically mapping $U_v$ to one of the branches of $U_G$, the locus in $U$ parametrizing curves with dual graph $G$. In particular the composition $Y_0\rightarrow\calM_G$ is surjective and \'etale.

The morphism $Y_0\rightarrow\widetilde{\calM}_G$ gives rise to a family $\eta\mathrel{\mathop:}\calC\rightarrow Y_0$ of curves with dual graphs equal to $G$. In this case we have natural isomorphisms
\begin{equation*}
Y_1=\Aut(G)\times Y_0\times_{\widetilde{\calM}_G} Y_0\simeq\Aut(G)\times \Isom_{Y_0\times Y_0}^G(s^\ast\eta,t^\ast \eta) \simeq Y_0\times_{\calM_G} Y_0 \ ,
\end{equation*}
where $\Isom^G$ denotes isomorphisms preserving the dual graph $G$.  
\end{proof}

\begin{proof}[Alternative proof of Corollary \ref{cor_codim=numedges}]
The Betti number $b_1(G)$ can be can calculated by $b_1(G)=\#E(G)-\#V(G)+1$ and by Proposition \ref{prop_MGstructure} we have $\dim\calM_G=\dim\widetilde{\calM}_G$. Using $g=b_1(G)+\sum_v h(v)$ as well as $\sum_vn_v=n+2\cdot\#E(G)$, we therefore obtain:
\begin{equation*}\begin{split}
\dim\calM_G=\dim\widetilde{\calM}_G&=\sum_{v\in V(G)}3h(v)-3+n_v \\
&=3\big(b_1(G)+\sum_{v\in V(G)}h(v)\big)-3\big(\#V(G)-b_1(G)\big) +\sum_{v\in V(G)}n_v\\
&=3g-3+n +2\cdot\#E(G) -3\cdot\#E(G)\\
&=\dim\calMbar_{g,\calA} -\#E(G) \ .
\end{split}\end{equation*}
\end{proof}

\section{Deformation retraction onto the non-Archimedean skeleton}

The goal of this section is to prove our main result, Theorem \ref{thm_modulartrop}. We begin with a quick review of the construction of the deformation retraction onto the skeleton of a simple toroidal scheme from \cite[Section 3.1]{Thuillier_toroidal} in Section \ref{section_skeletonsimple} and more generally of a toroidal Deligne-Mumford stack from \cite[Section 6]{AbramovichCaporasoPayne_tropicalmoduli} in Section \ref{section_skeletonstack}. Section \ref{section_proof} contains the proof of Theorem \ref{thm_modulartrop}.

\subsection{Skeletons of simple toroidal schemes}\label{section_skeletonsimple}

Suppose that $X_0\hookrightarrow X$ is a simple toroidal embedding, that is an open embedding such that for every point $x\in X$ there is an open neighborhood $U$ of $x$ and an \'etale morphism $\gamma\mathrel{\mathop:}U\rightarrow Z$ into a toric variety $Z$ with big torus $T$ such that $\gamma^{-1}(T)=X_0\cap U$. In \cite[Section 3.2]{Thuillier_toroidal} Thuillier defines a strong deformation retraction $\mathbf{p}$ from the non-Archimedean analytic space $X^\beth$ as defined in \cite[Section 1]{Thuillier_toroidal} onto a closed subset $\frakS(X)$ of $X^\beth$, the \emph{non-Archimedean skeleton} of $X$. 

We denote by $S_+$ the sheaf monoids associating to an open subset $U$ of $X$ the monoid $S_+(U)$ of effective Cartier divisors with support fully contained in $X-X_0$. As shown in \cite[Section 3.1]{Thuillier_toroidal} the natural stratification of the toric varieties $Z$ by $T$-orbits lifts to give a well-defined stratification of $X$ by locally closed subsets, henceforth called the \emph{toroidal strata} of $X$. Note that the unique open subset of this stratification is $X_0$. Denote by $F_X$ the set of generic points of the toroidal strata together with the induced topology and endowed with the restriction of the $S_+$. By \cite[Proposition 9.2]{Kato_toricsing} the monoidal space $F_X$ has the structure of what is called a Kato fan in \cite{Ulirsch_functroplogsch} and comes with a natural characteristic morphism $\phi_X\mathrel{\mathop:}(X,S_+)\rightarrow F_X$ sending every point in a toroidal stratum to its generic point. We refer the reader to \cite{Ulirsch_functroplogsch} for details on this construction.

In particular, by \cite[Theorem 1.2]{Ulirsch_functroplogsch} Thuillier's strong deformation retraction can be described as follows: 
\begin{itemize}
\item The skeleton $\frakS(X)$ is naturally homeomorphic to the set $\Sigmabar_{X}=F_X(\Rbar_{\geq 0})$ of $\Rbar_{\geq 0}=\R_{\geq 0}\sqcup\{\infty\}$-valued points.
\item A point $x$ in $X^\beth$ gives rise to a morphism $\underline{x}\mathrel{\mathop:}\Spec R\rightarrow (X,S_+)$ of monoidal spaces, where $R$ is some non-Archimedean extension of $k$, and the image of $x$ in $\Sigmabar_X$ is given by the composition
\begin{equation*}\begin{CD}
\Spec\Rbar_{\geq 0}@>\val^\#>>\Spec R@>\underline{x}>>(X,S_+)@>\phi_X>>F_X ,
\end{CD}\end{equation*}
where $\val^\#$ is the morphism induced by the valuation of $R$. 
\end{itemize}

\subsection{Skeletons of toroidal Deligne-Mumford stacks} \label{section_skeletonstack}
Suppose now that $\calX_0\hookrightarrow\calX$ is toroidal embedding of separated Deligne-Mumford stacks of finite type over $k$, i.e. an open embedding of Deligne-Mumford stacks admitting a surjective \'etale morphism $U\rightarrow\calX$ such that the base change $U_0\hookrightarrow U$ is a simple toroidal embedding. The toroidal stratification of $U$ induces a natural toroidal stratification of $\calX$ by locally closed substacks $\calE$ that does not depend on the choice of $U$. We write $S_+$ for the \'etale sheaf of effective Cartier-divisors with support in $\calX-\calX_0$.

The Keel-Mori Theorem \cite{KeelMori_groupoidquotients} implies that the stack $\calX$ has a coarse moduli space $X$, which has the structure of separated algebraic space. By \cite{ConradTemkin_algspaces} the analytification $X^{an}$ of $X$ exists in the category of analytic spaces and, following \cite[Definition 6.1.2]{AbramovichCaporasoPayne_tropicalmoduli}, we can define $X^\beth$ as the subspace of $X^{an}$ that is locally given by unit balls in $X^{an}$. Note that the valuative criterion for properness yields $X^\beth=X^{an}$, whenever $\calX$ is proper over $k$. 

In Section \cite[Section 6.1]{AbramovichCaporasoPayne_tropicalmoduli} the authors extend Thuillier's \cite{Thuillier_toroidal} construction and show that this datum defines a strong deformation retraction $\mathbf{p}$ of $X^\beth$ onto a closed subset $\mathfrak{S}(\calX)$ of $X^\beth$, which is again called the \emph{non-Archimedean skeleton} of $\calX$. 

Consider now the category $\calH_\calX$ defined as follows:
\begin{itemize}
\item Its objects are the generic points of the toroidal strata of $\calX$.
\item The morphisms in $\calH_\calX$ are generated by the natural homomorphisms $(S_+)_\eta\rightarrow (S_+)_\xi$, whenever $\eta$ specializes to $\xi$, and the \emph{monodromy groups} $H_\xi$ at $\xi$. \end{itemize} 

Recall that the sheaf $S_+$ is \'etale locally trivial on the toroidal strata of $\calX$ by \cite[Proposition 6.2.1]{AbramovichCaporasoPayne_tropicalmoduli}. The \emph{monodromy group} $H_\xi$ consists of those automorphisms of $(S_+)_\eta$ that are induced by the operation of $\pi_1^{et}(\calE_\xi,\xi)$ on $(S_+)_\xi$, where $\calE_\xi$ is the unique toroidal stratum containing $\xi$.

There is a natural functor $\Sigma\mathrel{\mathop:}\calH_\calX\rightarrow \mathbf{RPCC}$ given by 
\begin{itemize}
\item the association $\xi\rightarrow\sigma_\xi=\Hom\big((S_+)_\xi,\R_{\geq 0}\big)$,
\item the embedding of a face $\sigma_\eta\hookrightarrow\sigma_\xi$, whenever $\eta$ specializes to $\xi$, and
\item an automorphism of $\sigma_\xi$ for every automorphism of the monodromy group $H_\xi$.
\end{itemize}

This functor naturally extends to a functor $\Sigmabar$ into the category of extended rational polyhedral complexes, given by $\xi\rightarrow \sigma_\xi=\Hom\big((S_+)_\xi,\Rbar_{\geq 0}\big)$.

We can now rephrase \cite[Proposition 6.2.6]{AbramovichCaporasoPayne_tropicalmoduli} as follows:

\begin{proposition}[ \cite{AbramovichCaporasoPayne_tropicalmoduli} Proposition 6.2.6]\label{prop_skeleton=colimit} The skeleton $\mathfrak{S}(\calX)$ is the colimit 
\begin{equation*}
\mathfrak{S}(\calX)=\lim_{\longrightarrow} \sigmabar_\xi \ ,
\end{equation*}
taken over the category $\calH_\calX$. Let $x\in X^\beth$ be represented by a morphism $\underline{x}\mathrel{\mathop:}\Spec R\rightarrow \calX$ from a valuation ring extending $k$ and write $p_x$ for the image of the closed point in $\calX$. Then $(S_+)_{p_x}=(S_+)_{\xi_x}$ for the generic point $\xi_x$ of the unique stratum containing $p_x$ and the image $\mathbf{p}(x)$ in $\sigmabar_{\xi_x}=\Hom\big((S_+)_{\xi_x},\Rbar_{\geq 0}\big)$ is given by the composition 
\begin{equation*}\begin{CD}
\Spec \Rbar_{\geq 0}@>\val^\#>>\Spec R @>>>\Spec \widehat{\calO}_{\calX,p_x}@>>>\Spec (S_+)_{p_x} \ .
\end{CD}\end{equation*}
\end{proposition}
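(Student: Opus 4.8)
This statement is a translation of \cite[Proposition 6.2.6]{AbramovichCaporasoPayne_tropicalmoduli} into the language of Kato fans and the sheaf $S_+$ introduced in Section \ref{section_skeletonsimple}, so the plan is essentially to set up the dictionary between the two formulations and then invoke their result. First I would recall the simple toroidal case treated by Thuillier: for a simple toroidal embedding $X_0\hookrightarrow X$, combining \cite[Theorem 1.2]{Ulirsch_functroplogsch} with \cite[Proposition 9.2]{Kato_toricsing} identifies $\frakS(X)$ with $F_X(\Rbar_{\geq 0})$, where the Kato fan $F_X$ has, at the generic point $\xi$ of a toroidal stratum, the cone $\sigma_\xi=\Hom\big((S_+)_\xi,\Rbar_{\geq 0}\big)$; the cospecialization maps $(S_+)_\eta\to(S_+)_\xi$ given by restriction of effective boundary Cartier divisors dualize to the face embeddings $\sigma_\eta\hookrightarrow\sigma_\xi$, and the retraction is the one described at the end of Section \ref{section_skeletonsimple}. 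For a possibly non-generic point $p$ lying in the stratum with generic point $\xi$ one has $(S_+)_p=(S_+)_\xi$, because $S_+$ is étale-locally constant along each stratum, and the characteristic morphism $\phi_X$ sends $p$ to $\xi$; this is exactly the content of the displayed composition in the proposition, read in the simple toroidal setting.

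Next I would pass to the stack $\calX$ using a surjective étale atlas $U\to\calX$ for which $U_0\hookrightarrow U$ is a simple toroidal embedding. In \cite[Section 6]{AbramovichCaporasoPayne_tropicalmoduli} the skeleton $\frakS(\calX)$ is obtained by gluing $\frakS(U)$ along $\frakS(U\times_\calX U)$, which on the level of cone complexes amounts to passing to the colimit of the associated diagram of cones over the groupoid; the automorphisms that get introduced on the cone $\sigma_\xi$ over a generic point $\xi$ of a stratum of $\calX$ are precisely those coming from the action of $\pi_1^{et}(\calE_\xi,\xi)$ on $(S_+)_\xi$, i.e. the monodromy group $H_\xi$ of \cite[Proposition 6.2.1]{AbramovichCaporasoPayne_tropicalmoduli}. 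Unwinding this shows that the resulting colimit is canonically $\varinjlim\sigmabar_\xi$ over the category $\calH_\calX$, since $\calH_\calX$ has exactly these objects (generic points of strata), these morphisms (cospecializations and monodromy), and the functor $\Sigmabar$ records exactly these cones, faces and automorphisms.

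Finally, for the explicit form of $\mathbf{p}(x)$: given $x\in X^\beth$ represented by $\underline{x}\mathrel{\mathop:}\Spec R\to\calX$, lifting $\underline{x}$ through the atlas and using that $\widehat{\calO}_{\calX,p_x}$ is étale-locally the completed local ring of $U$ at a point over $p_x$ reduces the computation of $\mathbf{p}(x)$ to the simple toroidal case, where it is given by the composition $\Spec\Rbar_{\geq 0}\xrightarrow{\val^\#}\Spec R\to\Spec\widehat{\calO}_{\calX,p_x}\to\Spec(S_+)_{p_x}$; the identification $(S_+)_{p_x}=(S_+)_{\xi_x}$ from the first step then places this point in $\sigmabar_{\xi_x}=\Hom\big((S_+)_{\xi_x},\Rbar_{\geq 0}\big)$, as claimed.

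The only genuine work — and the place where one must be careful — is the second step: checking that the indexing category used by \cite{AbramovichCaporasoPayne_tropicalmoduli} to build the generalized cone complex, together with its monodromy data, agrees on the nose with $\calH_\calX$ and the functor $\Sigmabar$, and that this identification is compatible with the retraction maps on both sides. Everything else is a bookkeeping translation of results already available in \cite{Thuillier_toroidal}, \cite{Ulirsch_functroplogsch}, \cite{Kato_toricsing}, and \cite{AbramovichCaporasoPayne_tropicalmoduli}.
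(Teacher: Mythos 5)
The paper offers no proof of this statement at all: it is presented verbatim as a rephrasing of \cite[Proposition 6.2.6]{AbramovichCaporasoPayne_tropicalmoduli}, with the translation into the language of $S_+$ and the category $\calH_\calX$ left implicit in the material recalled from \cite{Thuillier_toroidal}, \cite{Ulirsch_functroplogsch}, and \cite{Kato_toricsing}. Your proposal correctly identifies this and supplies exactly the dictionary the paper takes for granted, so it is consistent with (indeed, more detailed than) the paper's own treatment.
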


\begin{remark}
Suppose that $\calX$ is a proper toroidal Deligne-Mumford stack of finite type over $k$. By \cite[Section 1.5]{Ulirsch_nonArchstacks} the skeleton $\frakS(\calX)$ is actually a deformation retract of the underlying topological space $\vert \calX^{an}\vert$ of the analytic stack $\calX^{an}$ associated to $\calX$. 
\end{remark}

\subsection{The skeleton of $\overline{\mathcal{M}}_{g,\mathcal{A}}$}\label{section_proof}
By Theorem \ref{thm_SNC} the open embedding $\calM_{g,\calA}\hookrightarrow\calMbar_{g,\calA}$ defines  a toroidal structure on $\calMbar_{g,\calA}$. Note the toroidal stratification is exactly the stratification of $\calMbar_{g,\calA}$ by dual graphs introduced in Section \ref{section_stratification}. Denote by $\xi_G$ the generic point of the stratum $\calM_G$. The following Lemma \ref{lemma_pi1=Aut} is a generalization of \cite[Proposition 7.2.1]{AbramovichCaporasoPayne_tropicalmoduli}.

\begin{lemma}\label{lemma_pi1=Aut}
The association $G\mapsto \xi_G$ defines a natural equivalence between the categories $(\calG_{g,\calA})^{op}$ and $\calH_{\calMbar_{g,\calA}}$.
\end{lemma}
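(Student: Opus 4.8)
The plan is to construct a functor in each direction and verify that they are mutually quasi-inverse. First I would define a functor $\Phi\colon(\calG_{g,\calA})^{op}\to\calH_{\calMbar_{g,\calA}}$ on objects by $G\mapsto\xi_G$, the generic point of the toroidal stratum $\calM_G$; this is well-defined and a bijection on objects because, by Corollary~\ref{cor_codim=numedges} and the discussion of the stratification in Section~\ref{section_stratification}, the strata of $\calMbar_{g,\calA}$ are precisely indexed by isomorphism classes of $\calA$-stable weighted graphs $G$ of genus $g$ with $n$ legs. For the morphisms I would use the two generating types in each category: weighted edge contractions $\pi\colon G\to G'$ must go to specialization morphisms $\xi_{G'}\rightsquigarrow\xi_G$ (note the variance: contracting an edge passes to a deeper boundary stratum in the closure, so $\calM_{G'}\subseteq\overline{\calM}_G$, matching the $(S_+)_{\eta}\to(S_+)_\xi$ maps of $\calH_\calX$), and the automorphisms $\Aut(G)$ must be identified with the monodromy groups $H_{\xi_G}$.

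The substantive input is the computation of the monodromy group $H_{\xi_G}$. Here I would invoke Proposition~\ref{prop_MGstructure}, which gives $\calM_G\simeq\big[\widetilde{\calM}_G/\Aut(G)\big]$ with $\widetilde{\calM}_G=\prod_{v\in V(G)}\calM_{h(v),\calA(v)}$ a scheme (or at least a stack whose coarse structure is geometrically connected and whose relevant fundamental group acts trivially on edges). The étale sheaf $S_+$ on $\calM_G$ has stalk $(S_+)_{\xi_G}=\N^{E(G)}$, and by Proposition~6.2.1 of \cite{AbramovichCaporasoPayne_tropicalmoduli} it is étale-locally trivial on $\calM_G$; the monodromy action of $\pi_1^{et}(\calM_G,\xi_G)$ on $\N^{E(G)}$ factors through the permutation action on the set of edges $E(G)$. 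Using the groupoid presentation $Y_1\rightrightarrows Y_0$ from the proof of Proposition~\ref{prop_MGstructure}, with $Y_1=\Aut(G)\times\Isom^G_{Y_0\times Y_0}(s^\ast\eta,t^\ast\eta)$, one sees that the only monodromy on the branches of the boundary divisor — equivalently on the edge set — comes from the $\Aut(G)$-factor, because the $\widetilde{\calM}_G$-factor acts trivially on the discrete data $E(G)$ (isomorphisms in $\Isom^G$ preserve the dual graph $G$ and hence fix each edge). Therefore the natural map $\Aut(G)\to H_{\xi_G}\subseteq\Aut\big((S_+)_{\xi_G}\big)=\Aut(\N^{E(G)})=\frakS_{E(G)}$ is surjective onto the monodromy, and I would check it is also injective by exhibiting, for each nontrivial $\gamma\in\Aut(G)$, a point of $\calM_G$ whose curve has trivial automorphisms compatible with $\gamma$ but on which $\gamma$ permutes the nodes nontrivially (a genericity argument on the exhausting family, exactly as in \cite[Proposition~7.2.1]{AbramovichCaporasoPayne_tropicalmoduli}); alternatively one reads injectivity directly off the groupoid presentation.

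With the object bijection and the identification $\Aut(G)\xrightarrow{\sim}H_{\xi_G}$ in hand, I would assemble $\Phi$ on all of $(\calG_{g,\calA})^{op}$ — since both categories have all morphisms generated by contractions/specializations together with these automorphism groups, and the compatibility relations between an automorphism and a contraction match the relations between a monodromy element and a specialization — and define the quasi-inverse $\Psi$ by sending $\xi$ to the dual graph of the curve corresponding to a geometric point of the stratum $\calE_\xi$, on morphisms reversing the above. Full faithfulness then amounts to the statement that $\Hom_{\calG_{g,\calA}}(G,G')$ and $\Hom_{\calH}(\xi_{G'},\xi_G)$ are both the set of (compositions of contractions realizing $G'$ as a contraction of $G$) modulo the respective automorphism actions, which follows formally once objects and automorphism groups are matched. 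I expect the main obstacle to be precisely the monodromy computation — specifically, ruling out any "hidden" monodromy coming from the non-simply-connected factors $\calM_{h(v),\calA(v)}$ of $\widetilde{\calM}_G$, which requires knowing that the $S_+$-torsor structure on a neighborhood of $\xi_G$ sees only the combinatorics of $G$ and not the moduli of the components; this is where the precise form of the slices of the exhausting family from \cite[Section~3.4]{Hassett_weightedmoduli}, as used in Proposition~\ref{prop_MGstructure}, does the real work.
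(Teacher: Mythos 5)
Your proposal follows essentially the same route as the paper: the contraction/specialization matching is noted first, and the substantive step is the computation of the monodromy group $H_{\xi_G}$ via the Galois cover $\widetilde{\calM}_G\rightarrow\calM_G$ of Proposition~\ref{prop_MGstructure}, using that the pullback of $S_+$ to $\widetilde{\calM}_G$ has trivial monodromy so that the $\pi_1^{et}$-action factors through $\Aut(G)$. The paper's proof is a terser version of exactly this argument; your additional surjectivity/injectivity checks are refinements it leaves implicit.
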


\begin{proof}
Note first that weighted graph contraction $G'\rightarrow G$ are in an unique order-reversing one-to-one correspondence with the specialization relations $\xi_G\rightarrow\xi_{G'}$. Therefore it is enough to show that the image of $\pi_1^{et}(\calMbar_{g,\calA},\xi_G)$ acting on the the $(S_+)_{\xi_G}$ is precisely $\Aut(G)$. Consider the Galois cover $\widetilde{\calM}_G\rightarrow\calM_G$. The operation of $\pi_1^{et}(\calMbar_{g,\calA},\xi_G)$ on the sheaf pullback of $S_+$ to $\widetilde{M}_G$ is trivial and it therefore factors through its quotient $\Aut(G)$.
\end{proof}

\begin{proof}[Proof of Theorem \ref{thm_modulartrop}]
Recall that $\Mbar_{g,\calA}^{trop}$ is defined as the colimit
\begin{equation*}
\Mbar_{g,\calA}^{trop}=\lim_{\longrightarrow} \sigmabar_G
\end{equation*}
taken over the category $(\calG_{g,\calA})^{op}$ and that by Proposition \ref{prop_skeleton=colimit} the skeleton $\frakS(\calMbar_{g,\calA})$ is given as the colimit
\begin{equation*}
\mathfrak{S}(\calMbar_{g,\calA})=\lim_{\longrightarrow} \sigmabar_\xi 
\end{equation*}
taken over the category $\calH_{\calMbar_{g,\calA}}$. Therefore Lemma \ref{lemma_pi1=Aut} immediately implies that there is an isomorphism
\begin{equation*}
J_{g,\calA}\mathrel{\mathop:}\Mbar_{g,\calA}^{trop}\longrightarrow\mathfrak{S}(\calMbar_{g,\calA})  \ .
\end{equation*}
of generalized extended cone complexes.

We finally show that the strong deformation retraction $\mathbf{p}\mathrel{\mathop:}\Mbar_{g,\calA}^{an}\rightarrow\mathfrak{S}(\calMbar_{g,\calA})$ can be given a modular interpretation as stated in the introduction. By the valuative criterion of properness a point $x\in \Mbar_{g,\calA}^{an}$ can be represented by a morphism $\Spec R\rightarrow\calMbar_{g,\calA}$, which, in turn, gives rise to a $(g,\calA)$-stable curve $\calC_x\rightarrow \Spec R$ over $R$. Denote the dual graph of its special fiber $\calC_s$ by $G_x$ and the image of the closed point in $\Spec R$ in $\calMbar_{g,\calA}$ by $p_x$. 

By Theorem \ref{thm_nodecoordinates} we can choose coordinates $t_1,\ldots, t_N$ in $\widehat{\calO}_{\calMbar_{g,\calA},p_x}$ such that the locus, where $\calC_s$ remains singular is given by $t_1\ldots t_k=0$. In formal coordinates we can describe $\calC$ around a node $q_i$ of $\calC_s$ by $xy=f_i$, where the $f_i\in R$ are precisely the images of $t_i$ in $R$. Now both the deformation retraction $\mathbf{p}$ and $\trop_{g,\calA}$ are given by associating to $x$ the element in $\mathfrak{S}(\calMbar_{g,\calA})=\Mbar_{g,\calA}^{trop}$ represented by $\big(\val(f_1),\ldots,\val(f_k),0,\ldots, 0\big)$ in $\sigmabar_{G_x}$. This shows $\mathbf{p}(x)=\trop_{g,\calA}(x)$ and finishes the proof of Theorem \ref{thm_modulartrop}.
\end{proof}

\section{Tropical tautological maps}\label{section_troptaut}

The purpose of this section is to define tropical analogues of the tautological maps between the moduli spaces $\calMbar_{g,\calA}$ generalizing the constructions in \cite[Section 8]{AbramovichCaporasoPayne_tropicalmoduli}. We require the tropical tautological maps to commute with the tropicalization map
\begin{equation*}
\trop_{g,\calA}\mathrel{\mathop:}\Mbar_{g,\calA}^{an}\longrightarrow\Mbar_{g,\calA}^{trop}\end{equation*}
as a basic principle to justify that our definitions make sense.

\subsection{Forgetful and reduction morphisms}\label{section_forgetful&reduction}

Fix an input datum $(\calA,g)$ and let $\calB=(b_1,\ldots,b_n)$ be another tuple of weights such that $b_i\leq a_i$ for all $1\leq i\leq n$. In \cite[Theorem 4.1]{Hassett_weightedmoduli} Hassett constructs a natural birational \emph{reduction morphism} 
\begin{equation*}
\rho_{\calA,\calB}\mathrel{\mathop:}\calMbar_{g,\calA}\longrightarrow\calMbar_{g,\calB}
\end{equation*}
that takes an element $(C,p_1,\ldots, p_n)$ of $\calMbar_{g,\calA}$ and collapses all the components along which the divisor $K_C=b_1p_1+\ldots+b_np_n$ fails to be ample. 

Moreover consider a subset $\calA'=\{a_{i_1},\dots,a_{i_r}\}\subseteq\calA$ such that $2g-2+a_{i_1}+\ldots +a_{i_r}>0$. By \cite[Theorem 4.3]{Hassett_weightedmoduli} there is a natural \emph{forgetful morphism}
\begin{equation*}
\phi_{\calA,\calA'}\mathrel{\mathop:}\calMbar_{g,\calA}\longrightarrow\calMbar_{g,\calA'} 
\end{equation*}
that can be described by associating to an $\calA$-stable curve $(C,p_1,\ldots, p_n)$ in $\calMbar_{g,\calA}$ the curve $\phi_{\calA,\calA'}(C,p_1,\ldots, p_n)$ given by deleting the marked points $p_i$ with $i\notin\calA'$ and successively collapsing the components of $C$ such that $K_C+a_{i_1}p_{i_1}+\ldots +a_{i_r}p_{i_r}$ is not ample.

\begin{proposition}\label{prop_tropicalreduction&forgetful}
There is a natural \emph{tropical reduction map}
\begin{equation*}
\rho_{\calA,\calB}^{trop}\mathrel{\mathop:}\Mbar_{g,\calA}^{trop}\longrightarrow\Mbar_{g,\calB}^{trop}
\end{equation*} 
and a natural \emph{tropical forgetful map} 
\begin{equation*}
\phi_{\calA,\calA'}^{trop}\mathrel{\mathop:}\Mbar_{g,\calA}^{trop}\longrightarrow\Mbar_{g,\calA'}^{trop}
\end{equation*}
making the diagrams
\begin{equation*}
\begin{CD}
\Mbar_{g,\calA}^{an}@>\trop_{g,\calA}>>\Mbar_{g,\calA}^{trop}\\
@V\rho_{\calA,\calB}^{an}VV@VV\rho_{\calA,\calB}^{trop}V\\
\Mbar_{g,\calB}^{an}@>\trop_{g,\calB}>>\Mbar_{g,\calB}^{trop}
\end{CD}
\qquad\qquad
\begin{CD}
\Mbar_{g,\calA}^{an}@>\trop_{g,\calA}>>\Mbar_{g,\calA}^{trop}\\
@V\phi_{\calA,\calA'}^{an}VV@VV\phi_{\calA,\calA'}^{trop}V\\
\Mbar_{g,\calA'}^{an}@>\trop_{g,\calA'}>>\Mbar_{g,\calA'}^{trop}
\end{CD}
\end{equation*}
commutative.
\end{proposition}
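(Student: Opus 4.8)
The strategy is to define the tropical reduction and forgetful maps combinatorially on the level of weighted graphs, check that they descend to continuous maps on the tropical moduli spaces, and then verify commutativity of the two squares by tracing a point through both compositions using the modular description of $\trop_{g,\calA}$ from the proof of Theorem \ref{thm_modulartrop}. For the tropical reduction map $\rho_{\calA,\calB}^{trop}$, I would first describe, for a weighted graph $G$ that is stable of type $(g,\calA)$ with $b_i\le a_i$, the combinatorial operation mirroring Hassett's: one contracts exactly those edges of $G$ along which the $\calB$-twisted canonical divisor $K_{G,\calB}$ ceases to have positive coefficients, i.e. one passes to the stabilization of $(G,\calB)$. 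More precisely, a vertex $v$ with $2h(v)-2+|v|_E+|v|_\calB\le 0$ must be absorbed into a neighbor (and a genus-$0$, two-valent vertex carrying no legs of positive $\calB$-weight gets its two edges merged); iterating produces a well-defined $\calB$-stable graph $\rho_{\calA,\calB}(G)$ together with a surjection $E(G)\twoheadrightarrow E(\rho_{\calA,\calB}(G))$ onto the surviving edges. This induces a morphism of cones $\sigmabar_G\to\sigmabar_{\rho_{\calA,\calB}(G)}$ (a coordinate projection on the extended cones, sending contracted-edge coordinates to nothing and keeping the surviving lengths), compatible with weighted edge contractions and automorphisms, hence a morphism $\rho_{\calA,\calB}^{trop}$ of the colimits $\Mbar_{g,\calA}^{trop}\to\Mbar_{g,\calB}^{trop}$; continuity is automatic for a map of (extended) generalized cone complexes.

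For the tropical forgetful map $\phi_{\calA,\calA'}^{trop}$ with $\calA'\subseteq\calA$, the combinatorial recipe is analogous: delete the legs $l_i$ with $i\notin\calA'$, then stabilize the resulting weighted graph with respect to $\calA'$ exactly as above (absorbing vertices that have become unstable, merging two-valent genus-$0$ vertices with no remaining positive-weight legs, and deleting resulting tails whose far endpoint has genus $0$). Again one records the induced surjection on surviving edges, obtains maps $\sigmabar_G\to\sigmabar_{\phi_{\calA,\calA'}(G)}$ compatible with the morphisms in $\calG_{g,\calA}$, and passes to colimits. I would remark that both constructions are special cases of the same "stabilization" operation on $(g,\mathcal{C})$-stable graphs for a coarser weight datum $\mathcal{C}$, so it suffices to prove functoriality and commutativity once in that generality.

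To prove commutativity of the left-hand square I would take a point $x\in\Mbar_{g,\calA}^{an}$, represent it (after finite base change) by $\calC_x\to\Spec R$, a $(g,\calA)$-stable curve with special fiber having dual graph $G_x$ and node parameters $f_e\in R$, so that $\trop_{g,\calA}(x)$ is the class of $(G_x,\ell)$ with $\ell(e)=\val(f_e)$. Applying Hassett's reduction morphism $\rho_{\calA,\calB}$ to this family contracts precisely the components of the special fiber destabilized by $K+\sum b_ip_i$; the key point — which I expect to be the main obstacle — is to check that the dual graph of the special fiber of $\rho_{\calA,\calB}(\calC_x)$ is exactly the combinatorial stabilization $\rho_{\calA,\calB}(G_x)$ and that the node parameters of the reduced family are (up to units, hence up to valuation) the original $f_e$ for the surviving nodes $e$, with contracted nodes disappearing. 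This requires understanding how contracting a $\PP^1$-component (or a chain of them) in a family over a DVR affects the local equations $xy=f_e$ at the remaining nodes: contracting a rational bridge between two nodes $xy=f_{e_1}$ and $uv=f_{e_2}$ produces a single node with parameter $f_{e_1}f_{e_2}$, matching the "edge-merging adds lengths" rule built into the cone map $\sigmabar_{G_x}\to\sigmabar_{\rho_{\calA,\calB}(G_x)}$; contracting a rational tail simply deletes that node. Granting this local analysis, the composite $\trop_{g,\calB}\circ\rho_{\calA,\calB}^{an}$ sends $x$ to the class of $(\rho_{\calA,\calB}(G_x),\bar\ell)$ with the surviving lengths summed along merged edges, which is by construction $\rho_{\calA,\calB}^{trop}\bigl(\trop_{g,\calA}(x)\bigr)$. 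The right-hand square is handled identically, using Hassett's forgetful morphism $\phi_{\calA,\calA'}$ and the same local picture for contracting rational tails and bridges created by deleting the forgotten sections. Since all four maps are continuous and the tropicalization maps are surjective onto dense subsets of strata, it in fact suffices to verify the equalities on the divisorial/maximal-rank points, which is exactly where the above family-level computation applies; this reduces the verification to the finitely many combinatorial cases of contracting a single vertex and merging/deleting its incident edges.
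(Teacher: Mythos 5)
Your combinatorial construction of $\psi_{\calA,\calB}^{\calG}$ and the induced cone maps is essentially identical to the paper's: the paper also describes the stabilization as an iteration of two local moves (contract the unique edge at a destabilized genus-$0$ vertex of valence one and transfer its legs; merge the two edges at a destabilized genus-$0$ vertex of valence two), checks compatibility with automorphisms and weighted contractions, and passes to the colimit. Where you genuinely diverge is the proof of commutativity of the two squares. The paper does \emph{not} trace families: it observes that the algebraic morphism $\psi_{\calA,\calB}$ induces a functor $\calH_{\calMbar_{g,\calA}}\to\calH_{\calMbar_{g,\calB}}$ compatible, under the equivalences of Lemma \ref{lemma_pi1=Aut}, with $\psi_{\calA,\calB}^{\calG}$, and then invokes Theorem \ref{thm_modulartrop} to identify $\trop_{g,\calA}$ with the retraction $\mathbf{p}$ onto the skeleton, so that commutativity follows from functoriality of the skeleton construction. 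Your route --- representing $x$ by a stable curve over a valuation ring, applying Hassett's reduction/forgetful morphism to the family, and computing the effect on dual graphs and node parameters --- is the argument of \cite[Section 8.2]{AbramovichCaporasoPayne_tropicalmoduli}, which the paper explicitly cites as the alternative source for this proposition. Your approach buys an explicit justification of the ``lengths add along merged edges'' rule via the local computation $xy=f_{e_1}$, $uv=f_{e_2}\rightsquigarrow$ node with parameter $f_{e_1}f_{e_2}$ (a point the paper's phrase ``projection maps $\sigmabar_G\to\sigmabar_{G^\ast}$'' leaves implicit), at the cost of having to carry out that local analysis for chains of contracted components; the paper's argument is shorter because it piggybacks on the already-established skeleton isomorphism. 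Two small cautions: you should state the local contraction computation as a lemma rather than ``granting'' it, since it is the crux of your version; and the closing density reduction is dispensable --- the family-level computation applies verbatim to every point of $\Mbar_{g,\calA}^{an}$, so you need not restrict to divisorial points.
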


Proposition \ref{prop_tropicalreduction&forgetful} is an immediate consequence of Hassett's description of the forgetful and reduction morphisms for $\calMbar_{g,\calA}$ in \cite[Section 4.1]{Hassett_weightedmoduli} as well as the reasoning in \cite[Section 8.2]{AbramovichCaporasoPayne_tropicalmoduli}. We provide a proof in our language for the convenience of the reader. 

\begin{proof}[Proof of Propostion \ref{prop_tropicalreduction&forgetful}]
We shall prove both statements simultaneously using the notation $\psi_{\calA,\calB}$ for both the reduction morphism and the forgetful morphism. To make this notation consistent we follow \cite[Section 4.1]{Hassett_weightedmoduli} and formally set $\calB=\calA'\cup\{0,\ldots,0\}$ as well as:
\begin{itemize}
\item $\calMbar_{g,\calB}=\calMbar_{g,\calA'}$
\item $\Mbar_{g,\calB}^{trop}=\Mbar_{g,\calA'}^{trop}$
\item $\calG_{g,\calB}=\calG_{g,\calA'}$
\item $\Sigmabar_{g,\calB}=\Sigmabar_{g,\calA'}$
\end{itemize}
 
Our approach is to define natural functors 
\begin{equation*}
\psi_{\calA,\calB}^{\calG}\mathrel{\mathop:}\calG_{g,\calA}\rightarrow\calG_{g,\calB}
\end{equation*}
and 
\begin{equation*}
 \psi_{\calA,\calB}^{\Sigmabar}\mathrel{\mathop:}\Sigmabar_{g,\calA}\rightarrow\Sigmabar_{g,\calB}\end{equation*}
that will induce $\psi_{\calA,\calB}^{trop}$ by the universal property of colimits.

Let $G$ be a $\calA$-stable weighted graph. If $G$ is not $\calB$-stable we find ourselves in one of the following two situations:
\begin{enumerate}[(i)]
\item There is a vertex $v\in V(G^\ast)$ such that $h(v)=0$, $\vert v\vert_{E}=1$, and 
\begin{equation*}
2h(v)-2+\vert v\vert_E+\vert v\vert_{\calB}=\vert v\vert_{\calB} -1\leq 0 \ . 
\end{equation*}
In this case we contract the unique edge $e$ incident to $v$ and attach all the legs of $G^\ast$ incident to $v$ to the vertex on the other end of $e$. 
\item There is a vertex $v\in V(G^\ast)$ such that $h(v)=0$, $\vert v\vert_E=2$, and 
\begin{equation*}
2h(v)-2+\vert v\vert_E+\vert v\vert_{\calB}=\vert v\vert_{\calB} = 0 \ . 
\end{equation*}
In this case the graph $G$ does not have any legs of positive weight incident to $v$ and we replace the two adjacent edges $e_1$ and $e_2$ by one edge connecting the two vertices $v_1$ and $v_2$ on the other end of $v$.
\end{enumerate}

Applying the algorithm described in (i) and (ii) possibly multiple times and deleting legs of zero weight we obtain a functor  
\begin{equation*}\begin{split}
\psi_{\calA,\calB}^\calG\mathrel{\mathop:}\calG_{g,\calA}&\longrightarrow\calG_{g,\calB} \\
G&\longmapsto G^\ast \ ,
\end{split}\end{equation*}
since automorphisms of $G$ induce automorphisms of $G^\ast$ and weighted edge contractions of $G$ naturally induced weighted edge contractions of $G^\ast$.
Moreover, the projection maps $\sigmabar_G\rightarrow\sigmabar_{G^\ast}$ induce a functor $\psi_{\calA,\calB}^{\Sigmabar}\mathrel{\mathop:}\Sigmabar_{g,\calA} \rightarrow\Sigmabar_{g,\calA}$ making the diagram
\begin{equation*}\begin{CD}
\calG_{g,\calA}@>\Sigmabar_{g,\calA}>>\Sigmabar_{g,\calA}\\
@V\psi_{\calA,\calA'}^\calG VV @VV\psi_{\calA,\calA'}^{\Sigmabar} V\\
\calG_{g,\calA'}@>\Sigmabar_{g,\calA'}>>\Sigmabar_{g,\calA'}
\end{CD}\end{equation*}
commutative.

The map $\psi_{\calA,\calB}^{trop}$ is defined to be the map $\Mbar_{g,\calA}^{trop}\rightarrow\Mbar_{g,\calB}^{trop}$ induced by $\psi_{\calA,\calB}^{\calG}$ and $\psi_{\calA,\calB}^{\Sigmabar}$ using the universal property of colimits.

Now note that the morphism $\psi_{\calA,\calB}$ induces a functor $\psi_{\calA,\calB}^\calH\mathrel{\mathop:}\calH_{\calMbar_{g,\calA}}\rightarrow\calH_{\calMbar_{g,\calB}}$ making the natural diagram
\begin{equation*}\begin{CD}
\calH_{\calMbar_{g,\calA}} @<\simeq<< (\calG_{g,\calA})^{op}\\
@V\psi_{\calA,\calA'}^\calH VV @VV\psi_{\calA,\calA'}^\calG V\\
\calH_{\calMbar_{g,\calB}} @<\simeq<< (\calG_{g,\calB})^{op}
\end{CD}\end{equation*}
commutative. Because of that and Theorem \ref{thm_modulartrop} the diagrams in the statement of Theorem \ref{prop_tropicalreduction&forgetful} commute.
\end{proof}

\begin{proposition}\label{prop_reduction=subcomplex}
The tropical reduction morphism $\phi_{\calA,\calB}^{trop}$ has a section identifying the moduli space $\Mbar_{g,\calB}^{trop}$ with the subcomplex of $\Mbar_{g,\calA}^{trop}$ given by removing those extended relatively open cones $\sigmabar^\circ_G$ such that $G$ is not $\calB$-stable.
\end{proposition}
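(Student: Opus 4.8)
The plan is to produce the section as an explicit morphism of generalized extended cone complexes and then check that it splits $\psi_{\calA,\calB}^{trop}$. Recall from the proof of Proposition \ref{prop_tropicalreduction&forgetful} that $\psi_{\calA,\calB}^{trop}$ is induced by a functor $\psi_{\calA,\calB}^{\calG}\colon\calG_{g,\calA}\to\calG_{g,\calB}$ together with the projection maps $\sigmabar_G\to\sigmabar_{G^\ast}$. The first thing I would do is identify the natural candidate for the section at the level of graphs: a $\calB$-stable weighted graph $G$ of genus $g$ with $n$ legs (with the understanding that in the forgetful case we re-attach zero-weight legs) is in particular $\calA$-stable, since $a_i\geq b_i$ forces $\vert v\vert_\calA\geq\vert v\vert_\calB$ and hence $2h(v)-2+\vert v\vert_E+\vert v\vert_\calA>0$ at every vertex. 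This gives a fully faithful inclusion of categories $\iota\colon\calG_{g,\calB}\hookrightarrow\calG_{g,\calA}$, compatible with automorphisms and weighted edge contractions, and the identity maps $\sigmabar_G\to\sigmabar_{\iota(G)}$ assemble into a functor $\Sigmabar_{g,\calB}\to\Sigmabar_{g,\calA}$ lying over $\iota$.

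Next I would invoke the universal property of the colimit defining $\Mbar_{g,\calB}^{trop}$ to obtain a map $s_{\calA,\calB}\colon\Mbar_{g,\calB}^{trop}\to\Mbar_{g,\calA}^{trop}$ of extended generalized cone complexes, and then verify $\psi_{\calA,\calB}^{trop}\circ s_{\calA,\calB}=\mathrm{id}$. This is a cone-by-cone check: for a $\calB$-stable $G$, one has $\iota(G)^\ast = G$ — applying the contraction/edge-replacement algorithm of steps (i) and (ii) to an already $\calB$-stable graph does nothing — and under this identification the composite $\sigmabar_G\to\sigmabar_{\iota(G)}\to\sigmabar_{\iota(G)^\ast}=\sigmabar_G$ is the identity, since the first arrow is an identity and the second is the projection away from the (empty) set of collapsed edges. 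Functoriality then propagates this equality to the colimit.

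Finally I would identify the image of $s_{\calA,\calB}$ with the claimed subcomplex. The image of $s_{\calA,\calB}$ is the union of the cones $\sigmabar_G/\Aut(G)$ for $G$ ranging over $\calB$-stable graphs; by the decomposition $\Mbar_{g,\calA}^{trop}=\bigsqcup_{G\in\calG_{g,\calA}}\sigmabar_G^\circ/\Aut(G)$ of Proposition (the disjoint-union proposition), this union is precisely the complement of $\bigsqcup_{G\text{ not }\calB\text{-stable}}\sigmabar_G^\circ/\Aut(G)$, which is the subcomplex obtained by deleting the relatively open cones indexed by non-$\calB$-stable $G$. Here one should note that $\calB$-stability is closed under weighted edge contraction (already observed in Section \ref{section_forgetful&reduction} and echoing the remark after the definition of weighted graph contraction), so this locus is genuinely a closed subcomplex, which is what makes "the subcomplex given by removing those relatively open cones" well defined.

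I expect the only real subtlety to be bookkeeping in the forgetful case, where $\calB=\calA'\cup\{0,\dots,0\}$ and one must be careful that the inclusion $\iota$ and the identification $\iota(G)^\ast=G$ respect the convention (set up in the proof of Proposition \ref{prop_tropicalreduction&forgetful}) that zero-weight legs are deleted; checking that "deleting then re-attaching" is the identity on the relevant category is routine but needs to be spelled out. Everything else is a formal consequence of the universal property of colimits and the stratification of $\Mbar_{g,\calA}^{trop}$.
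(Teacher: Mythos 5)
Your proposal is correct and follows essentially the same route as the paper's (much terser) proof: the two key facts in both are that $\calB$-stability implies $\calA$-stability with $\psi_{\calA,\calB}^{\calG}$ acting as the identity on $\calB$-stable graphs, and that $\calB$-stability is preserved under weighted contraction so the removed relatively open cones form an open locus whose complement is a genuine subcomplex. Your version merely makes explicit the colimit bookkeeping and the forgetful-case conventions that the paper leaves implicit.
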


\begin{proof}
Suppose that $G$ is weighted graph that is stable of type $(g,\calA)$ but not of type $(g,\calB)$. Then all other weighted graphs $G'$ that can be contracted to $G$ are also not stable of type $(g,\calB)$. On the other hand all graphs $G$ that are stable of type $(g,\calB)$ are also stable of type $(g,\calA)$ and their reduction $\rho_{\calA,\calB}(G)$ is equal to $G$ itself. 
\end{proof}

\begin{remark}
Similar sections exists for the forgetful morphism in both the algebraic and the tropical world (see \cite[Proposition 8.2.4]{AbramovichCaporasoPayne_tropicalmoduli} for the case $\calA=(1,\ldots,1)$). Unlike this case, the section of the reduction morphism constructed in Proposition \ref{prop_reduction=subcomplex} does not have an algebraic analogue. One can, however, define a continuous section of $\phi_{\calA,\calB}$ on the level of underlying topological spaces. 
\end{remark}

\subsection{Clutching and gluing}\label{section_tropicalclutching&gluing}

Let $(g,\calA)$ be a fixed input datum and $G$ be a weighted graph that is stable of type $(g,\calA)$. Recall from Section \ref{section_algebraicclutching&gluing} that for a vertex $v$ of $G$ we denote by $\calA(v)$ the tuple 
\begin{equation*}
(a_{i_1},\ldots a_{i_k},1,\ldots,1)
\end{equation*}
consisting of those $a_i$ that correspond to legs $l_i$ emanating from $v$ and a $1$ for every flag of an edge incident to $v$.

\begin{definition}\label{def_tropicalclutching&gluing}
In analogy with the algebraic situation in Section \ref{section_algebraicclutching&gluing} we define the \emph{tropical clutching and gluing map} 
\begin{equation*}
\phi_G^{trop}\mathrel{\mathop:}\prod_{v\in V(G)}\Mbar_{h(v),\calA(v)}^{trop}\longrightarrow \Mbar_{g,\calA}^{trop}
\end{equation*}
given by connecting two legs of tropical curves $[\Gamma_v]\in \Mbar_{h(v),\calA(v)}^{trop}$ by a bridge at infinity whenever the corresponding flags in $G$ are connected by an edge.
\end{definition}

\begin{proposition}\label{prop_tropicalclutching&gluing}
The natural diagram
\begin{equation*}\begin{CD} 
\prod_{v\in V(G)}\Mbar_{h(v),\calA(v)}^{an}@>\prod\trop_{h(v),\calA(v)}>> \prod_{v\in V(G)}\Mbar_{h(v),\calA(v)}^{trop}\\
@V\phi_G^{an}VV @VV\phi_G^{trop}V\\
\Mbar_{g,\calA}^{an}@>\trop_{g,\calA}>>\Mbar_{g,\calA}^{trop}
\end{CD}\end{equation*}
is commutative
\end{proposition}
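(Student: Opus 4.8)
The plan is to mimic the proof of Proposition \ref{prop_tropicalreduction&forgetful}: reduce the commutativity of the diagram to a purely combinatorial statement about the clutching and gluing morphism $\phi_G$ on the level of dual graphs, and then invoke Theorem \ref{thm_modulartrop} to transfer it to the tropical side. First I would fix a point $\big([\Gamma_{v_1}],\ldots,[\Gamma_{v_m}]\big)$ in $\prod_{v\in V(G)}\Mbar_{h(v),\calA(v)}^{an}$, represented by morphisms $\Spec R_v\to\calMbar_{h(v),\calA(v)}$ for suitable valuation rings $R_v$, giving $\big(h(v),\calA(v)\big)$-stable curves $\calC^v\to\Spec R_v$. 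After passing to a common valuation ring $R$ extending all the $R_v$ — which is harmless since we only care about the image in $\Mbar_{g,\calA}^{trop}$ and lengths/valuations are unaffected — the algebraic clutching and gluing morphism $\phi_G^{an}$ (induced by $\phi_G$ of Proposition \ref{prop_clutching&gluing}) produces the $(g,\calA)$-stable curve $\calC\to\Spec R$ obtained by gluing the $\calC^v$ along the pairs of sections prescribed by the edges of $G$.

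The key step is then to identify the dual graph of the special fiber $\calC_s$ and the formal equations at its nodes. By construction, the nodes of $\calC_s$ fall into two classes: the "old" nodes, one for each node of each $\calC^v_s$, whose local equations $xy=f_e$ are unchanged by the gluing (so $\val(f_e)$ is the length of the corresponding edge of $\Gamma_{v}$); and the "new" nodes, one for each edge $e$ of $G$, obtained by gluing two smooth marked points of weight one, hence locally of the form $xy=0$, i.e. with $\val=\infty$. Consequently the dual graph $G_{\calC_s}$ is exactly the graph obtained by plugging the dual graph of each $\calC^v_s$ into the vertex $v$ of $G$, and the edge lengths of $\Gamma_{\calC_s}$ are the lengths of the $\Gamma_{v}$ together with an edge of length $\infty$ for each edge of $G$. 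But this is precisely the description of $\phi_G^{trop}$ in Definition \ref{def_tropicalclutching&gluing}: connecting the $[\Gamma_v]$ by a bridge at infinity for each edge of $G$. Hence $\trop_{g,\calA}\big(\phi_G^{an}(\cdot)\big)=\phi_G^{trop}\big(\prod\trop_{h(v),\calA(v)}(\cdot)\big)$, which is the asserted commutativity. Alternatively, one can package this as a compatibility of the functor $\phi_G$ with the equivalences $(\calG_{h(v),\calA(v)})^{op}\simeq\calH_{\calMbar_{h(v),\calA(v)}}$ and $(\calG_{g,\calA})^{op}\simeq\calH_{\calMbar_{g,\calA}}$ of Lemma \ref{lemma_pi1=Aut}, and then deduce the statement from Theorem \ref{thm_modulartrop} exactly as in the proof of Proposition \ref{prop_tropicalreduction&forgetful}; the two approaches are essentially the same.

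I expect the main obstacle to be purely bookkeeping: one must check that gluing two weight-one sections of the $\calC^v$ genuinely contributes a single node with trivial smoothing parameter and does not interact with the deformation of the old nodes — i.e. that Theorem \ref{thm_nodecoordinates} applied to $\calMbar_{g,\calA}$ at the point $p_x=\phi_G(p)$ gives coordinates $t_1,\ldots,t_N$ in which the $t_i$ attached to edges of $G$ map to $0\in R$ while the remaining $t_i$ attached to nodes of the $\calC^v$ map to the corresponding $f_e\in R_v\subseteq R$. This follows from Hassett's deformation-theoretic description of $\phi_G$ used in the proof of Proposition \ref{prop_MGstructure} (the clutching and gluing map takes a slice $\prod_v U_v$ isomorphically onto a branch of $U_G$), but stating it cleanly requires a little care. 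Everything else — the reduction to dual graphs, the passage to a common valuation ring, and the final appeal to Theorem \ref{thm_modulartrop} — is routine.
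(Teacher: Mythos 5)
Your argument is correct and is essentially the paper's own proof, just with more bookkeeping spelled out: the paper likewise takes families over a valuation ring, observes that $\phi_G$ glues two weight-one sections into a node persisting over all of $\Spec R$, so the local equation is $xy=0$ with valuation $\infty$, forcing the connecting edge to have infinite length as in Definition \ref{def_tropicalclutching&gluing}. The extra details you supply (passing to a common valuation ring, separating old and new nodes, the appeal to Theorem \ref{thm_nodecoordinates}) are sound refinements of the same idea rather than a different approach.
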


\begin{proof}
Let $(\calC^v,p_1^v,\ldots,p_{n_v}^v)$ be families of $\big(h(v),\calA(v)\big)$ stable curves over a valuation ring $R$ extending $k$ and denote the tropical curves associated to this data by $\Gamma_v$. Observe that the clutching and gluing map $\phi_G$ applied to the $(\calC^v,p_1^v,\ldots,p_{n_v}^v)$ exactly corresponds to connecting two legs of the $\Gamma_v$, whenever they correspond to an edge in $G$. Since $\phi_G\big((\calC^v,p_1^v,\ldots,p_{n_v}^v)\big)$ has a node over all of $\Spec R$, whenever two marked points have been glued, the special fiber is given by $xy=0$ in formal coordinates and therefore the connecting edge in $\phi_G^{trop}\big((\Gamma_v)\big)$ has to be of infinite length. 
\end{proof}

As special cases of Definition \ref{def_tropicalclutching&gluing} we obtain the following two maps:
\begin{itemize}
\item The \emph{tropical clutching map} 
\begin{equation*}
\kappa^{trop}=\kappa_{g_1,\calA_1,g_2,\calA_2}^{trop}\mathrel{\mathop:} \Mbar_{g_1,\calA_1\cup\{1\}}^{trop}\times\Mbar_{g_2,\calA_2\cup\{1\}}^{trop}\longrightarrow \Mbar_{g,\calA}^{trop}
\end{equation*}
is given by sending a pair of extended tropical curves $\Gamma_1$ and $\Gamma_2$ in $\Mbar_{g_1,\calA_1\cup\{1\}}^{trop}$ and $\Mbar_{g_2,\calA_2\cup\{1\}}^{trop}$ respectively to the extended tropical curve $\Gamma$ that is obtained by connecting the two legs $l_{n_1+1}$ and $l_{n_2+1}$ at infinity. 
\item The \emph{tropical gluing map}
\begin{equation*}
\gamma^{trop}\mathrel{\mathop:}\Mbar^{trop}_{g-1,\calA\cup\{1,1\}}\longrightarrow\Mbar^{trop}_{g,\calA}
\end{equation*}
is defined by sending an extended tropical curve $\Gamma$ in $\Mbar_{g-1,\calA\cup\{1,1\}}$ to the tropical curve $\widetilde{\Gamma}$ obtained by connecting the two legs $l_{n+1}$ and $l_{n+2}$ at infinity. 
\end{itemize}
Using this notation Proposition \ref{prop_tropicalclutching&gluing} yields the following generalization of \cite[Theorem 1.2.2]{AbramovichCaporasoPayne_tropicalmoduli}:

\begin{corollary}\label{cor_tropicalclutching&gluing}
The natural diagrams
\begin{equation*}\begin{CD}
 \Mbar_{g_1,\calA_1\cup\{1\}}^{an}\times\Mbar_{g_2,\calA_2\cup\{1\}}^{an}@>\trop_{g_1,\calA_1\cup\{1\}}\times\trop_{g_2,\calA_2\cup\{1\}}>>\Mbar_{g_1,\calA_1\cup\{1\}}^{trop}\times\Mbar_{g_2,\calA_2\cup\{1\}}^{trop}\\
 @V\kappa^{an}VV @VV k^{trop}V\\
  \Mbar_{g,\calA}^{an} @>\trop_{g,\calA}>> \Mbar_{g,\calA}^{trop}
\end{CD}\end{equation*}
and
\begin{equation*}\begin{CD}
\Mbar_{g-1,\calA\cup\{1,1\}}^{an}@>\trop_{g-1,\calA\cup\{1,1\}}>>\Mbar_{g-1,\calA\cup\{1,1\}}^{trop}\\
@V\gamma^{an}VV @VV\gamma^{trop}V\\
\Mbar_{g,\calA}^{an}@>\trop_{g,\calA}>>\Mbar_{g,\calA}^{trop}
\end{CD}\end{equation*}
are commutative.
\end{corollary}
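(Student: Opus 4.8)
The plan is to obtain Corollary \ref{cor_tropicalclutching&gluing} as a direct specialization of Proposition \ref{prop_tropicalclutching&gluing}, exactly mirroring how Corollary \ref{cor_clutching&gluing} was deduced from Proposition \ref{prop_clutching&gluing} in the algebraic setting. First I would recall that in the proof of Corollary \ref{cor_clutching&gluing} two specific weighted graphs $G$ were singled out: for the clutching map, $G$ consists of two vertices $v_1,v_2$ of genera $g_1,g_2$ joined by a single edge, with the legs of $\calA_1$ attached to $v_1$ and those of $\calA_2$ attached to $v_2$; for the gluing map, $G$ is the single vertex of genus $g-1$ carrying all $n$ legs together with one loop. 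In both cases one checks immediately that $\calA(v_1)=\calA_1\cup\{1\}$ and $\calA(v_2)=\calA_2\cup\{1\}$ in the first case, and $\calA(v)=\calA\cup\{1,1\}$ in the second, so that the product $\prod_{v\in V(G)}\Mbar_{h(v),\calA(v)}^{trop}$ appearing in Definition \ref{def_tropicalclutching&gluing} is precisely the source of $\kappa^{trop}$ (respectively $\gamma^{trop}$).

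Next I would observe that, by construction, $\phi_G^{trop}$ for these two graphs connects the relevant pair of legs by a bridge at infinity, which is exactly the description of $\kappa^{trop}$ and $\gamma^{trop}$ given after Definition \ref{def_tropicalclutching&gluing}; hence $\kappa^{trop}=\phi_G^{trop}$ and $\gamma^{trop}=\phi_G^{trop}$ for the respective $G$. On the algebraic side, the same identifications $\kappa^{an}=\phi_G^{an}$ and $\gamma^{an}=\phi_G^{an}$ hold by Corollary \ref{cor_clutching&gluing} together with the fact that analytification is functorial. Plugging these identifications into the commutative square of Proposition \ref{prop_tropicalclutching&gluing} for the chosen graph $G$ then yields each of the two squares in the statement of Corollary \ref{cor_tropicalclutching&gluing} verbatim.

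The only genuinely non-routine point to be careful about is the bookkeeping of marked legs: when $G$ has two vertices joined by an edge, the half-edge at $v_1$ plays the role of the extra leg $l_{n_1+1}$ of $\Mbar_{g_1,\calA_1\cup\{1\}}^{trop}$ and the half-edge at $v_2$ the role of $l_{n_2+1}$, and one must check that the marking bijection on the glued tropical curve matches the convention $(l_1^1,\dots,l_{n_1}^1,l_1^2,\dots,l_{n_2}^2)$ used in Corollary \ref{cor_clutching&gluing}; the analogous check for the loop in the gluing case is that its two half-edges are labelled $l_{n+1}$ and $l_{n+2}$. This is precisely the same normalization already fixed in Section \ref{section_algebraicclutching&gluing}, so no new choices are involved, but it is the step where an inattentive argument could go wrong. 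I would therefore write the proof as: "both diagrams are the special case of the diagram in Proposition \ref{prop_tropicalclutching&gluing} obtained by taking $G$ to be the two-vertex graph (respectively the one-vertex graph with a loop) described in the proof of Corollary \ref{cor_clutching&gluing}, once one identifies $\kappa^{trop}$ and $\gamma^{trop}$ with the corresponding maps $\phi_G^{trop}$ and $\kappa^{an}$, $\gamma^{an}$ with $\phi_G^{an}$ via that corollary."

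\begin{proof}
Both diagrams are special cases of the commutative diagram in Proposition \ref{prop_tropicalclutching&gluing}. For the first diagram we take $G$ to be the weighted graph consisting of two vertices $v_1$ and $v_2$ of genera $g_1$ and $g_2$ connected by a single edge, with the legs corresponding to $\calA_1$ attached to $v_1$ and those corresponding to $\calA_2$ attached to $v_2$; then $\calA(v_1)=\calA_1\cup\{1\}$ and $\calA(v_2)=\calA_2\cup\{1\}$, so that
\begin{equation*}
\prod_{v\in V(G)}\Mbar_{h(v),\calA(v)}^{trop}=\Mbar_{g_1,\calA_1\cup\{1\}}^{trop}\times\Mbar_{g_2,\calA_2\cup\{1\}}^{trop} \ ,
\end{equation*}
and by construction $\phi_G^{trop}$ connects the leg $l_{n_1+1}$ of the first factor to the leg $l_{n_2+1}$ of the second factor by a bridge at infinity, i.e. $\phi_G^{trop}=\kappa^{trop}$. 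On the algebraic side, Corollary \ref{cor_clutching&gluing}(i) together with the functoriality of analytification gives $\phi_G^{an}=\kappa^{an}$. Substituting these identifications into the diagram of Proposition \ref{prop_tropicalclutching&gluing} yields the first diagram. For the second diagram we take $G$ to be the weighted graph with one vertex $v$ of genus $g-1$ carrying the $n$ legs corresponding to $\calA$ together with a single loop; then $\calA(v)=\calA\cup\{1,1\}$, the map $\phi_G^{trop}$ connects the two legs $l_{n+1}$ and $l_{n+2}$ at infinity, so $\phi_G^{trop}=\gamma^{trop}$, and $\phi_G^{an}=\gamma^{an}$ by Corollary \ref{cor_clutching&gluing}(ii) and functoriality of analytification. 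Substituting into Proposition \ref{prop_tropicalclutching&gluing} yields the second diagram.
\end{proof}
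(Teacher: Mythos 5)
Your proof is correct and is exactly the route the paper takes: the corollary is stated there as an immediate specialization of Proposition \ref{prop_tropicalclutching&gluing} to the two-vertex graph with one edge and the one-vertex graph with a loop, just as Corollary \ref{cor_clutching&gluing} specializes Proposition \ref{prop_clutching&gluing}. Your additional care with the leg-labelling conventions is a reasonable elaboration of what the paper leaves implicit.
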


\section{Variations of weight data}\label{section_wallsandchambers}

\subsection{Chamber decompositions} 

In this section we compare how, given a fixed genus $g$, the two functions $\calA\mapsto\calMbar_{g,\calA}$ and $\calA\mapsto\Mbar_{g,\calA}^{trop}$ vary in $\calA$. Fix a genus $g\geq 0$ and a number $n\geq 0$. We denote by $\scrD_{g,n}$ the set of possible weight data 
\begin{equation*}
\scrD_{g,n}=\big\{(a_1,\ldots,a_n)\in(0,1]^n\cap\QQ^n\big\vert a_1+\ldots + a_n>2-2g \big\} \ .
\end{equation*}
As in \cite[Section 5]{Hassett_weightedmoduli} a \emph{chamber decomposition} $\scrW$ of $\scrD_{g,n}$ consists of a finite set of hyperplanes $w_S\subseteq\scrD_{g,n}$. We refer to the $w_S$ as the \emph{walls} of the chamber decomposition $\scrW$ and to connected components of the complement of the $w_S$ as the \emph{open chambers} of $\scrW$.

In \cite[Section 5]{Hassett_weightedmoduli} Hassett studies chamber decompositions of $\scrD_{g,n}$ with the property that the functions $\calA\mapsto\calMbar_{g,\calA}$ and $\calA\mapsto\calC_{g,\calA}$ are constant on every open chamber, where $\calC_{g,\calA}$ denotes the universal curve of $\calMbar_{g,\calA}$. 

\begin{proposition}\label{prop_algcham>=tropcham}
Suppose that $\scrW$ is a chamber decomposition of $\scrD_{g,n}$ such that $\calA\mapsto\calC_{g,\calA}$ is constant on open chambers. Then $\calA\mapsto\Mbar_{g,\calA}^{trop}$ is constant on the open chambers of $\scrW$ as well.
\end{proposition}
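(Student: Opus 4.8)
The plan is to show that if $\calC_{g,\calA}$ is constant on an open chamber, then the combinatorial input to the tropical construction is constant as well, so that $\Mbar_{g,\calA}^{trop}$ is constant on that chamber. By the construction in Section \ref{section_proof} — more precisely, by the definition of $\Mbar_{g,\calA}^{trop}$ as the colimit $\lim_{\longrightarrow}\sigmabar_G$ over $(\calG_{g,\calA})^{op}$ — the tropical moduli space depends on $\calA$ only through the category $\calG_{g,\calA}$ of $\calA$-stable weighted graphs together with its morphisms (contractions and automorphisms). Since the functor $\Sigmabar$ on objects $G\mapsto\Rbar_{\geq 0}^{E(G)}$ and the morphisms $i_\pi$, $\overline{i}_\pi$ do not see $\calA$ at all, it suffices to prove that the category $\calG_{g,\calA}$ itself is constant on each open chamber of $\scrW$.

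First I would recall Hassett's description of the walls: by \cite[Section 5]{Hassett_weightedmoduli} the relevant hyperplanes are of the form $w_S=\{\calA \mid \sum_{i\in S}a_i=1\}$ for subsets $S\subseteq\{1,\ldots,n\}$, together with the boundary hyperplanes of $\scrD_{g,n}$, and the condition that $\calA\mapsto\calC_{g,\calA}$ be constant on open chambers forces $\scrW$ to refine the chamber decomposition cut out by these hyperplanes (one also needs the hyperplanes $\sum_{i\in S}a_i=2-2g$ and coincidences coming from the universal curve). The key observation is then that whether a weighted graph $G$ of genus $g$ is stable of type $(g,\calA)$ is governed, at each vertex $v$, by the strict inequality $2h(v)-2+\vert v\vert_E+\vert v\vert_\calA>0$, i.e. by $\sum_{l_i\in L(v)}a_i>2-2h(v)-\vert v\vert_E$. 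The truth value of such an inequality can only change when $\calA$ crosses a hyperplane of the form $\sum_{i\in S}a_i=c$ for an integer $c$ and $S=\{i : l_i\in L(v)\}$; since $2-2h(v)-\vert v\vert_E$ ranges over a bounded set of integers (using $h(v)\geq 0$, $\vert v\vert_E\geq 0$, and the finiteness of relevant graphs), all such hyperplanes are among those that $\scrW$ refines. Hence on a fixed open chamber the set of $\calA$-stable weighted graphs of genus $g$ with $n$ legs is constant, and since automorphisms and contractions of graphs are defined without reference to $\calA$ (and a contraction of an $\calA$-stable graph is $\calA$-stable), the whole category $\calG_{g,\calA}$ is constant on the chamber.

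To finish, I would note that a constant diagram has a constant colimit: if $\calA,\calA'$ lie in the same open chamber then $\calG_{g,\calA}=\calG_{g,\calA'}$ as categories, the functors $\Sigmabar$ into the category of extended rational polyhedral cone complexes agree, and therefore
\begin{equation*}
\Mbar_{g,\calA}^{trop}=\lim_{\longrightarrow}\sigmabar_G=\Mbar_{g,\calA'}^{trop}
\end{equation*}
as extended generalized cone complexes. The main obstacle I anticipate is bookkeeping rather than conceptual: one must check carefully that the finitely many hyperplanes controlling $\calA$-stability of graphs (over all relevant vertices of all relevant graphs) really are contained in — or refined by — the chamber structure $\scrW$ for which $\calA\mapsto\calC_{g,\calA}$ is constant, i.e. that Hassett's wall-and-chamber setup for the universal curve is at least as fine as the one needed here. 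This should follow from inspecting \cite[Section 5]{Hassett_weightedmoduli}, since the loci where components of a stable curve get contracted are cut out by exactly these $\sum_{i\in S}a_i = \text{integer}$ conditions, but it is the one place where I would want to be scrupulous about which direction the refinement goes.
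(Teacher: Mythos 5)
Your proof is correct, but it takes a genuinely different route from the paper. The paper's argument is geometric and very short: an isomorphism of universal curves $\calC_{g,\calA}\simeq\calC_{g,\calA'}$ induces an isomorphism $\calMbar_{g,\calA}\simeq\calMbar_{g,\calA'}$ preserving the stratifications by dual graphs, and then Theorem \ref{thm_modulartrop} (the identification of $\Mbar_{g,\calA}^{trop}$ with the skeleton) transports this to an isomorphism of the tropical moduli spaces. You instead stay entirely on the combinatorial side and show that the diagram defining the colimit, namely the category $\calG_{g,\calA}$, is itself constant on each open chamber; this avoids Theorem \ref{thm_modulartrop} altogether, yields an equality rather than just an isomorphism of cone complexes, and in effect also establishes the easy half of Proposition \ref{prop_algfinecham=tropcoarsecham}. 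The one point you flag — that the hyperplanes $\sum_{i\in S}a_i=c$ controlling graph stability are refined by $\scrW$ — does close up cleanly: the vertex inequality $\vert v\vert_\calA>2-2h(v)-\vert v\vert_E$ is $\calA$-sensitive only when $2h(v)+\vert v\vert_E\leq 1$, i.e.\ for $(h(v),\vert v\vert_E)=(0,0)$ (which forces the one-vertex graph and reduces to the defining inequality of $\scrD_{g,n}$) or $(0,1)$ (which gives exactly a wall $\sum_{i\in S}a_i=1$ with $S$ in the range appearing in $\scrW_f$, the excluded ranges of $\vert S\vert$ corresponding to graphs that are never stable); since Hassett shows $\scrW_f$ is the \emph{coarsest} decomposition on which $\calA\mapsto\calC_{g,\calA}$ is constant, any $\scrW$ as in the hypothesis refines $\scrW_f$ and hence all the walls you need. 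The trade-off is that your argument requires this casework, while the paper's requires accepting the (only briefly justified) claim that isomorphic universal curves give a stratification-preserving isomorphism of the compactified moduli stacks.
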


\begin{proof}
If the two universal curves $\calC_{g,\calA}$ and $\calC_{g',\calA'}$ are isomorphic, there is an isomorphism between the moduli stacks $\calMbar_{g,\calA}$ and $\calMbar_{g',\calA'}$ that preserves the stratifications by dual graphs.  Using Theorem \ref{thm_modulartrop} we see that the tropical moduli spaces $\Mbar_{g,\calA}^{trop}$ and $\Mbar_{g',\calA'}^{trop}$ are isomorphic.
\end{proof}

Moreover Hassett considers the \emph{coarse chamber decomposition}, which is given by 
\begin{equation*}
\scrW_c=\Big\{\sum_{j\in S}a_j=1\big\vert S\subseteq\{1,\ldots, n\} \textrm{ and } 2<\vert S\vert \leq n-2\delta_{g,0}\Big\} \ ,
\end{equation*}
as well as the \emph{fine chamber decomposition}, which is given by 
\begin{equation*}
\scrW_f=\Big\{\sum_{j\in S}a_j=1\big\vert S\subseteq\{1,\ldots, n\} \textrm{ and } 2\leq\vert S\vert \leq n-2\delta_{g,0}\Big\} \ .
\end{equation*}
Hereby $\delta_{i,j}$ denotes the Kronecker delta 
\begin{equation*}
\delta_{i,j}=\left\{
  \begin{array}{l l}
   1 & \quad \text{if } i=j\\
   0 & \quad \text{else.}\\
  \end{array} \right .
\end{equation*}

In \cite[Proposition 5.1]{Hassett_weightedmoduli} Hassett shows that $\scrW_c$ is the coarsest chamber decompostion of $\scrD_{g,n}$ such that $\calA\mapsto\calMbar_{g,\calA}$ is constant on every open chamber and $\scrW_f$ is the coarsest chamber decomposition of $\scrD_{g,n}$ such that the map $\calA\mapsto\calC_{g,\calA}$ is constant. Therefore Proposition \ref{prop_algcham>=tropcham} immediately implies that the association $\calA\mapsto \Mbar_{g,\calA}^{trop}$ is constant on the fine chambers of $\scrD_{g,n}$. 

The following Proposition \ref{prop_algfinecham=tropcoarsecham} is a partial analogue of \cite[Proposition 5.1]{Hassett_weightedmoduli} in the tropical world. 

\begin{proposition}\label{prop_algfinecham=tropcoarsecham}
The fine chamber decomposition $\scrW_f$ is the coarsest chamber decomposition of $\scrD_{g,n}$ such that $\calA\mapsto\Mbar_{g,\calA}^{trop}$ is constant on every open chamber.
\end{proposition}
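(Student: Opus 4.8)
The plan is to combine Proposition \ref{prop_algcham>=tropcham} with the precise combinatorial description of $\scrW_f$. As already noted in the discussion preceding the statement, since $\calA\mapsto\calC_{g,\calA}$ is constant on the open chambers of $\scrW_f$ by \cite[Proposition 5.1]{Hassett_weightedmoduli}, Proposition \ref{prop_algcham>=tropcham} shows that $\calA\mapsto\Mbar_{g,\calA}^{trop}$ is constant on the open chambers of $\scrW_f$. It therefore only remains to prove that $\scrW_f$ is the \emph{coarsest} such decomposition, i.e.\ that every wall $w_S$ of $\scrW_f$ is a wall of any chamber decomposition $\scrW$ of $\scrD_{g,n}$ on whose open chambers $\calA\mapsto\Mbar_{g,\calA}^{trop}$ is constant.

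So I would fix such a $\scrW$ together with a subset $S\subseteq\{1,\ldots,n\}$ satisfying $2\leq\vert S\vert\leq n-2\delta_{g,0}$, and argue by contradiction assuming $w_S$ is not a wall of $\scrW$. Since a hyperplane is not covered by finitely many distinct hyperplanes, one may choose $\calA_0\in w_S\cap\scrD_{g,n}$ lying in the interior of $\scrD_{g,n}$, on no other wall of $\scrW_f$, and in an open chamber of $\scrW$; in particular all coordinates of $\calA_0$ can be taken strictly between $0$ and $1$. For a small rational $\epsilon>0$ I would then set $\calA=\calA_0+\epsilon\,\mathbf{1}_S$ and $\calB=\calA_0-\epsilon\,\mathbf{1}_S$, where $\mathbf{1}_S$ denotes the vector with entry $1$ in the coordinates indexed by $S$ and $0$ elsewhere. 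For $\epsilon$ small enough $\calA$ and $\calB$ lie in $\scrD_{g,n}$ and in the same open chamber of $\scrW$ as $\calA_0$, the segment between them meets no wall of $\scrW_f$ other than $w_S$, and $\sum_{j\in S}a_j>1>\sum_{j\in S}b_j$. Since moreover $b_i\leq a_i$ for all $i$, the tropical reduction morphism $\rho_{\calA,\calB}^{trop}\colon\Mbar_{g,\calA}^{trop}\to\Mbar_{g,\calB}^{trop}$ of Proposition \ref{prop_tropicalreduction&forgetful} is available.

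The heart of the argument is then to exhibit a weighted graph $G$ that is stable of type $(g,\calA)$ but not of type $(g,\calB)$. I would take $G$ with two vertices $v,w$ joined by a single edge, where $v$ has genus $0$ and carries exactly the legs $\{l_j:j\in S\}$ and $w$ has genus $g$ and carries exactly the remaining legs $\{l_j:j\notin S\}$. Then $b_1(G)=0$ and $g(G)=g$; the stability inequality at $v$ is $\vert v\vert_\calA-1=\sum_{j\in S}a_j-1$, which is positive for $\calA$ and non-positive for $\calB$, while the inequality at $w$ is $2g-1+\sum_{j\notin S}a_j>0$, which holds automatically when $g\geq 1$ and, when $g=0$, follows from $\sum_{j\notin S}a_j>1$ — itself a consequence of $\sum_{j\in S}a_j$ being close to $1$ and of $\calA_0\in\scrD_{0,n}$ forcing $\sum_j a_j>2$. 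Hence $G\in\calG_{g,\calA}\setminus\calG_{g,\calB}$. By Proposition \ref{prop_reduction=subcomplex} this means that $\Mbar_{g,\calB}^{trop}$ is identified with a \emph{proper} subcomplex of $\Mbar_{g,\calA}^{trop}$, obtained by deleting at least the relatively open cone $\sigmabar_G^\circ$ (equivalently, $\calG_{g,\calB}\subsetneq\calG_{g,\calA}$). Since the number of cones of a generalized extended cone complex is an isomorphism invariant and strictly decreases upon passing to a proper subcomplex, $\Mbar_{g,\calA}^{trop}$ and $\Mbar_{g,\calB}^{trop}$ cannot be isomorphic, contradicting the constancy of $\calA\mapsto\Mbar_{g,\calA}^{trop}$ on the open chamber of $\scrW$ containing both $\calA$ and $\calB$. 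Therefore $w_S$ is a wall of $\scrW$.

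The step I expect to be the main obstacle is the construction of the separating graph $G$ together with the verification that it remains $(g,\calA)$-stable while ceasing to be $(g,\calB)$-stable: this is exactly where the precise shape of the fine chamber decomposition — in particular the bound $\vert S\vert\leq n-2\delta_{g,0}$ — must be used, via the stability check at the vertex $w$ in the genus-$0$ case. The remaining ingredient, that a finite generalized cone complex is not isomorphic to a proper subcomplex of itself, is essentially a bookkeeping observation about the poset of cones.
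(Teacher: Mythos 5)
Your proposal is correct and follows essentially the same route as the paper: the paper also reduces to showing that the moduli space changes across each wall $w_S$ of $\scrW_f$ and exhibits exactly the same two-vertex graph (genus $0$ vertex carrying the legs in $S$, genus $g$ vertex carrying the rest, joined by one edge), checking that its stability flips as $\sum_{j\in S}a_j$ crosses $1$, with the same use of $\sum_i a_i>2$ in the genus-zero case. Your additional care in choosing $\calA_0$ on only the wall $w_S$ and in invoking Proposition \ref{prop_reduction=subcomplex} to rule out an isomorphism is a slightly more explicit version of what the paper leaves implicit.
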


\begin{proof}
The map $\calA\mapsto \Mbar_{g,\calA}^{trop}$ is constant on the fine chambers of $\scrD_{g,n}$. So it is enough to note that $\Mbar_{g,\calA}^{trop}$ changes whenever we cross a wall of the fine chamber decomposition $\scrW_f$.

Suppose first that $g\geq 1$. Let $S\subseteq\{1,\ldots, n\}$ with $2\leq\vert S\vert\leq n$. Consider the graph $G_S$ containing one edge between two vertices $v_0$ and $v_g$, one of weight $0$ and the other of weight $g$, and legs $l_i$ incident to $v_0$, whenever $i\in S$ and incident to $v_g$, whenever $i\notin S$.
\begin{center}\begin{tikzpicture}
\fill (0,0) circle (0.05 cm);
\fill (3,0) circle (0.05 cm);
      
\draw (0,0) -- (3,0);

\foreach \a in {120,150,180,210,240}
\draw (\a:0) -- (\a:2);

\foreach \a in {300,330,0,30,60}
\draw (3,0) -- ({3+2*cos(\a)},{2*sin(\a)});

\node at (0.2,0.5) {$0$};
\node at (0.2,-0.5) {$v_0$};
\node at (2.8,0.5) {$g$};
\node at (2.8,-0.5) {$v_g$};
\node at (-3,0) {$i\in S$};
\node at (6,0) {$i\notin S$};

\end{tikzpicture}\end{center}
If $\sum_{i\in S}a_i>1$, then $G_S$ is stable of type $(g,\calA)$, since $\vert v_0\vert=1+\sum_{i\in S}a_i>2$ and $h(v_g)\geq 1$. But if $\sum_{i\in S}a_i\leq 1$, the graph $G_S$ is not stable of type $(g,\calA)$. 

Consider now the case $g=0$. Let $S\subseteq\{1,\ldots,n\}$ with $2\leq\vert S\vert\leq n-2$ and consider again the same graph $G_S$ as above with two vertices of weight $0$ connected by an edge and legs incident to $v_i$ depending on whether they are in $S$ or not. 
\begin{center}\begin{tikzpicture}
\fill (0,0) circle (0.05 cm);
\fill (3,0) circle (0.05 cm);
      
\draw (0,0) -- (3,0);

\foreach \a in {120,150,180,210,240}
\draw (\a:0) -- (\a:2);

\foreach \a in {300,330,0,30,60}
\draw (3,0) -- ({3+2*cos(\a)},{2*sin(\a)});

\node at (0.2,0.5) {$0$};
\node at (2.8,0.5) {$0$};
\node at (-3,0) {$i\in S$};
\node at (6,0) {$i\notin S$};

\end{tikzpicture}\end{center}
Suppose that $\sum_{i\in S}a_i\leq 1$. Then $\sum_{i=1}^{n} a_i>2$ implies $\sum_{i\notin S}a_i>1$. So when crossing the wall $\sum_{i\in S}a_i=1$ without changing the $a_i$ with $i\notin S$ we obtain that $G_S$ is stable of type $(0,\calA)$, if $\sum_{i\in S}a_i>1$, and not, if $\sum_{i\in S}a_i\leq 1$. 
\end{proof}

\begin{remarks}\begin{enumerate}[(i)]
\item As noted in \cite[Remark 2.3]{AlexeevGuy_weightedstablemaps} there is a typographical error in the definitions of coarse and fine chamber decompositions in \cite[Section 5]{Hassett_weightedmoduli}. We fix this typo following the notation of \cite[Section 0.4]{BayerManin_weightedstablemaps}.
\item In \cite[Definition 2.8]{AlexeevGuy_weightedstablemaps} Alexeev and Guy propose an alternative to chamber decompositions: They associate a simplicial complex $\Delta_\calA$ to the weights $\calA$ that has a $\vert S\vert$-dimensional simplex for every subset $S\subseteq\{1,\ldots, n\}$ with $\sum_{i\in S}a_i\leq 1$. As seen in \cite[Section 4]{AlexeevGuy_weightedstablemaps} crossing a single wall from $\calA$ to $\calB$ with $\calB\geq \calA$ in $\scrW_{g,n}$ corresponds to adding a simplex to $\Delta_\calA$ in order to obtain $\Delta_\calB$.
\end{enumerate}\end{remarks}

\subsection{Kapranov's construction of $\Mbar_{0,n}$}

By Theorem \ref{prop_tropicalreduction&forgetful} we can realize $\Mbar_{g,n}$ as a composition of tropical reduction maps starting at a $\Mbar_{g,\calA}$ with a low weights $\calA$ and Proposition \ref{prop_reduction=subcomplex} ensures that while we are crossing a wall in $\scrW_f$ from lower weight data to bigger ones, we are only adding additional extended cones. 

In \cite[Section 6.1]{Hassett_weightedmoduli} Hassett identifies Kapranov's classical blow-up construction of $\Mbar_{0,n}$ (see \cite{Kapranov_ChowquotientsI} and \cite[Section 4.3]{Kapranov_Veronese&M0nbar}) with a sequence of reduction maps. The weights of this sequence are given by 
\begin{equation*}
\calA_{r,s}[n]=\Bigg(\underbrace{\frac{1}{n-r-1},\ldots,\frac{1}{n-r-1}}_{n-r-1 \textrm{ times}},\frac{s}{n-r-1},\underbrace{1,\ldots,1}_{r \textrm{ times}}\Bigg)
\end{equation*}
for $r=1,\ldots, n-3$ and $s=1,\ldots, n-r-2$. The sequence starts with $\Mbar_{0,\calA_{1,1}[n]}\simeq\PP^{n-3}$, at the $r$-th step the sequence of reduction maps is given by 
\begin{equation*}\begin{CD}
\Mbar_{0,\calA_{r,n-r-2}[n]}@>>> \ldots @>>>\Mbar_{0,\calA_{r,2}[n]}@>>>\Mbar_{0,\calA_{r,1}[n]} \ ,
\end{CD}\end{equation*}
and Kapranov has shown that at the last step $\Mbar_{0,\calA_{n-3,1}[n]}$ is isomorphic to $\Mbar_{0,n}$.

\begin{example}
The final weights in Hassett's interpretation of Kapranov's construction are given by
\begin{equation*}
\calA_{n-3,1}[n]=\Bigg(\frac{1}{2},\frac{1}{2},\frac{1}{2},\underbrace{1,\ldots,1}_{n-3 \textrm{ times}}\Bigg) \ .
\end{equation*} 
Let $n\geq 5$. As seen in Figure \ref{figure_counterexKapranov}, there is a rational weighted graph that is stable of type  $\big(0,(1,\ldots,1)\big)$ but not of type $\big(0,\calA_{n-3,1}[n]\big)$. Therefore the tropical moduli space $\Mbar_{0,n}^{trop}$ contains an extended cone corresponding to this graph, but $\Mbar_{0,\calA_{n-3,1}}^{trop}$ does not and thus these two spaces cannot be isomorphic. 
\end{example}

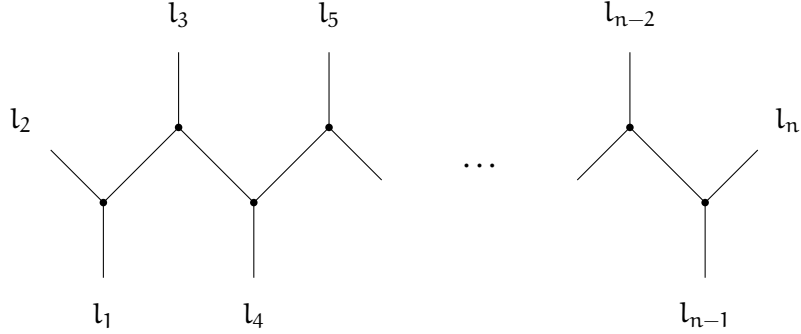
\begin{figure}

\begin{center}\begin{tikzpicture}
\fill (0,1) circle (0.05 cm);
\fill (-1,0) circle (0.05 cm);
\fill (1,0) circle (0.05 cm);
\fill (2,1,0) circle (0.05 cm);
\fill (7,0) circle (0.05 cm);
\fill (6,1) circle (0.05 cm);
      
\draw (-1,0) -- (0,1);
\draw (1,0) -- (0,1);
\draw (0,1) -- (0,2);
\draw (1,0) -- (1,-1);
\draw (-1,0) -- (-1,-1);
\draw (1,0) -- (2,1);
\draw (-1,0) -- (-1.7,0.7);
\draw (2,1) -- (2,2);
\draw (2,1) -- (2.7,0.3);
\draw (5.3,0.3) -- (6,1);
\draw (6,1) -- (6,2);
\draw (6,1) -- (7,0);
\draw (7,0) -- (7,-1);
\draw (7,0) -- (7.7,0.7);

\node at (0,2.5) {$l_3$};
\node at (2,2.5) {$l_5$};
\node at (1,-1.5) {$l_4$};
\node at (-1,-1.5) {$l_1$};
\node at (-2.1,1.1) {$l_2$};
\node at (6,2.5) {$l_{n-2}$};
\node at (7,-1.5) {$l_{n-1}$};
\node at (8.1,1.1) {$l_n$};

\node at (4,0.5) {$\ldots$};

\end{tikzpicture}\end{center}
\caption{A graph with $0$ vertex weights that is stable of type $\big(0,(1,\ldots,1)\big)$ but not of type $\big(0,\calA_{n-3,1}[n]\big)$, whenever $n\geq 5$.}\label{figure_counterexKapranov} \end{figure}

The explanation for this behavior is that, although we have an isomorphism 
\begin{equation*}
\Mbar_{0,\calA_{n-3,1}[n]}\simeq\Mbar_{0,n} \ ,
\end{equation*}
the universal curves $\calC_{0,\calA_{n-3,1}[n]}$ and $\calC_{0,n}$ of these two moduli spaces are not isomorphic. In other words $M_{0,\calA_{n-3,1}[n]}$, the locus parametrizing smooth curves in $\Mbar_{0,\calA_{n-3,1}[n]}$, is bigger than $M_{0,n}$. One can, of course, deal with this phenomenon by further increasing the weights and, by the following example, a minimal increase will be enough. 

\begin{example}
Let us now consider the weights
\begin{equation*}
\calA_\epsilon=\Bigg(\frac{1}{2}+\epsilon,\frac{1}{2}+\epsilon,\frac{1}{2}+\epsilon,\underbrace{1,\ldots,1}_{n-3 \textrm{ times}}\Bigg) \ .
\end{equation*}
for $0<\epsilon\leq\frac{1}{2}$. Every rational stable tropical curve is also stable of type $\calA_\epsilon$ and the tropical reduction map therefore induces a natural isomorphism
\begin{equation*}
\Mbar_{0,n}^{trop}\simeq\Mbar_{0,\calA_\epsilon}^{trop} \ .
\end{equation*}
This, together with the fact that for every rational $\calA_\epsilon$-stable $n$-marked curve none of the marked points are allowed to coincide, shows that there is also a natural isomorphism
\begin{equation*}
\calC_{0,n}\simeq\calC_{0,\calA_\epsilon}
\end{equation*}
between the universal curves.
\end{example}

\section{Losev-Manin spaces}\label{section_LosevManin}

Let $g=0$. We are now going to consider the special case that the weights $\calA=\{a_0,\ldots,a_{n+1}\}$ for $n\geq 1$ fulfill the two conditions:
\begin{enumerate}[(i)]
\item\label{itemi} $a_0+a_i > 1$ and $a_{n+1}+a_i>1$ for each $i\in\{1,\ldots,n\}$, and 
\item\label{itemii} $a_{i_1}+\ldots +a_{i_r}\leq 1$ for all $\{j_1,\ldots,j_r\}\subseteq\{1,\ldots,n\}$.
\end{enumerate}
We begin with the following easy observation:

\begin{lemma}
For $n\geq 2$ and $g=0$ there is a unique fine chamber in $\scrD_{0,n+2}$ determined by the conditions \eqref{itemi} and \eqref{itemii} above. 
\end{lemma}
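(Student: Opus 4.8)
The plan is to show that the two conditions \eqref{itemi} and \eqref{itemii} carve out exactly one open chamber of the fine chamber decomposition $\scrW_f$ of $\scrD_{0,n+2}$, and that this chamber is nonempty. First I would recall from \cite[Section 5]{Hassett_weightedmoduli} that the walls of $\scrW_f$ are the hyperplanes $\sum_{j\in S}a_j=1$ for $S\subseteq\{0,1,\ldots,n+1\}$ with $2\leq\vert S\vert\leq n$, so that an open chamber is determined by specifying, for each such $S$, whether $\sum_{j\in S}a_j<1$ or $\sum_{j\in S}a_j>1$. The claim is that conditions \eqref{itemi} and \eqref{itemii} already pin down the sign of $\sum_{j\in S}a_j-1$ for \emph{every} admissible $S$, and in a consistent way.

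The key step is the following case analysis on a subset $S$ with $2\leq\vert S\vert\leq n$. If $S\subseteq\{1,\ldots,n\}$, then $\vert S\vert\leq n$ is automatic and condition \eqref{itemii} forces $\sum_{j\in S}a_j\leq 1$; since we are in the open chamber this is the strict inequality $\sum_{j\in S}a_j<1$ (the boundary case being a wall). If $S$ contains exactly one of the two heavy indices $0$ or $n+1$, say $0\in S$ and $n+1\notin S$, then $S\supseteq\{0,i\}$ for some $i\in\{1,\ldots,n\}$, so $\sum_{j\in S}a_j\geq a_0+a_i>1$ by \eqref{itemi}; again strict since we are off the walls. Finally, if $\{0,n+1\}\subseteq S$, then $\vert S\vert\leq n$ forces $S$ to omit at least two indices of $\{1,\ldots,n\}$, hence $S^c\cap\{1,\ldots,n\}\neq\emptyset$ contains some $i$, and $\sum_{j\notin S}a_j\leq\sum_{j=1}^n a_j\cdot[\cdots]$ — more directly, $\sum_{j\in S}a_j=\big(\sum_{j=0}^{n+1}a_j\big)-\sum_{j\notin S}a_j>2-1=1$ using $\sum_{j=0}^{n+1}a_j>2$ (the defining inequality of $\scrD_{0,n+2}$, since $2g-2=-2$) together with the fact that $S^c\subseteq\{1,\ldots,n\}$ has size $\geq 2$ wait — size could be $1$ if $\vert S\vert=n+1$, but $\vert S\vert\leq n$ excludes that, so indeed $\vert S^c\vert\geq 2$ and $\sum_{j\notin S}a_j\leq 1$ by \eqref{itemii}. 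In every case the sign is determined, so at most one open chamber is compatible with \eqref{itemi} and \eqref{itemii}, and conversely any point satisfying these inequalities lies in that chamber.

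For nonemptiness — which is needed for the chamber to actually exist — I would exhibit an explicit weight vector, for instance $a_0=a_{n+1}=1$ and $a_1=\cdots=a_n=\tfrac1n$ (the Losev–Manin weights of Remark \ref{remark_nonNC}): then $a_0+a_i=1+\tfrac1n>1$, any sum $a_{i_1}+\cdots+a_{i_r}\leq r\cdot\tfrac1n\leq 1$, and $\sum a_j=2+1>2$, so all conditions hold and the point lies in $\scrD_{0,n+2}$. The hypothesis $n\geq 2$ enters precisely in the third case of the analysis above: it guarantees that a subset containing both heavy points and of size $\leq n$ must omit at least two light points, so that \eqref{itemii} applies to the complement; for $n=1$ there are no admissible walls at all and the statement would be vacuous in a different way. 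The main obstacle is simply bookkeeping — making sure the inequality $\vert S\vert\leq n$ is used correctly to rule out the degenerate complement sizes — rather than anything conceptually deep.
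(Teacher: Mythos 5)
Your proposal is correct and follows essentially the same case analysis as the paper (splitting on whether $S$ contains neither, exactly one, or both of the heavy indices $0$ and $n+1$, and exhibiting the Losev--Manin weights $\big(1,\tfrac1n,\ldots,\tfrac1n,1\big)$ for nonemptiness). The only difference is in the case $\{0,n+1\}\subseteq S$: you subtract $\sum_{j\notin S}a_j\leq 1$ from the total $\sum_j a_j>2$, whereas the paper derives $a_0+a_{n+1}>1$ directly from conditions \eqref{itemi} and \eqref{itemii}; both work, and your worry about $\vert S^c\vert\geq 2$ is unnecessary since \eqref{itemii} applies to every subset of $\{1,\ldots,n\}$.
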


\begin{proof}
We have to show that for every $S\subseteq\{0,\ldots, n+1\}$ with $2\leq\vert S\vert \leq n$ the above conditions imply that either $\sum_{i\in S}a_i\leq 1$ or $\sum_{i\in S}a_i>1$.

If $0\notin S$ and $n+1\in S$ or if $0\in S$ and $n+1\notin S$ Condition \eqref{itemi} immediately yields $\sum_{i\in S}Sa_i>1$, since $a_i\geq 0$ for all $i$. In the case that both $0\notin S$ and $n+1\notin S$ we have $\sum_{i\in S}a_i\leq 1$ by condition \eqref{itemii}. Now consider the case $S\supseteq\{0,n+1\}$. By Condition \eqref{itemi} we obtain 
\begin{equation*}
a_0+a_i+a_j+a_{n+1}>2
\end{equation*}
for some $1\leq i,j\leq n$. Since $n\geq 2$ we may assume $i< j$. It follows from Condition \eqref{itemii} that $a_i+a_j\leq 1$ and therefore we obtain $a_0+a_{n+1}>1$, which, in turn, implies
\begin{equation*}
\sum_{i\in S}a_i\geq a_0+a_{n+1}>1 \ ,
\end{equation*}
since $a_i\geq 0$ for all $i$. 

A tuple of weights that fulfills Conditions \eqref{itemi} and \eqref{itemii} is e.g. given by  
\begin{equation*}
\calA=\bigg(1,\frac{1}{n},\ldots,\frac{1}{n},1\bigg) 
\end{equation*}
and therefore this chamber is non-empty. 
\end{proof}

In \cite[Section 6.4]{Hassett_weightedmoduli} Hassett has identified the fine moduli spaces $\Mbar_{0,\calA}$ with the moduli spaces $L_n$, studied by Losev and Manin in \cite{LosevManin_newmoduli}, parametrizing chains $C$ of projective lines connecting the two end points $p_0$ and $p_{n+1}$ with $n$ additional marked points $p_1,\ldots, p_n$ such that
\begin{itemize}
\item the $p_1,\ldots,p_n$ are allowed to mutually coincide, but not to coincide with $p_0$, $p_{n+1}$, or the nodes, and
\item the normalization of every component of $C$ contains at least three special points.
\end{itemize}

By \cite{LosevManin_newmoduli} the moduli space $L_n$ is isomorphic to the smooth projective toric variety defined by the $(n-2)$-dimensional permutohedron $P_{n-1}$ as defined in \cite[Definition 1.3]{Kapranov_permutohedron} and the big torus $T=\G_m^{n-1}$ exactly parametrizes the locus of smooth curves in $L_n$. 

\begin{example}\label{example_L_3nonSNC}

Let us now consider the case $n=3$. We may choose coordinates $(p,q)$ on $T\simeq\G_m^2$ such that up to $\PP^1$-automorphism $(p_0,p_1,p_{4})=(0,1,\infty)$ and $(p,q)=(p_2,p_3)$ are free variables. In these coordinates the toric prime divisors are given by $p=0$, $p=\infty$, $q=0$, $q=\infty$, $p=q=0$, as well as $p=q=\infty$, and the toric boundary has normal crossings. The complement of $\Mbar_{0,5}$ in $L_3$, however, also contains the divisors $p=1$, $q=1$, and $p=q$, which all intersect at $(1,1)$. Therefore $L_3-\Mbar_{0,5}$ does not have normal crossings, as indicated in Remark \ref{remark_nonNC} above.

\begin{center}\begin{tikzpicture}

\draw (-0.5,1.5) -- (1.5,-0.5);
\draw (0,0.5) -- (0,6.5);
\draw (-0.5,6) -- (5.5,6);
\draw (4.5,6.5) -- (6.5,4.5);
\draw (6,5.5) -- (6,-0.5);
\draw (0.5,0) -- (6.5,0);

\draw[dashed] (-0.3,2) -- (6.3,2);
\draw[dashed] (2,-0.3) -- (2,6.3);
\draw[dashed] (0.3,0.3) -- (5.7,5.7);

\node at (3.5,6.3) {$q=\infty$};
\node at (4,-0.3) {$q=0$};
\node[rotate=270] at (6.3,3.5) {$p=\infty$};
\node[rotate=90] at (-0.3,4) {$p=0$};
\node[rotate=-45] at (6,6) {$p=q=\infty$};
\node[rotate=-45] at (0,0) {$p=q=0$};

\node[rotate=90] at (1.7,4) {$p=1$};
\node at (4,1.7) {$q=1$};
\node[rotate=45] at (4.3,3.7) {$p=q$};

\end{tikzpicture}\end{center}

\end{example}

Recall from \cite[Definition 1.3]{Kapranov_permutohedron} that the \emph{$(n-1)$-dimensional permutohedron} $P_n$ is the lattice polytope in $\R^n$ given as the convex hull of the points $\big(s(1),\ldots,s(n)\big)$, where $s$ runs through all elements in the symmetric group $S_n$. As explained in \cite[Definition 2.5.1]{LosevManin_newmoduli} the $l$-dimensional cones of its dual fan $\Delta_n$ are labelled by $(l+1)$-partitions of $\{1,\ldots,n\}$ and therefore naturally carries the structure of a moduli space $L_n^{trop}$ of stable \emph{rational tropical chain curves} with $n+2$ legs connecting the legs $l_0$ and $l_{n+1}$. Its canonical compactification $\overline{L}_n^{trop}$ parametrizes stable \emph{rational extened tropical chain curves} with $n+2$ legs connecting the legs $l_0$ and $l_{n+1}$.

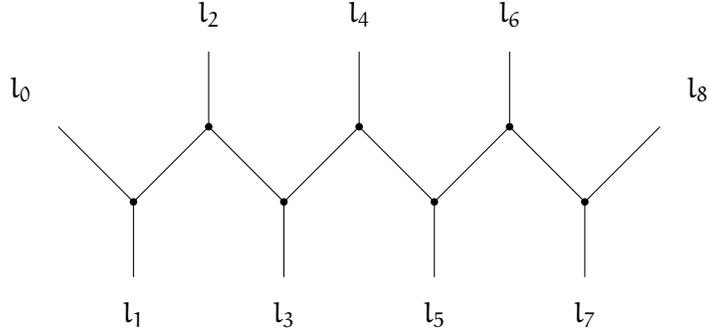
\begin{figure}
\centering\begin{tikzpicture}
\fill (0,1) circle (0.05 cm);
\fill (-1,0) circle (0.05 cm);
\fill (1,0) circle (0.05 cm);
\fill (3,0) circle (0.05 cm);
\fill (2,1) circle (0.05 cm);
\fill (4,1) circle (0.05 cm);
\fill (5,0) circle (0.05 cm);

\draw (-1,0) -- (0,1);
\draw (1,0) -- (0,1);
\draw (0,1) -- (0,2);
\draw (1,0) -- (1,-1);
\draw (-1,0) -- (-1,-1);
\draw (5,0) -- (6,1);
\draw (-1,0) -- (-2,1);
\draw (1,0) -- (2,1);
\draw (2,1) -- (3,0);
\draw (2,1) -- (2,2);
\draw (3,0) -- (3,-1);
\draw (3,0) -- (4,1);
\draw (4,1) -- (5,0);
\draw (4,1) -- (4,2);
\draw (5,0) -- (5,-1);

\node at (0,2.5) {$l_2$};
\node at (6.5,1.5) {$l_8$};
\node at (3,-1.5) {$l_5$};
\node at (-1,-1.5) {$l_1$};
\node at (-2.5,1.5) {$l_0$};
\node at (1,-1.5) {$l_3$};
\node at (2,2.5) {$l_4$};
\node at (4,2.5) {$l_6$};
\node at (5,-1.5) {$l_7$};
\end{tikzpicture}
\caption{A stable rational tropical chain curve with nine legs.}
\end{figure}

\begin{figure}
\centering
\begin{tikzpicture}

\foreach \a in {60,120,180,240,300,360}
\draw (0,0) -- (\a:2);

\foreach \r in {0.33,0.66,1,1.33,1.66}
\foreach \a in {60,120,180,240,300,360}
\draw (\a:\r) -- (\a+60:\r);

\node at (30:2.5) {$(1,2,3)$};
\node at (90:2.5) {$(2,1,3)$};
\node at (150:2.5) {$(2,3,1)$};
\node at (210:2.5) {$(3,2,1)$};
\node at (270:2.5) {$(3,1,2)$};
\node at (330:2.5) {$(1,3,2)$};

\end{tikzpicture}
\caption{The tropical Losev-Manin space $L_3^{trop}$. We indicate the corners of the permutohedron $P_{3}$ dual to $2$-dimensional cones.}
\end{figure}
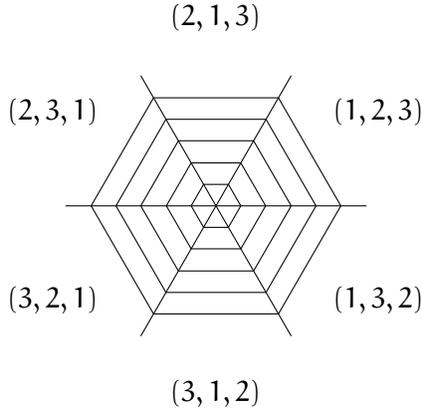

Moreover, there is a natural set-theoretic tropicalization map 
\begin{equation*}
\trop_n\mathrel{\mathop:}L_n^{an}\longrightarrow\overline{L}_n^{trop}
\end{equation*} 
defined analogously to $\trop_{g,\calA}$ in the introduction.

Now recall that Kajiwara \cite[Section 1]{Kajiwara_troptoric} and, independently, Payne \cite[Section 3]{Payne_anallimittrop} construct a tropicalization map 
\begin{equation*}
\trop_{\Delta}\mathrel{\mathop:}X(\Delta)^{an}\longrightarrow N_\R(\Delta)
\end{equation*}
into a partial compactification $N_\R(\Delta)$ of $N_\R$ for all toric varieties $X=X(\Delta)$ defined by a rational polyhedral fan $\Delta$ in $N_\mathbb{R}$, where $N$ denotes the cocharacter lattice of the big torus $T$ of $X$. On an $T$-invariant open affine subset $U=\Spec k[P]$ the partial compactifcation $N_\R(P)$ of $N_\R$ is given by $\Hom(P,\Rbar)$ and $\trop_P$ by
\begin{equation*} \begin{split}
\trop_P\mathrel{\mathop:}U^{an}&\longrightarrow N_\R(P)=\Hom(P,\Rbar)\\
x&\longmapsto \big(p\mapsto -\log\vert p\vert_x\big)
\end{split}\end{equation*}
We also refer the reader to \cite[Section 3]{Rabinoff_newtonpolygon} for further details on this construction.

\begin{corollary}
There is a natural homeomorphism $J_n\mathrel{\mathop:}\overline{L}_n^{trop}\xrightarrow{\sim}N_\R(\Delta_n)$ such that the diagram
\begin{center}\begin{tikzpicture}
  \matrix (m) [matrix of math nodes,row sep=2em,column sep=3em,minimum width=2em]
  {  
  &L_n^{an} & \\ 
 N_\R(\Delta_n)  & &  \overline{L}_n^{trop}  \\ 
  };
  \path[-stealth]
    (m-1-2) edge node [above left] {$\trop_{\Delta_n}$} (m-2-1)
    		edge node [above right] {$\trop_n$} (m-2-3)
    (m-2-3) edge node [below] {$J_n$} node [above] {$\sim$} (m-2-1);		
\end{tikzpicture}\end{center}
commutes. 
\end{corollary}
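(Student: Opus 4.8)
The plan is to identify both tropicalization maps with the skeleton retraction supplied by the machinery of Section~\ref{section_proof} and then invoke the toric case already in the literature. Concretely, by Hassett's identification \cite[Section 6.4]{Hassett_weightedmoduli} we have an isomorphism $\Mbar_{0,\calA}\simeq L_n$, and by \cite{LosevManin_newmoduli} the space $L_n$ is the smooth projective toric variety $X(\Delta_n)$ whose torus $T=\G_m^{n-1}$ is exactly the locus $M_{0,\calA}$ of smooth curves. Hence the toroidal embedding $\calM_{0,\calA}\hookrightarrow\calMbar_{0,\calA}$ corresponds to the torus embedding $T\hookrightarrow X(\Delta_n)$, which is simple (indeed strict) in the sense of Section~\ref{section_skeletonsimple}.

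First I would recall that for a toric variety the Thuillier skeleton $\frakS\big(X(\Delta_n)\big)$ is canonically identified with $N_\R(\Delta_n)$, and under this identification the retraction $\mathbf{p}$ agrees with the Kajiwara--Payne tropicalization map $\trop_{\Delta_n}$; this is precisely \cite[Theorem~1.2]{Ulirsch_functroplogsch} (or may be read off from the explicit description of $\mathbf{p}$ via $\val^\#$ in Section~\ref{section_skeletonsimple} together with the description of $\trop_P$ by $p\mapsto-\log|p|_x$ on each affine chart $U=\Spec k[P]$). Next, Theorem~\ref{thm_modulartrop} gives an isomorphism $J_{0,\calA}\mathrel{\mathop:}\Mbar_{0,\calA}^{trop}\xrightarrow{\sim}\frakS(\calMbar_{0,\calA})$ of extended generalized cone complexes with $\mathbf{p}=J_{0,\calA}\circ\trop_{0,\calA}$. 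Composing, I set $J_n$ to be the composite homeomorphism
\begin{equation*}
\overline{L}_n^{trop}=\Mbar_{0,\calA}^{trop}\xrightarrow{\ J_{0,\calA}\ }\frakS(\calMbar_{0,\calA})=\frakS\big(X(\Delta_n)\big)=N_\R(\Delta_n) \ .
\end{equation*}
Here I also need the identification $\overline{L}_n^{trop}=\Mbar_{0,\calA}^{trop}$, which follows from the fact that $\Sigmabar_{0,\calA}$ is exactly the extended cone complex whose cones are labelled by the partitions of $\{1,\ldots,n\}$ — i.e.\ the $\calA$-stable rational weighted graphs with $n+2$ legs are precisely the chain curves — and this is literally the combinatorial description of the dual fan $\Delta_n$ of the permutohedron $P_{n-1}$ given in \cite[Definition~2.5.1]{LosevManin_newmoduli}.

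It then remains to check that the triangle commutes, i.e.\ $\trop_{\Delta_n}=J_n\circ\trop_n$. Since $\trop_n$ is defined exactly in analogy with $\trop_{0,\calA}$ (via the valuative criterion, reading off edge lengths from the node equations $xy=f_e$), we have $\trop_n=\trop_{0,\calA}$ under $\overline{L}_n^{trop}=\Mbar_{0,\calA}^{trop}$; combined with $\mathbf{p}=J_{0,\calA}\circ\trop_{0,\calA}$ from Theorem~\ref{thm_modulartrop} and $\mathbf{p}=\trop_{\Delta_n}$ under $\frakS(X(\Delta_n))=N_\R(\Delta_n)$, the identity $\trop_{\Delta_n}=J_n\circ\trop_n$ is immediate. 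The one genuinely delicate point — the step I expect to require the most care — is the compatibility between the two a priori different descriptions of the skeleton retraction: the one from \cite{Thuillier_toroidal} used in Section~\ref{section_proof} (via Kato fans and $\val^\#$) and the Kajiwara--Payne map $\trop_{\Delta_n}$ (via $-\log|\cdot|$ on monomials). These agree by \cite[Theorem~1.2]{Ulirsch_functroplogsch}, but one should double-check the matching of signs and of the $\R_{\geq0}$ versus $\R_{\leq0}$ conventions on cones, and verify that the extended (i.e.\ $\Rbar$-valued) structures correspond, so that $J_n$ is a homeomorphism on the compactifications $\overline{L}_n^{trop}$ and $N_\R(\Delta_n)$ and not merely on their interiors.
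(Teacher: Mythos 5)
Your proposal is correct and follows essentially the same route as the paper: identify $\trop_{\Delta_n}$ with the skeleton retraction $\mathbf{p}$ via \cite[Theorem 1.2]{Ulirsch_functroplogsch} (the paper also cites \cite[Proposition 7.1]{Ulirsch_functroplogsch} for this toric comparison) and then conclude from Theorem \ref{thm_modulartrop}. The additional details you supply — the matching of $\overline{L}_n^{trop}$ with $\Mbar_{0,\calA}^{trop}$ via the chain-curve/permutohedron combinatorics and the sign/extension conventions — are left implicit in the paper but are consistent with its argument.
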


\begin{proof}
In view of \cite[Theorem 1.2]{Ulirsch_functroplogsch} and \cite[Proposition 7.1]{Ulirsch_functroplogsch} we can naturally identify $\trop_{\Delta_n}$ with the deformation retraction $\mathbf{p}$, since $L_n$ is proper over $k$ and thus the fan $\Delta_n$ is complete. Then the statement immediately follows from Theorem \ref{thm_modulartrop}. 
\end{proof}




\bibliographystyle{amsalpha}
\bibliography{biblio}{}

\end{document}